\theoremstyle{plain}
\newtheorem{lemma}{Lemma}[section]	
\newtheorem{theorem}[lemma]{Theorem}
\newtheorem{proposition}[lemma]{Proposition}
\newtheorem{corollary}[lemma]{Corollary}
\theoremstyle{definition}
\newtheorem{definition}[lemma]{Definition}
\theoremstyle{remark}
\newtheorem{remark}[lemma]{Remark}
\newtheorem{example}[lemma]{Example}
\def\N{\mathbb{N}}
\def\Z{\mathbb{Z}}
\def\R{\mathbb{R}}
\def\C{\mathbb{C}}
\def\proj{\mathbb{P}}
\renewcommand{\a}{\alpha}
\renewcommand{\b}{\beta}
\renewcommand{\d}{\delta}
\newcommand{\D}{\Delta}
\newcommand{\G}{\Gamma}
\renewcommand{\L}{\Lambda}
\newcommand{\var}{\varphi}
\newcommand{\Om}{\Omega}
\newcommand{\om}{\omega}
\def\mZ{{\mathcal Z}}
\def\mI{{\mathcal I}}
\def\mH{{\mathscr H}}
\renewcommand{\hat}{\widehat}
\renewcommand{\tilde}{\widetilde}
\def\Oh{{\cal O}}
\def\algorithm{\begin{center}
               \begin{minipage}{6in}
               \begin{tabbing}
               \marks}
\def\falgorithm{\end{tabbing}
                \end{minipage}
                \end{center}}
\def\marks{nn\= nn\= nn\= nn\= nn\= nn\= nn\= \kill}
\def\PSPACE{{\sf PSPACE}}
\def\PR{{\rm P}_{\kern-1pt\R}}
\def\PC{{\rm P}_{\kern-1pt\C}}
\def\NPR{{\rm NP}_{\kern-1pt\R}}
\def\NPC{{\rm NP}_{\kern-2pt\C}}
\def\DNPR{{\rm DNP}_{\kern-1pt\R}}
\def\DNPC{{\rm DNP}_{\kern-2pt\C}}
\def\PAR{{\rm PAR}_{\kern-1pt\R}}
\def\PHR{{\rm PH}_{\kern-1pt\R}}
\def\DPHR{{\rm DPH}_{\kern-1pt\R}}
\def\FPR{{\rm FP}_{\kern-1pt\R}}
\def\FPC{{\rm FP}_{\kern-1pt\C}}
\def\FPAR{{\rm FPAR}_{\kern-0.4pt\R}}
\def\FPARC{{\rm FPAR}_{\kern-0.4pt\C}}
\def\CPRi{{\rm \#P}_{\kern-2pt\R}}
\def\CPCi{{\rm \#P}_{\kern-2pt\C}}
\def\FEASR{{\mbox{\sc Feas}_{\kern-0.5pt\R}}}  
\def\FEASRbit{{\mbox{\sc Feas}^0_{\kern-1pt\R}}}
\def\SAS{{\mbox{\sc SAS}_{\kern-0.5pt\R}}}
\def\SASbit{{\mbox{\sc SAS}_{\kern-1pt\R}^0}}
\def\HNC{{\mbox{\sc HN}_{\kern-1pt\C}}}
\def\HNCbit{{\mbox{\sc HN}^0_{\kern-1pt\C}}}
\def\QASC{{\mbox{\sc QAS}_{\kern-1pt\C}}}
\def\DIMR{{\mbox{\sc Dim}_{\kern-0.5pt\R}}}
\def\DIMC{{\mbox{\sc Dim}_{\kern-0.5pt\C}}}
\def\DIMadd{{\mbox{\sc Dim}_{\kern-0.5pt\add}}}
\def\DIMRbit{{\mbox{\sc Dim}^0_{\kern-0.5pt\R}}}
\def\DIMCbit{{\mbox{\sc Dim}^0_{\kern-0.5pt\C}}}
\def\REACH{{\mbox{\sc Reach}_{\kern-0.5pt\R}}}
\def\REACHbit{{\mbox{\sc Reach}^0_{\kern-0.5pt\R}}}
\def\CREACHbit{{\mbox{\sc CReach}^0_{\kern-0.5pt\R}}}
\def\REACH{{\mbox{\sc Reach}_{\kern-0.5pt\R}}}
\def\REACHbit{{\mbox{\sc Reach}^0_{\kern-0.5pt\R}}}
\def\CREACHbit{{\mbox{\sc CReach}^0_{\kern-0.5pt\R}}}
\newcommand{\ord}{\mathrm{ord}}
\newcommand{\HHNC}{\mbox{\sc HN}_{\kern-1pt\C}}
\newcommand{\partder}[2]{\frac{\partial #1}{\partial #2}}
\def\prem{\mathrm{prem}\,}
\def\Reg{\mathrm{Reg}\,}
\def\Div{\mathrm{Div}\,}
\def\supp{\mathrm{supp}\,}
\def\mF{{\mathcal F}}
\def\mL{{\mathcal L}}
\def\mM{{\mathcal M}}
\def\mK{{\mathcal K}}
\def\mHom{{\mathcal Hom}}
\def\mH{{\mathcal H}}
\def\reg{\mathrm{reg}\,}
\newcommand{\ud}{\mathrm{d}}
\def\rk{\mathrm{rk}\,}
\def\im{\mathrm{im}\,}
\def\FPk{{\rm FP}_{\kern-1pt k}}
\def\Pk{{\rm P}_{\kern-1pt k}}
\def\NPk{{\rm NP}_{\kern-2pt k}}
\newcommand\CPpar[1]{{\rm \#P}_{\kern-2pt #1}}
\def\HNk{\textsc{HN}_{\kern-1pt k}}
\def\DIMk{{\mbox{\sc Dim}_{\kern-0.5pt k}}}
\newcommand{\comment}[1]{}
\def\mC{{\mathcal C}}
\def\mG{{\mathcal G}}
\def\H{\mathbb{H}}
\def\Hdr{H_{\rm{dR}}}
\def\tot{{\rm tot}}
\def\mU{{\mathcal U}}
\title{Castelnuovo-Mumford Regularity and Computing the de Rham
Cohomology of Smooth Projective Varieties}
\author{Peter Scheiblechner\footnote{Partially supported by DFG grant SCHE 1639/1-1.}\\
Hausdorff Center for Mathematics, \\
Endenicher Allee 62, 53115 Bonn, Germany\\
peter.scheiblechner@hcm.uni-bonn.de
}
\date{}
\begin{document}

\maketitle

\begin{abstract}
We describe a parallel polynomial time algorithm for computing the topological
Betti numbers of a smooth complex projective variety $X$.
It is the first single exponential time algorithm for computing the Betti
numbers of a significant class of complex varieties of arbitrary dimension.
Our main theoretical result is that
the Castelnuovo-Mumford regularity of the sheaf of differential
$p$-forms on $X$ is bounded by~$p(em+1)D$, where $e$, $m$, and $D$ are the
maximal codimension, dimension, and degree, respectively, 
of all irreducible components of $X$. 
It follows that, for
a union $V$ of generic hyperplane sections in $X$, the algebraic de
Rham cohomology of $X\setminus V$ is described by
differential forms with poles along $V$ of single exponential order.
By covering $X$ with sets of this type and using a \v{C}ech process, this
yields a similar description of the de Rham cohomology of~$X$,
which allows its efficient computation.
Furthermore, we give a parallel polynomial time algorithm for testing whether a
projective variety is smooth.
\end{abstract}

\noindent {\bf Mathematics Subject Classification (2010)} 68Q25, 14Q20, 14F40, 68W30


\section{Introduction}
A long standing open problem in algorithmic real algebraic geometry is to
construct a single exponential time algorithm for computing the Betti numbers
of semialgebraic sets (for an overview see~\cite{basu:08}). The best
result in this direction is in~\cite{basu:06}
saying that for fixed $\ell$ one can compute the first $\ell$ Betti numbers
of a semialgebraic set in single exponential time. 

In the complex setting one approach for computing Betti numbers is to
compute the algebraic de Rham cohomology. A result of Grothendieck~\cite{gro:66}
states that the de Rham cohomology of a smooth complex variety is canonically
isomorphic to the singular cohomology. An algorithm based on
$\mathcal{D}$-modules for computing the de Rham cohomology of the complement of
a complex affine variety is given
in~\cite{ota:98,wal:00}. This algorithm is used in \cite{wal:00b}
to compute the cohomology of a projective variety.
However, these algorithms are not analyzed with regard to their complexity.
Furthermore, they use Gr\"{o}bner basis computations in a non-commutative
setting, so that a good worst-case complexity is not to be expected. Indeed,
already in the commutative case, computing Gr\"{o}bner bases is exponential 
space complete~\cite{may:82,may:97}.

Via the well known Hodge decomposition, the de Rham cohomology of a smooth
projective variety $X$ is related to the sheaf cohomologies of the sheaves of
differential forms on $X$. Algorithms for computing sheaf cohomology are
described in~\cite{vasc:98,smi:00,eis:03} and implemented in
Macaulay2~\cite{M2}, see also~\cite{eis:01}.

A parallel polynomial time algorithm for counting the connected components
(i.e., computing the zeroth Betti number) of a complex affine variety
is given in~\cite{bus:09}. Although this problem can also be solved by applying
the corresponding real algorithms, the algorithm of~\cite{bus:09} is the first
one using the field structure of $\C$ only. It also extends to counting the
irreducible components. In~\cite{sch:10} it is described
how one can compute equations for the components in parallel polynomial time.

Concerning lower bounds it is shown in~\cite{sch:07} that it is $\PSPACE$-hard
to compute some fixed Betti number of a complex affine or projective variety
given over the integers.
Note that the varieties constructed in this reduction are highly singular. We
do not know of any lower bound result for the problem of testing whether a
variety is smooth.

\subsection{Main Result}
In this paper we describe an algorithm for computing the algebraic de Rham
cohomology of a smooth projective variety running in parallel polynomial time.
It is based on the same techniques as the algorithm in~\cite{bus:09} using
squarefree regular chains. Namely, by applying the algorithm
of Sz{\'a}nt{\'o}~\cite{sza:97,sza:99} we can construct a linear system of
equations describing the ideal (up to a given degree) of an affine variety.
This allows to compute a linear system describing the vanishing of differential
forms of some degree. 
Given a smooth projective variety $X\subseteq\proj^n$
of dimension $m$ and generic hyperplane sections $H_0,\ldots,H_m\subseteq X$
with $H_0\cap\cdots\cap H_m=\emptyset$, their complements $U_i=X\setminus H_i$
form an open affine cover of~$X$. Under the additional assumption that the
hypersurface $H_0\cup\cdots\cup H_m$ has normal crossings, we are able to
compute the cohomologies of the affine patches
$U_{i_0\cdots i_q}=U_{i_0}\cap\cdots\cap U_{i_q}$
and by a \v{C}ech process also the cohomology of $X$.

More precisely, let $D$ be the maximal degree and $e$ the maximal codimension of all irreducible components of $X$, and choose $s\ge m(em+1)D$. 
Now consider the double complex 
$$
K^{p,q}:=\bigoplus_{i_0<\cdots<i_q} \G(X,\Om_X^p((s+p)(H_{i_0}\cup\cdots\cup H_{i_q})))
$$
together with the \v{C}ech differential $\d\colon K^{p,q}\rightarrow K^{p,q+1}$
and the exterior differential $\ud\colon K^{p,q}\rightarrow K^{p+1,q}$.
We show that the de Rham cohomology of $X$ can be computed as the cohomology of the total complex of $K^{\bullet,\bullet}$, i.e., 
$$
\Hdr^\bullet(X)\simeq H^\bullet(\tot^\bullet(K^{\bullet,\bullet})).
$$

To describe the output of the algorithm explicitly, let the $H_i$ be defined by the
linear forms $\ell_i$. The cohomology $\Hdr^k(X)$ is then represented by a basis
consisting of vectors of rational differential forms $\om=(\om_{i_0\cdots i_q})$
of the form
\begin{equation}\label{eq:outputForm}
\om_{i_0\cdots i_q}=\frac{1}{(\ell_{i_0}\cdots\ell_{i_q})^t}
\sum_{0\le j_1<\cdots<j_p\le n}\om_{i_0\cdots i_q}^{j_1\cdots j_p}\ud X_{j_1}
\wedge\cdots\wedge\ud X_{j_p},
\end{equation}
where $p+q=k$, $t\le s+p$, and the polynomials
$\om_{i_0\cdots i_q}^{j_1\cdots j_p}\in\C[X_0,\ldots,X_n]$ are homogeneous of
degree~$t(q+1)-p$.

Furthermore, we show how to test whether a variety $X$ is smooth and how to choose sufficiently generic
generic hyperplanes $H_i$ with normal crossings in parallel polynomial time.
In summary, we prove the following theorem.
\begin{theorem}\label{thm:main}
Given homogeneous polynomials $f_1,\ldots,f_r\in\C[X_0,\ldots,X_n]$ of degree
at most $d$, one can test whether $X:=\mZ(f_1,\ldots,f_r)\subseteq\proj^n$ is
smooth and if so, compute the algebraic de Rham cohomology $\Hdr^\bullet(X)$
in parallel time $(n\log d)^{\Oh(1)}$ and sequential time $d^{\Oh(n^4)}$.
\end{theorem}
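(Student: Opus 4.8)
The plan is to assemble the theorem from the pieces announced in the introduction. The algorithm naturally splits into four stages, which I would present in the following order. First, the \emph{smoothness test}: given $f_1,\ldots,f_r$ one forms the Jacobian criterion, i.e. one must decide whether the ideal generated by the $f_i$ together with the maximal minors of the Jacobian matrix $(\partial f_i/\partial X_j)$ defines the empty set in $\proj^n$ (taking into account that $X$ may be reducible, so one really tests that $X$ is smooth of the expected local dimension at each point). This is a $\HHNC$-type problem and can be decided in parallel time $(n\log d)^{\Oh(1)}$ by the techniques of~\cite{bus:09} built on Sz\'ant\'o's algorithm~\cite{sza:97,sza:99}; along the way one also recovers the irreducible components and their dimensions and degrees, hence the invariants $m$, $e$, $D$ needed to set $s\ge m(em+1)D$.

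Second, the \emph{choice of hyperplanes}: one must produce linear forms $\ell_0,\ldots,\ell_m$ so that $H_0\cup\cdots\cup H_m$ has normal crossings on $X$, $H_0\cap\cdots\cap H_m=\emptyset$, and each $U_i=X\setminus H_i$ is affine. Genericity conditions of this kind are Zariski-open and nonempty, so a Schwartz--Zippel style argument over a large enough finite subset of $\C$ produces suitable $\ell_i$, and the verification that a given choice works is again a vanishing-of-ideal computation in parallel polynomial time. Third, the \emph{local computation}: on each affine patch $U_{i_0\cdots i_q}$ one uses the regularity bound (the main theoretical result quoted in the abstract, namely $\reg\Om_X^p\le p(em+1)D$) to know that the algebraic de Rham cohomology of $U_{i_0\cdots i_q}$ is computed by forms with poles of order at most $s+p$ along the $H_{i_\nu}$; the vanishing of such forms modulo exact ones is a linear-algebra condition on the coefficient polynomials $\om^{j_1\cdots j_p}_{i_0\cdots i_q}$, whose defining linear system is obtained from Sz\'ant\'o's degree-truncated description of the ideal of $X$. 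Computing kernels and images of these (single-exponentially large) linear systems is in $\FNCC$, hence parallel polynomial time. Fourth, the \emph{\v{C}ech assembly}: one forms the double complex $K^{\bullet,\bullet}$ with the two differentials $\d$ and $\ud$, invokes the isomorphism $\Hdr^\bullet(X)\simeq H^\bullet(\tot^\bullet(K^{\bullet,\bullet}))$, and reads off $\Hdr^k(X)$ as the cohomology of the total complex in degree $k$ — once more a linear-algebra step over the already-bounded spaces, producing a basis of the form~\eqref{eq:outputForm}.

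Putting the stages together, each is a parallel-polynomial-time routine operating on data of bit-size $d^{\Oh(n^{\Oh(1)})}$, so the composition runs in parallel time $(n\log d)^{\Oh(1)}$; the sequential bound $d^{\Oh(n^4)}$ comes from the size of the linear systems, which is governed by the number of monomials of degree roughly $s\cdot(q+1)$ in $n+1$ variables with $s$ single-exponential in $n$ and the \v{C}ech length $q\le m\le n$, giving an exponent of the shape $\Oh(n^4)$ after accounting for the $m$-fold products of linear forms and the $p,q$ ranges. The delicate point — and the one I would spend the most care on — is the \emph{correctness} of the local and \v{C}ech steps: one must be sure that truncating poles at order $s+p$ genuinely captures all of $\Hdr^\bullet(U_{i_0\cdots i_q})$ (this is exactly where the Castelnuovo--Mumford regularity bound is essential, via the vanishing of the relevant higher sheaf cohomology of the twisted sheaves of $p$-forms), and that the spectral sequence of the double complex $K^{\bullet,\bullet}$ degenerates appropriately so that the total cohomology really is $\Hdr^\bullet(X)$ rather than some associated graded object. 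Modulo those structural lemmas — which are the substance of the earlier sections — the theorem follows by bookkeeping the complexity of the four algorithmic stages.
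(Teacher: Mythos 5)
Your overall decomposition matches the paper's: the theorem is obtained by combining a smoothness test (Proposition~\ref{prop:smoothnessTest}), a deterministic choice of hyperplanes with normal crossings (Proposition~\ref{prop:chooseHyperplanes}), and the \v{C}ech-style computation of the de~Rham cohomology via the double complex $K^{\bullet,\bullet}$ (Proposition~\ref{prop:cohomology}). The local step via the regularity bound and the linear-algebra reduction are as you describe.

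However, there is a genuine gap in your smoothness test. You propose to apply the Jacobian criterion directly to the input polynomials $f_1,\ldots,f_r$, i.e.\ to check whether the $f_i$ together with suitable minors of $\left(\partial f_i/\partial X_j\right)$ have a common zero. This is incorrect, because $X=\mZ(f_1,\ldots,f_r)$ is the \emph{reduced} zero set, and the $f_i$ need not generate (even locally) the vanishing ideal $I(X)$. A trivial counterexample is $f=X_0^2$ in $\proj^2$: here $X=\mZ(X_0)$ is smooth, but $d_xf$ vanishes identically on $X$, so your test would report a singularity at every point. What is actually needed is the Jacobian of a system of generators of $I(X)$. The paper resolves this by first invoking Mumford's theorem (Proposition~\ref{prop:generation}), which guarantees that a smooth variety is scheme-theoretically cut out by hypersurfaces of degree $\le D=\deg X$, then computing a vector-space basis of $I(X)_{\le D}$ via Sz{\'a}nt{\'o}'s decomposition and the linear system~\eqref{eq:prem}, and finally applying the Jacobian criterion (with Mulmuley's rank reduction) to \emph{that} basis, component by equidimensional component. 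Without this substitution, the test can give false negatives, so this step must be repaired before the theorem follows.

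Two smaller points. First, your remark about the spectral sequence of $K^{\bullet,\bullet}$ ``degenerating'' so that the total cohomology is $\Hdr^\bullet(X)$ is off the mark: degeneration is neither needed nor used; what the paper proves (Lemma~\ref{lem:dRDblComplex}) is that the inclusion $K^{\bullet,\bullet}\hookrightarrow C^{\bullet,\bullet}$ induces an isomorphism on the $E_1$-pages of the second spectral sequences, which by bounded filtration gives an isomorphism of total cohomologies directly. Second, your Schwartz--Zippel appeal for finding the $\ell_i$ is on the right track, but the deterministic version requires quantifier elimination to produce explicit polynomials $G_\nu(\alpha)$ whose non-vanishing implies normal crossings, followed by a finite search along a curve $(1,b,\ldots,b^n)$; the degree and number of these $G_\nu$ must be bounded to get the claimed $d^{\Oh(n^4)}$ sequential time, which the paper does via the complexity of parallel quantifier elimination~\cite{fgm:90}.
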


As for the necessity of squarefree regular chains in our algorithm
we remark that for smooth varieties there are single exponential bounds for the
degrees in a Gr\"{o}bner basis (cf.~\S\ref{ss:CMReg}).
So perhaps one could replace the squarefree regular chains in our approach
by Gr{\"o}bner bases. But we do not know whether such an algorithm would be
well-parallelizable.

The new aspect of our result compared to the methods of~\cite{ota:98,wal:00,wal:00b}
is the complexity analysis of the algorithm, which is the main purpose of this
paper. We do not expect that our algorithm as stated would yield a practically
efficient implementation. However, it seems conceivable that a variation of the
approach (perhaps using  Gr\"{o}bner bases) could produce a reasonable
implementiation.

\subsection{Castelnuovo-Mumford Regularity}
\label{ss:CMReg}
The main theoretical result of this paper is a bound on the
Castelnuovo-Mum\-ford
regularity of the sheaf of regular differential $p$-forms on a smooth projective
variety $X$. This result allows to bound the degrees of the differential forms
one has to deal with in computing the cohomology of $X$. More precisely, the
regularity yields a bound on the order $t$ in~\eqref{eq:outputForm}, which in
turn determines the degree of the coefficients.

The Castelnuovo-Mumford regularity was defined in~\cite{mum:66} for sheaves
on~$\proj^n$. The definition has been modified in~\cite{eig:84} to apply
to a homogeneous ideal $I$. This notion was related to computational complexity
in~\cite{bast:87} by showing that
the regularity of $I$ equals the maximal degree in a reduced 
Gr\"{o}bner basis of $I$ with respect to the degree reverse lexicographic
order in generic coordinates. In this respect upper bounds on the regularity
of a homogeneous ideal
are of particular interest in computational algebraic geometry and commutative
algebra. For a general ideal, double exponential upper bounds were shown
in~\cite{giu:84} and~\cite{gal:79}. The famous example of~\cite{may:82} shows
that this is essentially best possible. However, there are several
results giving better bounds for the regularity in special cases,
such as~\cite{gru:83,pin:83,lazf:87,ran:90,pee:98,stv:98,kwa:00,gia:06}. 
A nice overview over
these kinds of results is given in~\cite{bamu:93}. This paper also contains
a bound on the regularity of the ideal of a smooth veriety $X$, which is
asymptotic to the product of the degree and the dimension of $X$. A more precise
bound in terms
of the degrees of generators of the ideal of~$X$ is proved
in~\cite{bel:91}. More generally, the authors prove the vanishing of the higher
cohomology of powers of the sufficiently twisted ideal sheaf of~$X$ 
(cf.~Proposition~\ref{prop:vanishing}).
Our bound on the regularity of the sheaf of differential forms on~$X$ is
deduced from this result. We are not aware of any other bounds on the
regularity of a sheaf other than a power of an ideal sheaf.

\subsection*{Acknowledgements}
The author is very grateful to Saugata Basu for being his host, many important
and interesting discussions, and recommending the book~\cite{lazf:04}.
Without him this work wouldn't have been possible. The author also thanks Manoj
Kummini for fruitful discussions about the Castelnuovo-Mumford regularity,
Christian Schnell for a discussion about the cohomology of hypersurfaces, 
Nicolas Perrin and Mart{\'i} Lahoz for useful discussions about exterior powers
of sheaves, as well as the anonymous referees for valuable comments and
suggestions.

\section{Preliminaries}
\subsection{Basic Notations}
\label{ss:basics}
Denote by $\proj^n:=\proj^n(\C)$ the projective space over~$\C$.
A {\em (closed) projective variety} $X\subseteq\proj^n$ is defined as the zero
set
$$
X=\mZ(f_1\ldots,f_r):=\{x\in\proj^n\,|\,f_1(x)=\cdots =f_r(x)=0\}
$$
of homogeneous polynomials $f_1\ldots,f_r\in \C[X_0,\ldots,X_n]$. Note that
$X$ may be reducible. Occasionally we write $\mZ_X(f):=X\cap\mZ(f)$ for $f\in \C[X_0,\ldots,X_n]$.
A {\em quasi-projective variety} is a difference $X\setminus Y$, where~$X$
and~$Y$ are closed projective varieties. The term variety will always mean
quasi-projective variety.
The {\em homogeneous (vanishing) ideal} $I(X)$ of the variety~$X$ is
defined as the ideal generated by the homogeneous polynomials vanishing on~$X$.
The {\em homogeneous coordinate ring} of $X$ is
$\C[X]:=\C[X_0,\ldots,X_n]/I(X)$. 
By the (weak) homogeneous Nullstellensatz we have
$\mZ(f_1,\ldots,f_r)=\emptyset$ iff there exists $N\in\N$ with
$(X_0,\ldots,X_n)^N\subseteq(f_1,\ldots,f_r)$. Its
effective version states that one can choose $N=(n+1)d-n$, when
$\deg f_i\le d$~\cite[Th{\'e}or{\`e}me 3.3]{laz:81}. According to
the affine effective Nullstellensatz, for $f_1,\ldots,f_r\in\C[X_1,\ldots,X_n]$
of degree $\le d$, we have $\mZ(f_1,\ldots,f_r)=\emptyset$ iff there exist
polynomials $g_1,\ldots,g_r$ with $\deg(g_i f_i)\le d^n$ such that
$1=\sum_i g_i f_i$~\cite{brow:87,koll:88,fig:90,jel:05}.

The {\em dimension} $\dim X$ is 
the {\em Krull dimension} of $X$ in the Zariski topology.
A variety all of whose irreducible components have the same
dimension~$m$ is called {\em (m-)equidimensional}. 
The {\em local dimension} $\dim_x X$ at $x\in X$ is defined as the
maximal dimension of all components through $x$.
A {\em hypersurface} of a variety $X$ is a closed subvariety $V$ with
$\dim_x V=\dim_x X-1$ for all $x\in V$.

We often identify $\proj^n\setminus\mZ(X_i)\simeq\C^n$, $0\le i\le n$, via
$(x_0:\cdots: x_n)\mapsto(\frac{x_0}{x_i},\ldots,\hat{\frac{x_i}{x_i}},\ldots,
\frac{x_n}{x_i})$, where as usual $\hat{\ }$ denotes omission. Under this
identification, a homogeneous polynomial $f\in\C[X_0,\ldots,X_n]$ corresponds
to its dehomogenization $f^i$ 
by
setting $X_i:=1$. We thus get a surjection
$^i\colon\C[X_0,\ldots,X_n]\rightarrow\C[X_0,\ldots,\hat{X_i},\ldots,X_n]$,
and the image of $I(X)$ under this map is the vanishing ideal of the affine
variety $X\setminus\mZ(X_i)$.
Now let $f_1,\ldots,f_r$ be homogeneous polynomials 
defining the hypersurfaces $H_1,\ldots,H_r$ of $\proj^n$.
Then we say that the closed variety~$X$ is
{\em scheme-theoretically cut out by the hypersurfaces} $H_1,\ldots,H_r$ iff
for each $i$, the dehomo\-genizations $f_1^i,\ldots,f_r^i$
generate the image of $I(X)$ in $\C[X_0,\ldots,\hat{X_i},\ldots,X_n]$.

For a polynomial $f\in \C[X_1,\ldots,X_n]$ its {\em differential} at $x\in\C^n$
is the linear function $d_x f\colon \C^n\rightarrow\C$ defined by
$d_x f(v):=\sum_i\partder{f}{X_i}(x)v_i$.
The tangent space of the variety $X$ at $x\in X\setminus\mZ(X_i)$
is defined as the vector subspace
$$
T_x X := \{v\in \C^n\,|\,\forall f\in I(X)\;d_xf^i(v)=0\}\subseteq \C^n.
$$
If $X$ is scheme-theoretically cut out by the hypersurfaces defined by the
homogeneous polynomials $f_1,\ldots,f_r$, then
$T_xX=\mZ(d_x f_1^i,\ldots,d_x f_r^i)$.
We have $\dim T_x X\ge \dim_x X$ for all $x\in X$.
We say that $x\in X$ is a {\em smooth} point in $X$ iff
$\dim T_x X=\dim_x X$.
The variety $X$ is {\em smooth} iff all of its points are smooth.

The {\em degree} $\deg X$ of an irreducible closed variety $X$ of dimension
$m$ is defined as the maximal cardinality of $X\cap L$ over all linear
subspaces $L\subseteq\proj^n$ of dimension
$n-m$~\cite[\S5A]{mum:76}.
We define the (cumulative) degree $\deg X$ of a reducible variety $X$ to be the
sum of the degrees of {\em all} irreducible components of $X$.
It follows essentially from B\'{e}zout's Theorem that
if $X$ is defined by polynomials of degree $\le d$, then
$\deg X\le d^n$~\cite{cgh:91}.

\subsection{Coherent Sheaves}
\label{ss:cohSheaves}
Let $X$ be a closed variety in $\proj^n$. 
Then every graded $\C[X]$-module~$M$ gives rise to a sheaf $\tilde{M}$ of
$\Oh_X$-modules on $X$ such that, on a principal open set
$U=X\setminus\mZ(f)$, the sections of~$\tilde{M}$ are given by
$\G(U,\tilde{M})=M_{(f)}$, the degree 0 part of the localization
of $M$ at~$f$.
A sheaf $\mF$ on $X$ is called {\em coherent} iff $\mF=\tilde{M}$
with a finitely generated graded $\C[X]$-module~$M$.

An important example is of course the structure sheaf $\Oh_X=\tilde{\C[X]}$. 
We also define the {\em twisting sheaf}
$\Oh_X(k):=\tilde{\C[X](k)}$ for $k\in \Z$, where $\C[X](k)_d:=\C[X]_{k+d}$.
The twisting sheaf behaves well with respect to direct and inverse images under
maps. In particular, for a closed embedding $i\colon X\hookrightarrow Y$ of
projective varieties
we have $i^*(\Oh_Y(k))\simeq \Oh_X(k)$ and $i_*(\Oh_X(k))\simeq (i_*\Oh_X)(k)$.
Furthermore, we have
$\Oh_{X}(k)\otimes\Oh_{X}(\ell)\simeq\Oh_{X}(k+\ell)$~\cite[II, Proposition 5.12]{hart:77}. 
The sheaf $\Oh_X(1)$ is called the {\em very ample line bundle} on $X$
determined by the embedding $X\hookrightarrow \proj^n$.
For any sheaf of $\Oh_X$-modules $\mF$ on $X$ we define the
{\em twisted} sheaf $\mF(k):=\mF\otimes\Oh_X(k)$ for $k\in\Z$.
The ideal sheaf $\mI_X$ of $X$ is defined as the kernel of the restriction
map $\Oh_{\proj^n}\rightarrow i_*\Oh_X$. 
We have $\mI_X=\tilde{I(X)}$, hence the ideal sheaf is coherent.

Fundamental for us is the sheaf of {\em regular differential forms}, which is defined
as follows. Let $\mU:=\{U_i\,|\,0\le i\le s\}$
be an open affine cover of $X$ (we can take $U_i:=\{X_i\ne 0\}$, but any other
open cover works too).
Denote by $\C[U_i]$ the affine coordinate ring of $U_i$, and let $\Om_{U_i}$ be
the $\C[U_i]$-module of {\em K{\"a}hler differentials} $\Om_{\C[U_i]/\C}$~\cite[Chapter 16]{eise:95}.
Then, $\Om_X$ is defined as the sheaf on~$X$ obtained by glueing
the sheaves on $U_i$ corresponding to $\Om_{U_i}$. This means that 
one determines a section $s$ on an open set $U\subseteq X$ 
by giving for each $i$ 
a section $s_i\in\Om_{U_i\cap U}$ with the property that for all $i,j$ we have
$s_i|U_i\cap U_j \cap U=s_j|U_i\cap U_j \cap U$.
The universal derivations $\ud\colon\C[U_i]\to\Om_{U_i}$ glue together to give
a map $\ud\colon\Oh_X\to\Om_{X}$ of sheaves, which is a derivation on the stalks.
The $p$-fold exterior power $\Om_X^p:=\bigwedge^p\Om_X$ is called the sheaf of
{\em regular (p-)forms}.
The derivation $\ud\colon\Oh_X\to\Om_{X}$ uniquely extends to the {\em exterior
derivative} $\ud\colon\Om_X^p\to\Om_X^{p+1}$ satisfying Leibnitz' rule and
$d\circ d=0$, so that we get the {\em (algebraic) de Rham complex}
$$
\Om_X^\bullet\colon\ 0\longrightarrow\Oh_X\stackrel{\ud}{\longrightarrow}
\Om_X^1\stackrel{\ud}{\longrightarrow}\cdots
\stackrel{\ud}{\longrightarrow}\Om_X^m\longrightarrow 0,
$$
where $m=\dim X$.

For $X=\proj^n$ there is a more explicit description of this complex,
which we give now (cf.~\S6.1 of~\cite{dimc:92}). Denote by~$\L$ the
module of K\"{a}hler differentials
$\Om_{\C[X_0,\ldots,X_n]/\C}$, which is the free module generated by
$\ud X_0,\ldots,\ud X_n$~\cite[Proposition 16.1]{eise:95}. The
universal derivation $\ud\colon\C[X_0,\ldots,X_n]\rightarrow\L$ is given by
$\ud f=\sum_i\partder{f}{X_i}\ud X_i$.
Then, $\L^p:=\bigwedge^p\L$ is the free module generated by
$\ud X_{i_1}\wedge\cdots\wedge\ud X_{i_p}$, $0\le i_1<\cdots <i_p\le n$.
Furthermore, the exterior derivative $\ud\colon\L^p\rightarrow\L^{p+1}$
yields the {\em de Rham complex}
$$
\L^0=\C[X_0,\ldots,X_n]\stackrel{\ud}{\longrightarrow}\L^1
\stackrel{\ud}{\longrightarrow}
\cdots\stackrel{\ud}{\longrightarrow}\L^n\stackrel{\ud}{\longrightarrow}\L^{n+1}
$$
The modules $\L^p$ are graded by setting
$$
\deg(g\ud X_{i_1}\wedge\cdots\wedge\ud X_{i_p}):=\deg(g)+p, \quad
g\in\C[X_0,\ldots,X_n]\quad\textrm{homogeneous}.
$$
Then $\ud$ is a map of degree 0. 
There is another derivation $\D\colon\L^p\rightarrow\L^{p-1}$ of degree~0, 
which can be defined as the {\em contraction with the Euler vector field}
$\sum_i X_i\partder{}{X_i}$. It is uniquely determined by Leibnitz'
rule and the formula $\D(\ud f)=\deg f\cdot f$ for a homogeneous polynomial
$f$, and satisfies $\D(\ud\a)+\ud(\D\a)=\deg\a\cdot \a$ for any homogeneous $\a\in\L^p$.

Now put $M^p:=\ker(\D\colon\L^p\rightarrow\L^{p-1})$. 
One can define the sheaf of differential $p$-forms on $\proj^n$ by setting
$\Om_{\proj^n}^p:=\tilde{M^p}$. 
Hence, for a homogeneous polynomial $f$ of degree $k$, each differential
$p$-form on $\proj^n\setminus\mZ(f)$ is of the form
\begin{equation}
\om=\frac{\a}{f^t}\quad\textrm{with}\quad\deg\a=tk\quad\textrm{and}\quad
\D(\a)=0,
\end{equation}
where $\a\in\L^p$ is homogeneous and $t\in\N$.
By the usual quotient rule one can extend $\ud\colon\L^p\rightarrow\L^{p+1}$ to
localizations.
Then one easily checks that $\ud(M^p_{(f)})\subseteq M^{p+1}_{(f)}$ for
homogeneous $f\in\C[X_0,\ldots,X_n]$.
This defines the exterior derivative
$\ud\colon\Om_{\proj^n}^p\rightarrow\Om_{\proj^n}^{p+1}$ on the sheaf level.

A sheaf $\mF$ on $X$ is said to be {\em locally free} iff it is locally
isomorphic to a direct sum of copies of $\Oh_X$. The local rank of $\mF$
is the number of copies of the structure sheaf needed, which is a locally
constant function. 
If $X$ is smooth and $m$-equidimensional, then $\Om_X^p$ is locally free of rank
$\binom{m}{p}$.

The isomorphism classes of locally free sheaves on $X$ of rank $k$ are in
one-to-one correspondence with those of vector bundles over $X$ of rank
$k$~\cite[II, Exercise~5.18]{hart:77}. A locally free sheaf of rank 1 is called
an {\em invertible sheaf} or {\em line bundle}. The tensor product
$\mL\otimes\mM$ of two line bundles $\mL,\mM$ is also a line bundle.
For any line bundle $\mL$, its {\em dual} sheaf $\check{\mL}:=\mHom(\mL,\Oh_X)$
is another line bundle satisfying
$\mL\otimes\check{\mL}\simeq\Oh_X$~\cite[II, Proposition 6.12]{hart:77}.
In particular, the sheaves $\Oh_X(k)$ defined above are line bundles, and we
have $\Oh_X(k)\check{\ }=\Oh_X(-k)$.


\subsection{Divisors and Line Bundles}
\label{ss:divisors}
Let $X$ be a smooth variety. A divisor on $X$
is an element of the free abelian group $\Div X$ generated by the
irreducible hypersurfaces of $X$. 
This means that each $D\in\Div X$ is a formal linear combination $D=\sum_i m_i V_i$,
where $m_i\in\Z$ and $V_i\subseteq X$ are irreducible hypersurfaces. The
{\em support} of $D$ is defined as $\supp D:=\bigcup_{m_i\ne 0} V_i$.
Each $D\in\Div X$
defines a line bundle $\Oh_X(D)$, which is a subsheaf of the sheaf $\mK$ {\em of total quotient rings of}
$\Oh_X$~\cite[p.~61]{mum:66}.
The stalk $\Oh_X(D)_x$ is $\Oh_{X,x}$, if $x\not\in\supp D$. For
$x\in\supp D$, the stalk is $\prod_i f_i^{-m_i}\Oh_{X,x}$, where 
$f_i$ is a local equation of $V_i$ at $x$.
The rule $D\mapsto \Oh_X(D)$ maps $\Div X$ bijectively onto the invertible
subsheaves of $\mK$, and it maps sums to tensor products~\cite[II, Proposition 6.13]{hart:77}.

Let $X$ be closed and $H$ a hyperplane in $\proj^n$ meeting~$X$
{\em properly}, i.e., $H$ does not contain any irreducible component of $X$, so
that $X\cap H$ is a hypersurface in~$X$.
Then the {\em hyperplane section} $V:=X\cap H$ defines a divisor
$H\cdot X=\sum_i m_i V_i$,
where the $V_i$ are the irreducible components of $V$, and $m_i$ is the
{\em intersection multiplicity} $i(X,H;V_i)$ between $X$ and $H$ along $V_i$,
which can be defined as follows. 
Choose $x\in V_i$ and a (reduced) local equation
$f\in\Oh_{\proj^n,x}$ of $H$. Then $i(X,H;V_i)$ is the {\em order of vanishing}
$\ord_{V_i}(f)$ of $f$ along $V_i$, i.e., the maximal $k\in\N$ such that $f=gh^k$
with some $g$ in $\Oh_{X,x}$, where $h$ is a local equation of $V_i$ at $x$.
Note that $\Oh_{X,x}$ is factorial, since $X$ is smooth.
The line bundle $\Oh_X(H\cdot X)$ is isomorphic to the very ample line bundle
$\Oh_X(1)$.

A hypersurface $V\subseteq X$ is said to have {\em normal crossings} iff 
for each $x\in V$ contained in $k$ irreducible components
$V_1,\ldots,V_k$ of $V$, there exist local equations $f_i\in\Oh_{X,x}$ of $V_i$
around $x$, such that $d_x f_1,\ldots, d_x f_k$ are linearly independent
in the dual $(T_xX)^*$.
Note that the case $k=1$ implies that all irreducible components of $V$ are
smooth. 

\subsection{Sheaf Cohomology}
\label{ss:sheafCoh}
Let $\mF$ be a coherent sheaf and
$\mU:=\{U_i\,|\,0\le i\le s\}$ an open cover of the variety~$X$.
For $0\le q\le s$ and $0\le i_0<\cdots<i_q\le s$ set
$U_{i_0\cdots i_q}:=U_{i_0}\cap\cdots\cap U_{i_q}$.
The {\em \v{C}ech complex} is defined by
$C^{q}:=C^{q}(\mU,\mF):=\bigoplus_{i_0<\cdots<i_q} \mF(U_{i_0\cdots i_q})$,
with the {\em \v{C}ech differential} $\d\colon C^{q}\rightarrow C^{q+1}$
given by
\begin{equation}\label{eq:cechDiff}
(\d(\om))_{i_0\cdots i_{q+1}}:=\sum_{\nu=0}^{q+1}(-1)^\nu 
\om_{{i_0\cdots{\hat i_\nu}\cdots i_{q+1}}}|U_{i_0\cdots i_{q+1}}
\quad\mathrm{for}\quad \om=(\om_{i_0\cdots i_q})\in C^q.
\end{equation}
Then one
easily checks that $\d\circ\d=0$, hence $(C^\bullet,\d)$ is indeed a complex.
Its cohomology $H^i(\mU,\mF):=H^i(C^\bullet,\d)$ is called the $i$-th 
{\em \v{C}ech cohomology} of~$\mF$ with respect to~$\mU$. 
The $i$-th {\em \v{C}ech cohomology} (or {\em sheaf cohomology}) of $\mF$ is
defined as the direct limit over all open covers $\mU$ of $X$, directed by
refinements.
A sheaf~$\mF$ on $X$ is called {\em acyclic} iff 
$H^i(X,\mF)=0$ for all $i>0$.

A cover $\mU$ of $X$ is called a {\em Leray cover} for $\mF$ iff
$\mF$ is acyclic on $U_{i_0\cdots i_{q}}$ for all $i_0<\cdots<i_q$.
Leray's Theorem states that in this case we have
$H^i(X,\mF)=H^i(\mU,\mF)$~\cite[III, Exercise~4.11]{hart:77}.
Since $\mF$ is a coherent sheaf, this is true for any {\em affine}
cover~\cite[III, Theorem~3.5]{hart:77}.
It easily follows that for a morphism $f\colon X\rightarrow Y$ there is a
natural isomorphism $H^i(X,\mF)\simeq H^i(Y,f_*\mF)$, where $f_*\mF$ denotes the
direct image of the sheaf $\mF$ under $f$~\cite[III, Exercise\ 4.1]{hart:77}.

\subsection{Hypercohomology and de Rham Cohomology}
\label{ss:doubleComplexHypercoh}
The material in this section is explained e.g.\ in~\cite{grh:78}.
Let $X$ be a smooth variety and consider a complex of coherent sheaves
$(\mF^\bullet,\ud)$ on~$X$ with $\mF^p=0$ for $p<0$. Then, for an open cover
$\mU$, the \v{C}ech
complexes $C^\bullet(\mU,\mF^p)$ as defined
in~\S\ref{ss:sheafCoh} fit together to the {\em \v{C}ech double complex}
$C^{\bullet,\bullet}:=C^{\bullet,\bullet}(\mU,\mF^{\bullet})$ by setting
$$
C^{p,q}(\mU,\mF^{\bullet})=\bigoplus_{i_0<\cdots<i_q} \mF^p(U_{i_0\cdots i_q})
\quad\text{for all}\quad p,q\ge 0.
$$
The two differentials are the one induced by the differential $\ud$ of $\mF$
and the \v{C}ech differential $\d$ defined by~\eqref{eq:cechDiff}.
Denote by
$\H^\bullet(\mU,\mF^\bullet):=H^\bullet(\tot^\bullet(C^{\bullet,\bullet}))$
the cohomology of the total complex of $C^{\bullet,\bullet}(\mU,\mF^{\bullet})$.
Then the {\em hypercohomology} $\H^i(X,\mF^\bullet)$ of the complex of sheaves
$\mF^\bullet$ is
defined  as the direct limit of $\H^\bullet(\mU,\mF^\bullet)$ over all open
covers $\mU$ of $X$, directed by refinement.
As for any double complex~\cite[\S2.4]{mcc:85},
there are two spectral sequences
\begin{align*}
'E_2^{p,q}=H_\ud^p(H^q(X,\mF^\bullet)) & \quad\Rightarrow\quad \H^{p+q}(X,\mF^\bullet)\quad\mathrm{and}\\
''E_2^{p,q}=H^q(X,\mH^p(\mF^\bullet)) & \quad\Rightarrow\quad \H^{p+q}(X,\mF^\bullet),
\end{align*}
where the {\em cohomology sheaf} is defined by
$$
\mH^p(\mF^\bullet):=\ker(\ud\colon\mF^p\rightarrow\mF^{p+1})/
\im(\ud\colon\mF^{p-1}\rightarrow\mF^p).
$$
The first spectral sequence implies that if all the sheaves $\mF^p$ are
acyclic, then $\H^\bullet(X,\mF^\bullet)=H^\bullet(\G(X,\mF^\bullet))$ is
the cohomology of the complex of global sections.
Similarly as for sheaf cohomology we have
$\H^i(X,\mF^\bullet)\simeq\H^\bullet(\mU,\mF^\bullet)$, if $\mU$ is a Leray
cover for all $\mF^p$.

A map of complexes of sheaves $f\colon\mF^\bullet\rightarrow\mG^\bullet$ is
called a {\em quasi-isomorphism} iff it induces an isomorphism
$\mH^\bullet(\mF^\bullet)\rightarrow\mH^\bullet(\mG^\bullet)$. By comparing the
second spectral sequences of the hypercohomologies of $\mF^\bullet$ and
$\mG^\bullet$ it follows that $f$ induces an isomorphism
$\H^{\bullet}(X,\mF^\bullet)\stackrel{\sim}{\rightarrow}\H^{\bullet}(X,\mG^\bullet)$.

\comment{
Now let $m:=\dim X$ and consider the algebraic de Rham complex
$$
\Om_X^\bullet\colon\ 0\longrightarrow\Oh_X\stackrel{\ud}{\longrightarrow}
\Om_X^1\stackrel{\ud}{\longrightarrow}\cdots
\stackrel{\ud}{\longrightarrow}\Om_X^m\longrightarrow 0
$$
of regular differential forms on $X$ together with the exterior derivative.
}
The {\em algebraic de Rham cohomology} of $X$ is defined as the hypercohomology
$$
\Hdr^\bullet(X):=\H^\bullet(X,\Om_X^\bullet)
$$
of the de Rham complex.
If $X$ is affine, then $H^\bullet_{\rm{dR}}(X)$ can be computed by taking the
cohomology of the complex $\G(X,\Om^\bullet_X)$ of global sections,
since all~$\Om^p_X$ are acyclic.

\subsection{Computational Model}
Our model of computation is that of algebraic circuits over $\C$,
cf.~\cite{vzg:86,bucu:05b}.
We set $\C^\infty:=\bigsqcup_{n\in\N}\C^n$. The \textit{size} of an algebraic
circuit $\mC$
is the number of nodes of $\mC$, and its \textit{depth} is the maximal length
of a path from an input to an output node. We say that a function
$f\colon \C^\infty\rightarrow \C^\infty$ can be computed
\textit{in parallel time~$d(n)$ and sequential time $s(n)$} iff there exists a
polynomial-time uniform family of algebraic circuits $(\mC_n)_{n\in\N}$
over~$\C$ of size~$s(n)$ and depth $d(n)$ such that~$\mC_n$ computes $f|\C^n$.

\subsection{Efficient Parallel Linear Algebra}
\label{ss:linAlg}
We use differential forms to reduce our problem to linear algebra, for which
efficient parallel algorithms exist. 
In particular, we need to be able to solve the following problems: 
\begin{enumerate}
\item Given $A\in\C^{n\times m}$ and $b\in\C^n$, decide whether the linear
system of equations $Ax=b$ has a solution and if so, compute one.
\item Compute a basis of the kernel of a matrix $A\in\C^{n\times m}$.
\item Compute a basis of the image of a matrix $A\in\C^{n\times m}$.
\item Given a linear subspace $V\subseteq\C^n$ in terms of a basis, and given
linearly independent $v_1,\ldots,v_i\in V$, extend them to a basis of $V$.
\end{enumerate}
These problems are easily reduced to inverting a regular square-matrix
(thus to computing the characteristic polynomial) and
computing the rank of a matrix. For instance, the last problem boils down to
rank computations as follows. Let $b_1,\ldots,b_m\in V$ be the given basis.
Set $B:=(v_1,\ldots,v_i)$. For all $j=1,2,\ldots,m$ do: if
$\rk(B,b_j)>\rk B$ then append $b_j$ to $B$.

Mulmuley~\cite{mul:87} has reduced the problem of computing the rank to the
computation of the characteristic polynomial of a matrix. Since we need his
construction, we describe it here.
Let $A\in\C^{m\times m'}$ be a matrix. Then 
$$
\rk\left(\begin{array}{cc}
0 & A\\
A^T & 0
\end{array}\right)=2\,\rk A,
$$
so we can assume $m=m'$. Introducing the additional variable $T$, define the
diagonal matrix $X:=\mathrm{diag}(1,T,\ldots,T^{m-1})$, and
consider the characteristic polynomial $p_A(Z)$ of $XA$ over the field $\C(T)$,
$p_A(Z):=\det(XA-ZI)$. Then the rank of $A$ equals $m-s$,
where $s$ is the  maximal integer with $Z^s|p_A(Z)$. We will call $p_A(Z)$ the
{\em Mulmuley polynomial} of $A$.

The characteristic polynomial of an $m\times m$ matrix can be computed in
parallel (sequential) time $\Oh(\log^2 m)$ ($m^{\Oh(1)}$) with the algorithm
of~\cite{ber:84}.
If the matrix has polynomial entries of degree $d$ in $n$ variables, then
the Berkowitz algorithm can be implemented in parallel (sequential) time
$\Oh(n\log m\log(md))$ ($(md)^{\Oh(n)}$)~\cite{sch:07a}.

\section{Castelnuovo-Mumford Regularity}\label{se:CMRegularity}
A nice exposition about various versions of Castelnuovo-Mumford regularity and
vanishing results is contained in the book~\cite{lazf:04}.
Let $X\subseteq\proj^n$ be a smooth closed subvariety.
Recall from~\S\ref{ss:cohSheaves} that $\Oh_X(1)$ denotes the very ample line
bundle on~$X$ determined by the embedding $X\hookrightarrow \proj^n$, and that
for a coherent sheaf~$\mF$ on $X$ we put $\mF(k):=\mF\otimes\Oh_X(k)$. The
following definition is due to~\cite{mum:66} building on ideas of
Castelnuovo.
\begin{definition}
The coherent sheaf $\mF$ on $X$ is called {\em k-regular} iff
\begin{equation}
H^i\left(X,\mF(k-i)\right)=0\quad\textrm{for all}\quad i>0.
\end{equation}
The {\em Castelnuovo-Mumford regularity} $\reg(\mF)$ of $\mF$ is defined as the
infimum over all $k\in\Z$ such that $\mF$ is $k$-regular.
\end{definition}

\begin{remark}\label{rem:shortExactSequ}
\begin{enumerate}
\item[(i)] A fundamental result of~\cite{mum:66} is that if $\mF$ is
$k$-regular, then $\mF$ is $\ell$-regular for all $\ell\ge k$.
\item[(ii)] Let
$$
0\rightarrow\mF\rightarrow\mG\rightarrow\mH\rightarrow 0
$$
be a short exact sequence of coherent sheaves on $X$.
The long exact cohomology sequence shows that
$$
\reg(\mH)\le\max\{\reg(\mF)-1,\reg(\mG)\}\quad\text{and}
$$
$$
\reg(\mG)\le\max\{\reg(\mF),\reg(\mH)\}.
$$
\item[(iii)] Let $i\colon X\hookrightarrow \proj^n$ be the closed embedding, and
$\mF$ a coherent sheaf on $X$. The projection formula shows
$i_*\left(\mF\otimes i^*\Oh_{\proj^n}(t)\right)=i_*\mF\otimes_{\Oh_{\proj^n}}\Oh_{\proj^n}(t)$
for all $t\in\Z$~\cite[II, Exercise 5.1]{hart:77}, hence
$$
\reg(\mF)=\reg(i_*\mF).
$$
This is usually used as an argument that one can restrict to the case $X=\proj^n$.
However, we are dealing with exterior powers and thus need the more general
situation, since a direct image of a locally free sheaf is not locally free,
and reasonable formulas for exterior powers hold for locally free sheaves only
(in particular, Corollary~\ref{cor:regExterior}).
\item[(iv)] Let $X\subseteq\proj^n$ be a subscheme of dimension $m$ with ideal
sheaf $\mI=\mI_X$, and let $k>0$. Then $\mI$ is $k$-regular if and only if
$H^i\left(\proj^n,\mI(k-i)\right)=0$ for all $0<i\le m+1$~\cite[Example 1.8.29]{lazf:04}.
This follows from the short exact sequence $0\to\mI\to\Oh_{\proj^n}\to i_*\Oh_X\to 0$
using that $H^i(X,\mF)=0$ for all $i>m$ and any coherent sheaf~$\mF$~\cite[III, Theorem 2.7]{hart:77}.
\end{enumerate}
\end{remark}
\begin{example}\label{ex:regPn}
\begin{enumerate}
\item[(i)] 
Theorem 5.1 in Chapter III of~\cite{hart:77} shows 
$\reg(\Oh_{\proj^n})=0$.
\item[(ii)] The structure sheaf $\Oh_X$ of a hypersurface $X\subseteq\proj^n$
of degree $D$ has regularity $D-1$. This follows from the isomorphism
$\mI\simeq\Oh_{\proj^n}(-D)$ for the ideal sheaf $\mI$ of $X$ and the exact
sequence $0\to\mI\to\Oh_{\proj^n}\to i_*\Oh_X\to 0$.
\item[(iii)] Example (i) together with the exact sequence
of~\cite[II, Theorem 8.13]{hart:77} implies
$\reg(\Om_{\proj^n})=2$.
\end{enumerate}
\end{example}
The aim of this section is to prove the following theorem.
\begin{theorem}\label{thm:boundReg}
Let $X\subset\proj^n$ be a smooth closed subvariety of dimension~$m$.
Let $D$ be the maximal degree and $e$ the maximal codimension of all irreducible
components of $X$. Then
\begin{align*}
\reg(\Om_X^p) & \le p(em+1)D\quad\mathrm{for}\quad p>0,\\
\reg(\Oh_X) & \le e(D-1).
\end{align*}
\end{theorem}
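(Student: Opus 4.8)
The plan is to bound $\reg(\Oh_X)$ first and then bound $\reg(\Om_X^p)$ by induction on $p$ via the conormal sequence. We may assume $X$ is irreducible: a smooth variety is the disjoint union of its (smooth, irreducible) connected components $X_j$, the sheaves $\Oh_X$ and $\Om_X^p$ split accordingly, $\reg$ of a direct sum is the maximum of the summands' regularities, and if $X_j$ has dimension $m_j$, codimension $e_j$ and degree $D_j$ then $e_jm_j\le em$ and $D_j\le D$, so the bounds for the $X_j$ yield those for $X$. We may also assume $X\ne\proj^n$, i.e.\ $e\ge1$ (the case $X=\proj^n$ being covered by Example~\ref{ex:regPn}). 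For $\Oh_X$: the sequence $0\to\mI_X\to\Oh_{\proj^n}\to i_*\Oh_X\to0$, together with Remark~\ref{rem:shortExactSequ}(ii),(iii) and Example~\ref{ex:regPn}(i), gives $\reg(\Oh_X)\le\max\{\reg(\mI_X)-1,0\}$, so it suffices that $\reg(\mI_X)\le e(D-1)+1$, which is the case $s=1$ of Proposition~\ref{prop:vanishing}.

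For the main bound, let $N^\vee:=\mI_X/\mI_X^2$ be the conormal sheaf, locally free of rank $e$ since $X$ is smooth and irreducible. From $0\to\mI_X^2\to\mI_X\to N^\vee\to0$, Remark~\ref{rem:shortExactSequ}(ii) gives $\reg(N^\vee)\le\max\{\reg(\mI_X^2)-1,\reg(\mI_X)\}$, and the cases $s=1,2$ of Proposition~\ref{prop:vanishing} yield a bound of the form $\reg(N^\vee)\le(e+1)D-e$. The conormal sequence $0\to N^\vee\to\Om_{\proj^n}^1|_X\to\Om_X^1\to0$ of locally free sheaves induces, for each $p$, the standard decreasing filtration of $\Om_{\proj^n}^p|_X=\bigwedge^p(\Om_{\proj^n}^1|_X)$ with top quotient $\Om_X^p$ and remaining graded pieces $\bigwedge^aN^\vee\otimes\Om_X^{p-a}$ ($a=1,\dots,p$). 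Writing $0\to W_p\to\Om_{\proj^n}^p|_X\to\Om_X^p\to0$ for the ensuing sequence and applying Remark~\ref{rem:shortExactSequ}(ii) to it and to the filtration of $W_p$, we get
$$
\reg(\Om_X^p)\le\max\{\reg(\Om_{\proj^n}^p|_X)\,,\ \max\nolimits_{1\le a\le p}(\reg(\bigwedge^aN^\vee\otimes\Om_X^{p-a})-1)\}.
$$
Two auxiliary estimates control the right side. First, the downward exterior‑power Euler sequences $0\to\Om_{\proj^n}^{q+1}|_X\to\Oh_X(-q-1)^{\binom{n+1}{q+1}}\to\Om_{\proj^n}^q|_X\to0$, in which $\Om_{\proj^n}^q|_X$ is the quotient, run downward from $\Om_{\proj^n}^n|_X=\Oh_X(-n-1)$ to give $\reg(\Om_{\proj^n}^p|_X)\le\reg(\Oh_X)+p+1$. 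Second, Corollary~\ref{cor:regExterior} bounds the regularity of exterior powers and tensor products of locally free sheaves on $X$: $\reg(\bigwedge^aN^\vee)\le a\reg(N^\vee)$, and $\reg(\bigwedge^aN^\vee\otimes\Om_X^{p-a})\le a\reg(N^\vee)+\reg(\Om_X^{p-a})$ for $1\le a<p$, whereas for $a=p$ the piece is just $\bigwedge^pN^\vee$, so no extra term $\reg(\Oh_X)$ enters there --- a point that matters for the estimate.

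The induction on $p$ then closes: the base case $p=0$ is the bound on $\reg(\Oh_X)$, and for $p\ge1$ one substitutes the inductive bounds $\reg(\Om_X^{p-a})\le(p-a)(em+1)D$ for $1\le a<p$, the bound $\reg(\bigwedge^pN^\vee)\le p((e+1)D-e)$ for $a=p$, and $\reg(\Om_{\proj^n}^p|_X)\le e(D-1)+p+1$ into the displayed inequality; using only $e\ge1$ and $1\le p\le m$, each of the resulting terms is $\le p(em+1)D$ by a direct computation with ample slack. I expect the real difficulty to lie not in this bookkeeping but in Corollary~\ref{cor:regExterior}: since $i_*$ commutes with neither exterior powers nor the relevant tensor operations for sheaves that are locally free on $X$ but not on $\proj^n$ (Remark~\ref{rem:shortExactSequ}(iii)), these regularity estimates cannot be reduced to the classical case $X=\proj^n$ and must be proved intrinsically on $X$. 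A secondary point requiring care is to extract the bounds on $\reg(\mI_X^s)$ from Proposition~\ref{prop:vanishing} in terms of $\deg X=D$ rather than the degrees of a defining set of equations.
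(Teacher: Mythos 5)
Your plan is sound in outline and proves the theorem, but the route is genuinely different from the paper's. The paper first bounds $\reg(\Om_X)\le(e+1)D-e$ via a single application of the conormal sequence (Proposition~\ref{prop:reg1Forms}), and then obtains $\reg(\Om_X^p)$ in one stroke from $\Om_X^p=\bigwedge^p\Om_X$ using Corollary~\ref{cor:regExterior}. You instead run an induction on $p$, using for each $p$ the full \cite[II, Exercise 5.16]{hart:77} filtration of $\Om_{\proj^n}^p|_X=\bigwedge^p(i^*\Om_{\proj^n})$ with top quotient $\Om_X^p$, a downward Euler-sequence bound on $\reg(\Om_{\proj^n}^p|_X)$, and the inductive bound on lower $\Om_X^{p-a}$. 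Both routes rest on the same machinery (Lemmas~\ref{lem:boundRes}, \ref{lem:freeRes}, Proposition~\ref{prop:tensorProd}, Corollary~\ref{cor:regExterior}, and Corollary~\ref{cor:regIdealSheaf}); the paper's is shorter and also yields the sharper intermediate bound $\reg(\Om_X)\le(e+1)D-e$, whereas yours is more uniform but only produces the target $p(em+1)D$. Interestingly, the filtration you use here is exactly what the paper deploys later, in the proof of Lemma~\ref{lem:restrSurj}.

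There is one genuine lapse you should fix: you state Corollary~\ref{cor:regExterior} and Proposition~\ref{prop:tensorProd} as $\reg(\bigwedge^a N^\vee)\le a\reg(N^\vee)$ and $\reg(\bigwedge^aN^\vee\otimes\Om_X^{p-a})\le a\reg(N^\vee)+\reg(\Om_X^{p-a})$, silently dropping the $(m-1)(\Reg(X)-1)$ correction terms. These terms do not vanish in general: $\Reg(X)=\max\{1,\reg(\Oh_X)\}$ and $\reg(\Oh_X)$ can be as large as $e(D-1)$, so the correction is genuine. Your remark that ``no extra term $\reg(\Oh_X)$ enters'' for $a=p$ is therefore also incorrect --- $\reg(\Oh_X)$ does enter through $\Reg(X)$. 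Fortunately this is not fatal: putting back the full terms, the contribution from $\bigwedge^aN^\vee\otimes\Om_X^{p-a}$ becomes at most $a\bigl((e+1)D-e\bigr)+(p-a)(em+1)D+a(m-1)(e(D-1)-1)$ (with $\reg(\Oh_X)$ when $a=p$), and the Euler-sequence piece contributes $e(D-1)+p+1$; a short computation using $e\ge1$, $D\ge2$, $1\le p\le m$ shows every term is still $<p(em+1)D$, so the induction closes. Finally, a minor notational point: you refer to ``the case $s=1$'' of Proposition~\ref{prop:vanishing}, but the exponent there is $a$; and to pass from that proposition (stated for varieties cut out by bounded-degree hypersurfaces) to a bound in terms of $\deg X=D$ you need Proposition~\ref{prop:generation}, as packaged in Corollary~\ref{cor:regIdealSheaf} --- you flag this correctly at the end, but it belongs in the argument, not an afterthought.
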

\begin{remark}
For $X=\proj^n$ the first claim is false as Example~\ref{ex:regPn} (iii) shows.
\end{remark}
We will reduce this theorem to the following vanishing result of~\cite{bel:91}.
\begin{proposition}~\label{prop:vanishing}
Let $\mI$ be the ideal sheaf of a smooth irreducible closed variety
in~$\proj^n$ of codimension $e$, which
is scheme-theoretically cut out by hypersurfaces of degrees at most~$D$. Then
$$
H^i(\proj^n,\mI^a(k))=0\quad\textrm{for}\quad a\ge 0,\ i>0,\ k\ge (a+e-1)D-n,
$$
where $\mI^a$ denotes the $a$-th power of the ideal sheaf $\mI$.
\end{proposition}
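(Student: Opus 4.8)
The plan is to follow the argument of~\cite{bel:91}: pass to the blow-up of $\proj^n$ along the smooth center $X$ and deduce the vanishing from the Kawamata--Viehweg vanishing theorem on the blow-up. As a preliminary reduction I would arrange that $X$ be cut out by hypersurfaces of degree exactly $D$. If $f_1,\ldots,f_r$ scheme-theoretically cut out $X$ with $\deg f_j=d_j\le D$, replace them by the family $\{X_\mu^{D-d_j}f_j:0\le\mu\le n,\ 1\le j\le r\}$ of forms of degree $D$; on the chart $\{X_\nu\ne 0\}$ the dehomogenization of $X_\nu^{D-d_j}f_j$ is $f_j^\nu$, so this family still scheme-theoretically cuts out $X$. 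Sheaf-theoretically this means that a direct sum of copies of $\Oh_{\proj^n}(-D)$ surjects onto $\mI$, so $\mI(D)$ is globally generated.

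Let $\pi\colon Y\to\proj^n$ be the blow-up of $\proj^n$ along $X$, with exceptional divisor $E$. Since $X$ is smooth, $Y$ is smooth, $\pi^{-1}\mI\cdot\Oh_Y=\Oh_Y(-E)$, and $E$ is a $\proj^{e-1}$-bundle over $X$ (the projectivized normal bundle). Write $h$ for a divisor on $Y$ with $\Oh_Y(h)\simeq\pi^*\Oh_{\proj^n}(1)$. I would use three facts about this blow-up. First, the canonical bundle formula $K_Y\equiv\pi^*K_{\proj^n}+(e-1)E\equiv-(n+1)h+(e-1)E$. Second, since $X$ is smooth it is a local complete intersection, so $\mI$ is locally generated by a regular sequence and its powers are integrally closed; by standard properties of the blow-up of such a center, $\pi_*\Oh_Y(-aE)=\mI^a$ and $R^j\pi_*\Oh_Y(-aE)=0$ for all $a\ge 0$ and $j>0$. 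By the projection formula and the Leray spectral sequence this gives, for all $i\ge 0$ and $k\in\Z$,
$$
H^i(\proj^n,\mI^a(k))\;\simeq\;H^i\bigl(Y,\Oh_Y(kh-aE)\bigr).
$$
Third, pulling back the surjection $\bigoplus\Oh_{\proj^n}(-D)\twoheadrightarrow\mI$ and composing with $\pi^*\mI\twoheadrightarrow\pi^*\mI\cdot\Oh_Y=\Oh_Y(-E)$ exhibits $\Oh_Y(Dh-E)$ as globally generated, so $Dh-E$ is nef.

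It remains to invoke vanishing. By the canonical bundle formula, $\Oh_Y(kh-aE)\simeq\Oh_Y(K_Y+N_{a,k})$ with $N_{a,k}:=(k+n+1)h-(a+e-1)E$, so Kawamata--Viehweg on the smooth projective variety $Y$ gives $H^i(Y,\Oh_Y(K_Y+N_{a,k}))=0$ for all $i>0$ as soon as $N_{a,k}$ is nef and big. Under the hypothesis $k\ge(a+e-1)D-n$ I decompose
$$
N_{a,k}=(a+e-1)(Dh-E)+\bigl(k+n+1-(a+e-1)D\bigr)h.
$$
Here $a+e-1\ge 0$ (as $e\ge 1$, $a\ge 0$) and $Dh-E$ is nef, so the first summand is nef; the hypothesis makes the coefficient of the second summand at least $1$, hence $N_{a,k}-h$ is nef, while $h=\pi^*\Oh_{\proj^n}(1)$ is nef and big ($h^n=1>0$ since $\pi$ is birational). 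Thus $N_{a,k}=(N_{a,k}-h)+h$ is a sum of a nef divisor and a nef-and-big divisor, hence nef and big. Combining with the displayed isomorphism gives $H^i(\proj^n,\mI^a(k))=0$ for $i>0$, which is the claim; the case $a=0$, where $\mI^0=\Oh_{\proj^n}$, is covered by the same computation.

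The numerology of the last paragraph and the application of Kawamata--Viehweg are routine once the geometry is in place, so I expect the real work to be in the second of the three facts above: the identities $\pi_*\Oh_Y(-aE)=\mI^a$ and $R^{j>0}\pi_*\Oh_Y(-aE)=0$. This is exactly the point where smoothness of $X$ is indispensable --- for a general center one only obtains the integral closure $\overline{\mI^a}$ of $\mI^a$, and the regularity estimate fails --- and it rests on the local structure of $\mI$ as a regular-sequence ideal together with the fibrewise cohomology of the projectivized normal bundle $E\to X$. A smaller but genuine point is the verification, in the first paragraph, that multiplying generators by monomials preserves the \emph{scheme-theoretic}, not merely the set-theoretic, condition of cutting out $X$.
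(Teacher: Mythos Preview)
Your proof is correct and is precisely the argument of Bertram--Ein--Lazarsfeld~\cite{bel:91}. Note, however, that the paper does not give its own proof of this proposition: it is quoted as a result of~\cite{bel:91} and used as a black box (see the sentence immediately preceding the statement and the remark after it). So there is nothing in the paper to compare your argument against beyond the citation itself, and your write-up faithfully reconstructs that cited proof, including the correct identification of the one nontrivial ingredient, namely $\pi_*\Oh_Y(-aE)=\mI^a$ and $R^{j>0}\pi_*\Oh_Y(-aE)=0$ for the blow-up of a smooth center.
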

\begin{remark}
In~\cite{bel:91} there is proved a more precise bound in terms of the individual
degrees of the hypersurfaces which cut out $X$, but we do not need this here.
\end{remark}
We also use the following result of~\cite{mum:70}.
\begin{proposition}\label{prop:generation}
Each smooth irreducible closed projective variety of degree $D$ is
scheme-theoretically cut out by hypersurfaces of degree $D$.
\end{proposition}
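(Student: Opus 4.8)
The plan is to follow Mumford's original argument (\cite{mum:70}). Write $d:=\deg X$. Replacing $\proj^n$ by the linear span of $X$, we may assume $X$ is nondegenerate, which costs nothing since the span is again a projective space and $d$ is unchanged. I would prove, by induction on $m=\dim X$, the following pair of statements for every smooth closed subvariety $Z$ of degree~$d$ in a projective space $\proj^N$ — where in dimension $0$ we allow $Z$ to be a reduced set of $d$ points, the shape that arises from cutting a curve by a general hyperplane: \textbf{(N)} $H^1(\proj^N,\mI_Z(k))=0$ for all $k\ge d-1$; and \textbf{(G)} $\mI_Z(d)$ is globally generated, which is precisely the assertion that $Z$ is scheme-theoretically cut out by hypersurfaces of degree~$d$.

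\emph{Base case.} Let $\Gamma=\{p_1,\dots,p_d\}$ consist of $d$ distinct points and fix an index~$i$. For each $j\ne i$ pick a linear form $\ell_j$ with $\ell_j(p_j)=0$, $\ell_j(p_i)\ne0$, and multiply the $d-1$ forms $\ell_j$ by $k-(d-1)\ge0$ further linear forms not vanishing at~$p_i$: this is a degree-$k$ form in $\mI_\Gamma$ vanishing on $\Gamma\setminus\{p_i\}$ but not at~$p_i$. Hence $\Gamma$ imposes independent conditions on forms of degree $k\ge d-1$, giving~(N), and the case $k=d$ gives~(G).

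\emph{Inductive step.} Let $X\subseteq\proj^n$ be smooth and nondegenerate of dimension $m\ge1$ and degree~$d$, and let $H$ be a general hyperplane. Then $Y:=X\cap H$ is smooth of dimension $m-1$, of degree~$d$, and nondegenerate in $H\simeq\proj^{n-1}$, so (N) and (G) hold for~$Y$ by induction (for $m=1$, by the base case). Since $H$ contains no component of the reduced scheme~$X$, a linear equation~$h$ of~$H$ is a nonzerodivisor on~$\Oh_X$, whence $\mathrm{Tor}_1(\Oh_X,\Oh_H)=0$ and, for every~$k$, an exact sheaf sequence
$$
0\longrightarrow\mI_X(k-1)\stackrel{\cdot h}{\longrightarrow}\mI_X(k)\longrightarrow\mI_{Y/H}(k)\longrightarrow0,
$$
whose cokernel $\mI_X\otimes\Oh_H$ is identified with the ideal sheaf $\mI_{Y/H}$ of~$Y$ in~$H$. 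Granting for the moment that (N) holds for~$X$, one has $H^1(\mI_X(d-1))=0$, so the restriction $H^0(\proj^n,\mI_X(d))\to H^0(H,\mI_{Y/H}(d))$ is surjective. Given any point~$p$, take $H$ general through~$p$; by (G) for~$Y$, finitely many sections of $\mI_{Y/H}(d)$ have germs generating $(\mI_{Y/H})_p$; lifting them to $\mI_X(d)$ yields sections whose images generate $(\mI_X/h\,\mI_X)_p$, hence — by Nakayama's lemma, using $h\in\mathfrak{m}_p$ — generate $(\mI_X)_p$. Thus (G) holds for~$X$.

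Everything therefore reduces to propagating (N): proving $H^1(\proj^n,\mI_X(k))=0$ for $k\ge d-1$ from the corresponding vanishing for~$Y$. This is not a formal bootstrap in~$k$ — the exact sequence only supplies the surjections $H^1(\mI_X(k-1))\twoheadrightarrow H^1(\mI_X(k))$, which point the wrong way. One instead shows that the restrictions $H^0(\mI_X(k))\to H^0(\mI_{Y/H}(k))$ are \emph{surjective} for $k\ge d-1$; then the connecting maps vanish, the maps $H^1(\mI_X(k-1))\to H^1(\mI_X(k))$ become injective, and since the right-hand side is $0$ for $k\gg0$ by Serre vanishing, (N) follows down to $k=d-1$. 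Establishing that surjectivity is exactly where one uses the classical result of Castelnuovo that the general hyperplane section of a nondegenerate irreducible variety lies in uniform — in particular, linearly general — position, which forces it to impose the expected number of conditions. This general-position input, rather than any of the homological formalism, is the substantive content of the proof and the step I expect to be the main obstacle to a self-contained write-up.
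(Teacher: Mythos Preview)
The paper does not prove this proposition; it is quoted as a result of Mumford~\cite{mum:70} and used as a black box in deriving Corollary~\ref{cor:regIdealSheaf}. There is therefore no proof in the paper to compare against.

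Your sketch follows Mumford's strategy and the inductive architecture --- base case, hyperplane exact sequence, reduction to~(N) --- is set up correctly. One point deserves sharpening. You invoke Castelnuovo's uniform-position lemma to obtain the surjectivity of $H^0(\mI_X(k))\to H^0(\mI_{Y/H}(k))$, but as written this is circular: by the long exact sequence, that surjectivity is \emph{equivalent} to the injectivity of $H^1(\mI_X(k-1))\to H^1(\mI_X(k))$, which is precisely what you are trying to establish. Castelnuovo's lemma is a statement about the hyperplane section~$Y$ (for curves: the points of a general section lie in linearly general position), and what it delivers is your base case and hence (N) for~$Y$; it says nothing directly about how sections over~$X$ restrict. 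Note, by contrast, that the vanishings $H^i(\mI_X(d-i))=0$ for $i\ge 2$ \emph{are} a formal bootstrap from (N) for~$Y$: once $H^{i-1}(\mI_{Y/H}(k))=0$ for $k\ge d-i+1$, the maps $H^i(\mI_X(k-1))\hookrightarrow H^i(\mI_X(k))$ are injective and Serre vanishing for large~$k$ pulls the zero back. So the entire proposition reduces to the single vanishing $H^1(\mI_X(d-1))=0$, which for curves Mumford handles by a separate argument on multiplication maps rather than by general position alone. Your instinct that this is the substantive step is right; only the attribution of its resolution is slightly misplaced.
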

\begin{corollary}\label{cor:regIdealSheaf}
Let $\mI$ be the ideal sheaf  of a smooth irreducible projective variety~$X$ in $\proj^n$ of degree $D$ and codimension $e$. Then
$$
\reg(\mI^a)\le (a+e-1)D-e+1.
$$
\end{corollary}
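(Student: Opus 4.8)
The plan is to feed the two cited results into the standard dimension-reduction argument for ideal-sheaf regularity, exactly as in Remark~\ref{rem:shortExactSequ}(iv). First I would apply Proposition~\ref{prop:generation}: since $X$ is smooth irreducible of degree $D$, it is scheme-theoretically cut out by hypersurfaces of degree $D$, so Proposition~\ref{prop:vanishing} applies and gives
$$
H^i(\proj^n,\mI^a(k))=0\qquad\text{for all }i>0\text{ and }k\ge (a+e-1)D-n .
$$
Set $r:=(a+e-1)D-e+1$ and $m:=\dim X=n-e$; the aim is to show that $\mI^a$ is $r$-regular, that is, $H^i(\proj^n,\mI^a(r-i))=0$ for every $i>0$. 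One may assume $a\ge 1$, the case $a=0$ being the trivial bound $\reg(\Oh_{\proj^n})=0\le (e-1)(D-1)$.

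For the indices $1\le i\le m+1$ the vanishing is immediate from the display above, because $r-i\ge r-(m+1)=(a+e-1)D-n$, using $m=n-e$. This single inequality is where the sharp summand $-e+1$ enters, and it also explains why one cannot simply invoke Proposition~\ref{prop:vanishing} for all $i$ at once: doing so would only yield $\reg(\mI^a)\le (a+e-1)D$. For the remaining indices $i>m+1$ I would show that the cohomology vanishes with no hypothesis on the twist. Let $X_a\subseteq\proj^n$ be the closed subscheme cut out by $\mI^a$; it has dimension $m$ since $\mI^a$ and $\mI$ have the same radical. Twisting the short exact sequence $0\to\mI^a\to\Oh_{\proj^n}\to\Oh_{X_a}\to 0$ by $\Oh_{\proj^n}(r-i)$ and passing to the long exact cohomology sequence sandwiches $H^i(\proj^n,\mI^a(r-i))$ between $H^{i-1}(\proj^n,\Oh_{X_a}(r-i))$ and $H^i(\proj^n,\Oh_{\proj^n}(r-i))$. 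The first group vanishes by Grothendieck vanishing (as used in Remark~\ref{rem:shortExactSequ}(iv)) since $i-1>m=\dim X_a$, and the second vanishes because $\Oh_{\proj^n}$ is $0$-regular by Example~\ref{ex:regPn}(i), hence $r$-regular as $r\ge 0$, by Remark~\ref{rem:shortExactSequ}(i).

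Combining the two ranges shows $\mI^a$ is $r$-regular, so $\reg(\mI^a)\le (a+e-1)D-e+1$. I expect the only point requiring care to be the bookkeeping in the case split — in particular checking that the thickening $X_a$ still has dimension $m$, and that $r\ge 0$ so that the $0$-regularity of $\Oh_{\proj^n}$ can be promoted to $r$-regularity — rather than any substantial new argument, since all of the geometric content is already packaged inside Propositions~\ref{prop:generation} and~\ref{prop:vanishing}.
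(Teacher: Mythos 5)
Your proof is correct and takes essentially the same route as the paper: apply Proposition~\ref{prop:generation} to invoke Proposition~\ref{prop:vanishing}, then use the content of Remark~\ref{rem:shortExactSequ}(iv) (via the observation that $\mI^a$ defines a subscheme $X_a$ of dimension $m=n-e$) to restrict the regularity check to $i\le m+1$, which is exactly where the improvement from $(a+e-1)D$ to $(a+e-1)D-e+1$ comes from. The only difference is that you unpack the short-exact-sequence and Grothendieck-vanishing argument behind Remark~\ref{rem:shortExactSequ}(iv) inline rather than citing it as a black box.
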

\begin{proof}
This follows immediately from Propositions~\ref{prop:vanishing} and~\ref{prop:generation}
together with part (iv) of Remark~\ref{rem:shortExactSequ}
(note that~$\mI^a$ is the ideal sheaf of some
subscheme of the same dimension as $X$). 
\end{proof}

Before we prove Theorem~\ref{thm:boundReg}, let us first gather some basic
properties of regularity. In the following one can always assume~$X$ to be
irreducible. The next two lemmas are a version of a well known technique to
characterize regularity by free resolutions.
\begin{lemma}\label{lem:boundRes}
Let
$$
\mF_N\rightarrow\mF_{N-1}\rightarrow\cdots\rightarrow\mF_0\rightarrow\mF\rightarrow 0
$$
be an exact sequence of coherent sheaves on $X$, where $N+1\ge\dim X=:m$. Then
$$
\reg(\mF)\le\max\{\reg(\mF_0),\reg(\mF_1)-1,\ldots,\reg(\mF_{m-1})-m+1\}.
$$
\end{lemma}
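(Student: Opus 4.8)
The plan is to break the long exact sequence into short exact sequences and then iterate the regularity estimates from Remark~\ref{rem:shortExactSequ}(ii). First I would introduce the kernel/cokernel sheaves $\mK_j:=\ker(\mF_{j-1}\to\mF_{j-2})$ for $j\ge 1$, with the convention $\mF_{-1}:=\mF$, so that $\mK_1=\ker(\mF_0\to\mF)$. Exactness of the given complex yields, for each $j\ge 0$, a short exact sequence
$$
0\longrightarrow\mK_{j+1}\longrightarrow\mF_j\longrightarrow\mK_j\longrightarrow 0,
$$
where $\mK_0:=\mF$ and, at the far end, $\mK_{N+1}=0$ (so $\mF_N\twoheadrightarrow\mK_N$ identifies $\mK_N$ as a quotient of $\mF_N$, and in fact the sequence $0\to\mK_{N+1}\to\mF_N\to\mK_N\to0$ degenerates appropriately). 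Applying the first inequality of Remark~\ref{rem:shortExactSequ}(ii) to the $j$-th of these sequences gives
$$
\reg(\mK_j)\le\max\{\reg(\mK_{j+1})-1,\ \reg(\mF_j)\}.
$$

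Next I would iterate this recursion starting from $j=0$. Unwinding it, one obtains
$$
\reg(\mF)=\reg(\mK_0)\le\max\{\reg(\mF_0),\ \reg(\mF_1)-1,\ \ldots,\ \reg(\mF_j)-j,\ \reg(\mK_{j+1})-(j+1)\}
$$
for every $j\ge 0$; this is a straightforward induction on $j$, feeding each new short exact sequence into the bound for $\reg(\mK_{j})$. The point now is to stop the recursion as soon as the tail term $\reg(\mK_{j+1})-(j+1)$ can be discarded. Since $\dim X=m$ and every coherent sheaf on $X$ vanishes in cohomological degrees $>m$ by~\cite[III, Theorem 2.7]{hart:77}, any coherent sheaf on $X$ is $(\ell)$-regular once the only surviving constraints $H^i(X,\mathcal{G}(\ell-i))=0$ have $i>m$—that is, $\reg(\mathcal{G})\le m$ for \emph{every} coherent sheaf $\mathcal{G}$ on $X$ (indeed the regularity is at most $m$ always, since for $i>m$ the group $H^i$ is automatically zero, and... more carefully: $\reg(\mathcal{G})\le\dim(\supp\mathcal{G})\le m$). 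Taking $j=m-1$, the tail term becomes $\reg(\mK_m)-m\le m-m=0\le\reg(\mF_0)$, so it is absorbed by the first term of the max. Here I use the hypothesis $N+1\ge m$ to guarantee that the sheaves $\mK_1,\ldots,\mK_m$ genuinely arise from the given resolution (i.e. that the recursion can be run for $m-1$ steps before the complex terminates). This yields exactly
$$
\reg(\mF)\le\max\{\reg(\mF_0),\ \reg(\mF_1)-1,\ \ldots,\ \reg(\mF_{m-1})-m+1\},
$$
as claimed.

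The main obstacle—really the only subtle point—is the bookkeeping at the tail: one must be careful that the induction is legitimately run for $m-1$ steps, which is where $N+1\ge m$ enters, and one must invoke the vanishing $H^i(X,\mathcal{G})=0$ for $i>m=\dim X$ to kill the leftover syzygy term $\reg(\mK_m)-m$. Everything else is a mechanical iteration of the two-term regularity inequality from the short exact sequence long cohomology sequence, so I would not belabor those routine steps.
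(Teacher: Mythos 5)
Your skeleton is the right one and matches the paper's intent (which simply says "follows easily by chasing through the complex," citing Lazarsfeld's Prop.~B.1.2 and the vanishing $H^i(X,\mathcal{G})=0$ for $i>m$). But the way you kill the tail term contains a genuine error. You claim ``$\reg(\mathcal{G})\le\dim(\supp\mathcal{G})\le m$ for every coherent sheaf $\mathcal{G}$ on $X$.'' This is false: regularity is not bounded by dimension. For instance $\Oh_{\proj^n}(-k)$ has regularity $k$, and Example~\ref{ex:regPn}(ii) of this very paper shows $\reg(\Oh_X)=D-1$ for a hypersurface $X\subset\proj^n$ of degree $D$, which is arbitrarily large while $\dim X=n-1$ is fixed. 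The vanishing $H^i(X,\mathcal{G})=0$ for $i>m$ only says that the defining conditions for $k$-regularity with index $i>m$ are automatically satisfied; the conditions for $1\le i\le m$ can still fail for every $k$ up to any prescribed bound. (Your fallback ``$\reg(\mK_m)-m\le 0\le\reg(\mF_0)$'' also silently assumes $\reg(\mF_0)\ge 0$, which need not hold.)

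The fix is to not try to bound $\reg(\mK_m)$ at all, because iterating Remark~\ref{rem:shortExactSequ}(ii) is too coarse: it asks for $(k-j)$-regularity of each $\mK_j$, whereas the cohomology chase only needs one specific vanishing at each stage. Concretely, set $k:=\max\{\reg(\mF_j)-j:0\le j\le m-1\}$ and fix $i\ge 1$. Twisting the $j$-th short exact sequence $0\to\mK_{j+1}\to\mF_j\to\mK_j\to 0$ by $\Oh_X(k-i)$ and taking the long exact sequence, the vanishing $H^{i+j}(\mF_j(k-i))=0$ (which holds because $\reg(\mF_j)\le k+j$, so $\mF_j$ is $(k+j)$-regular and $k-i=(k+j)-(i+j)$) reduces $H^{i+j}(\mK_j(k-i))=0$ to $H^{i+j+1}(\mK_{j+1}(k-i))=0$. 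Running this from $j=0$ to $j=m-1$ (possible since $N\ge m-1$) reduces $H^i(\mF(k-i))=0$ to $H^{i+m}(\mK_m(k-i))=0$, which is automatic because $i+m>m=\dim X$. No bound on $\reg(\mK_m)$ is used, and none is available.
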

\begin{proof}
This follows easily by chasing through the complex~\cite[Proposition B.1.2]{lazf:04},
taking into account that $H^i(X,\mF)=0$ for all $i>m$ and any coherent sheaf~$\mF$.
Another proof is given in~\cite[Lemma 3.9]{ara:04}.
\end{proof}
For a finite dimensional vector space $V$ and a coherent sheaf $\mF$ on $X$
we denote by $V\otimes\mF$ the sheaf $U\mapsto V\otimes_{\C}\mF(U)$. If
$v_1,\ldots,v_N$ is a basis of $V$, then $V\otimes\mF=\bigoplus_{i=1}^N
v_i\otimes\mF$.
The following lemma is proved as Corollary 3.2 in~\cite{ara:04}. Set 
$\Reg(X):=\max\{1,\reg(\Oh_X)\}$. 
\begin{lemma}\label{lem:freeRes}
Let $\mF$ be a $k$-regular coherent sheaf on $X$. Then there exist finite
dimensional vector spaces $V_i$ and an exact sequence
\begin{equation}\label{eq:freeRes}
\cdots\rightarrow V_i\otimes\Oh_X(-k-iR)\rightarrow\cdots\rightarrow V_1\otimes
\Oh_X(-k-R)\rightarrow V_0\otimes\Oh_X(-k)\rightarrow\mF\rightarrow 0,
\end{equation}
where $R:=\Reg(X)$.
\end{lemma}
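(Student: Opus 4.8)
The plan is to build the resolution~\eqref{eq:freeRes} inductively, one term at a time, so that at each stage a $j$-regular sheaf gets replaced by a syzygy sheaf that is again regular, with the regularity growing by exactly $R=\Reg(X)$ per step. Write $\mF^{(0)}:=\mF$ and $k_i:=k+iR$. Supposing inductively that $\mF^{(i)}$ is $k_i$-regular, I would first show that $\mF^{(i)}(k_i)$ is globally generated, set $V_i:=\G(X,\mF^{(i)}(k_i))$ --- a finite dimensional vector space, since $X$ is projective and $\mF^{(i)}(k_i)$ is coherent --- and let $\mF^{(i+1)}$ be the kernel of the evaluation map, so that
$$0\longrightarrow\mF^{(i+1)}\longrightarrow V_i\otimes\Oh_X(-k_i)\longrightarrow\mF^{(i)}\longrightarrow 0$$
is exact. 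Splicing these short exact sequences for $i=0,1,2,\dots$ produces precisely the sequence~\eqref{eq:freeRes}, so everything reduces to the inductive claim that $\mF^{(i+1)}$ is $k_{i+1}$-regular.

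Two standard consequences of regularity (Mumford's lemma) feed the argument, and I would transport both from $\proj^n$ to $X$ via the closed embedding $i\colon X\into\proj^n$. Since $i_*\mF^{(i)}$ is $k_i$-regular on $\proj^n$ by part~(iii) of Remark~\ref{rem:shortExactSequ}, Mumford's lemma on $\proj^n$ gives: (a) $i_*\bigl(\mF^{(i)}(k_i)\bigr)=(i_*\mF^{(i)})(k_i)$ is globally generated; and (b) the maps $\G(\proj^n,(i_*\mF^{(i)})(\ell))\otimes\G(\proj^n,\Oh_{\proj^n}(1))\to\G(\proj^n,(i_*\mF^{(i)})(\ell+1))$ are surjective for $\ell\ge k_i$. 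Because global sections over $\proj^n$ of $i_*$ of a sheaf agree with its global sections over $X$, and because $i^*\Oh_{\proj^n}(1)=\Oh_X(1)$, statement (a) says $\mF^{(i)}(k_i)$ is globally generated on $X$ (which is exactly what the surjection above needs), while (b) --- after composing with the restriction $\G(\proj^n,\Oh_{\proj^n}(1))\to\G(X,\Oh_X(1))$ --- says that $\G(X,\mF^{(i)}(\ell))\otimes\G(X,\Oh_X(1))\to\G(X,\mF^{(i)}(\ell+1))$ is surjective for $\ell\ge k_i$.

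For the inductive claim I would run the long exact cohomology sequence of the displayed short exact sequence, twisted by $k_{i+1}-j$, and check $H^j(X,\mF^{(i+1)}(k_{i+1}-j))=0$ for every $j>0$. The middle term equals $V_i\otimes H^j(X,\Oh_X(R-j))$, which vanishes because $\Oh_X$ is $R$-regular (this is where $R\ge\reg(\Oh_X)$ enters). The neighbouring term on the left is $H^{j-1}(X,\mF^{(i)}(k_{i+1}-j))$; for $j\ge 2$ it vanishes by $k_i$-regularity of $\mF^{(i)}$, using $R\ge 1$ so that $k_{i+1}-j\ge k_i-(j-1)$, and sandwiching gives the desired vanishing. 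The case $j=1$ is the delicate one: the long exact sequence (together with $H^1(X,\Oh_X(R-1))=0$, again from $R$-regularity of $\Oh_X$) identifies $H^1(X,\mF^{(i+1)}(k_{i+1}-1))$ with the cokernel of the multiplication map $\G(X,\mF^{(i)}(k_i))\otimes\G(X,\Oh_X(R-1))\to\G(X,\mF^{(i)}(k_i+R-1))$, where I used $V_i=\G(X,\mF^{(i)}(k_i))$. Iterating ingredient (b) a total of $R-1$ times shows that the composite $\G(X,\mF^{(i)}(k_i))\otimes\G(X,\Oh_X(1))^{\otimes(R-1)}\to\G(X,\mF^{(i)}(k_i+R-1))$ is surjective, and since it factors through the displayed multiplication map via $\G(X,\Oh_X(1))^{\otimes(R-1)}\to\G(X,\Oh_X(R-1))$, that map is surjective too; hence $H^1$ vanishes and the induction closes.

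The main obstacle is exactly this last point: controlling $H^1$ of the syzygy sheaf, which is the only place where global generation alone does not suffice and one genuinely needs surjectivity of the multiplication of sections --- and it is precisely this that forces the twist to drop by the full $R=\max\{1,\reg(\Oh_X)\}$ at each step rather than by $1$ as one would have on $\proj^n$. (Alternatively one may simply invoke Corollary~3.2 of~\cite{ara:04}, but the argument sketched here is self-contained given Mumford's basic regularity lemma and the properties recalled in Remark~\ref{rem:shortExactSequ}.)
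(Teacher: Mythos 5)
Your proof is correct and is essentially a self-contained reconstruction of the argument the paper defers to: the paper offers no proof beyond the reference to Corollary~3.2 of~\cite{ara:04}, and what you describe---the Mumford-style induction on syzygy sheaves, global generation transported through the closed embedding into $\proj^n$, and the surjectivity of multiplication of sections to kill $H^1$ of the kernel---is precisely the standard proof of that cited result. Your careful treatment of the $j=1$ case, which is where $R=\max\{1,\reg(\Oh_X)\}$ (rather than a step of $1$) is genuinely needed, is the one nontrivial point and you have it right.
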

Using this we prove a bound on the regularity of tensor products.
\begin{proposition}\label{prop:tensorProd}
Let $\mF,\mG$ be coherent sheaves on $X$, where $\mG$ is locally free, and
denote $m:=\dim X$ and $R:=\Reg(X)$ as above. Then
$$
\reg(\mF\otimes\mG)\le \reg(\mF)+\reg(\mG)+(m-1)(R-1).
$$
\end{proposition}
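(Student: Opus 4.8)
The plan is to use the free resolution from Lemma~\ref{lem:freeRes} applied to $\mF$, tensor it with the locally free sheaf $\mG$, and then estimate the regularity of the resulting complex via Lemma~\ref{lem:boundRes}. First I would let $k:=\reg(\mF)$ and $R:=\Reg(X)$, and invoke Lemma~\ref{lem:freeRes} to obtain an exact sequence
\begin{equation*}
\cdots\rightarrow V_i\otimes\Oh_X(-k-iR)\rightarrow\cdots\rightarrow V_0\otimes\Oh_X(-k)\rightarrow\mF\rightarrow 0.
\end{equation*}
Since $\mG$ is locally free, tensoring with $\mG$ preserves exactness, so
\begin{equation*}
\cdots\rightarrow V_i\otimes\mG(-k-iR)\rightarrow\cdots\rightarrow V_0\otimes\mG(-k)\rightarrow\mF\otimes\mG\rightarrow 0
\end{equation*}
is still exact. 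Here I use that $V_i\otimes\Oh_X(-k-iR)\otimes\mG\simeq V_i\otimes\mG(-k-iR)$ and that a finite direct sum of copies of a sheaf has the same regularity as that sheaf.

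The next step is to apply Lemma~\ref{lem:boundRes} to this resolution of $\mF\otimes\mG$. That lemma gives
\begin{equation*}
\reg(\mF\otimes\mG)\le\max_{0\le i\le m-1}\bigl\{\reg\bigl(V_i\otimes\mG(-k-iR)\bigr)-i\bigr\}.
\end{equation*}
Now $\reg(V_i\otimes\mG(-k-iR))=\reg(\mG(-k-iR))=\reg(\mG)+k+iR$, using that twisting by $\Oh_X(\ell)$ shifts regularity by $-\ell$ (a direct consequence of the definition, since $\mF(\ell)(m-i)=\mF((\ell+m-i)-i+i)$ — more simply, $\reg(\mG(\ell))=\reg(\mG)-\ell$). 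Substituting, the bound becomes
\begin{equation*}
\reg(\mF\otimes\mG)\le\max_{0\le i\le m-1}\bigl\{\reg(\mG)+k+iR-i\bigr\}=\reg(\mG)+k+(m-1)(R-1),
\end{equation*}
since $R\ge 1$ means the coefficient $R-1\ge 0$ of $i$ is maximized at $i=m-1$. Recalling $k=\reg(\mF)$ yields exactly the claimed inequality.

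The only genuine subtlety — and the step I would be most careful about — is the interface between Lemmas~\ref{lem:freeRes} and~\ref{lem:boundRes}: Lemma~\ref{lem:freeRes} produces a (possibly infinite) resolution, but Lemma~\ref{lem:boundRes} only needs the terms $\mF_0,\dots,\mF_{m-1}$ (equivalently, any resolution of length $\ge m-1$ suffices, since truncating a resolution at stage $N\ge m-1$ leaves an exact complex ending in a cokernel whose higher cohomology past degree $m$ vanishes). So I would simply truncate the tensored resolution at stage $m-1$ and feed the first $m$ terms into Lemma~\ref{lem:boundRes}; all terms $\mF_i$ with $i\ge m-1$ are irrelevant to the estimate. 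Everything else is bookkeeping with the two elementary facts $\reg(\mG(\ell))=\reg(\mG)-\ell$ and $\reg(\bigoplus\mG)=\reg(\mG)$, which follow directly from the definition of $k$-regularity and the additivity of sheaf cohomology over finite direct sums.
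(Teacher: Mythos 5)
Your proof is correct and follows exactly the same strategy as the paper: invoke Lemma~\ref{lem:freeRes} to resolve $\mF$, tensor with the locally free $\mG$ (preserving exactness), and feed the result into Lemma~\ref{lem:boundRes}, then compute the maximum over $i$ using $R\ge 1$. The only difference is that you spell out the twist/direct-sum bookkeeping and the truncation of the infinite resolution to length $m$ explicitly, whereas the paper leaves those as implicit one-liners.
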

\begin{proof}
The proof parallels the one of the special case $X=\proj^n$~\cite[Proposition 1.8.9]{lazf:04}.
Let $k:=\reg(\mF)$, and consider the resolution~\eqref{eq:freeRes} of $\mF$,
which exists according to Lemma~\ref{lem:freeRes}. Tensoring with $\mG$
yields
$$
\cdots\rightarrow V_i\otimes\mG(-k-iR)\rightarrow\cdots\rightarrow V_1\otimes
\mG(-k-R)\rightarrow V_0\otimes\mG(-k)\rightarrow\mF\otimes\mG\rightarrow 0.
$$
Since tensoring with a locally free sheaf is an exact functor, this sequence
is exact. Furthermore, $\reg(V_i\otimes\mG(-k-iR))\le k+iR+\reg(\mG)$, hence
$\reg(\mF\otimes\mG)\le k+\reg(\mG)+(m-1)(R-1)$ by Lemma~\ref{lem:boundRes}.
\end{proof}

\begin{corollary}\label{cor:regExterior}
Let $\mF$ be a locally free sheaf on $X$. Then for $p>0$
$$
\reg(\Lambda^p \mF)\le p\cdot\reg(\mF)+(p-1)(m-1)(R-1).
$$
\end{corollary}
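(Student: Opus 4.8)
The plan is to deduce the bound on $\reg(\Lambda^p\mF)$ from the bound on tensor powers (Proposition~\ref{prop:tensorProd}) together with the observation that $\Lambda^p\mF$ is a direct summand, or at least a quotient, of the $p$-fold tensor power $\mF^{\otimes p}$ when $\mF$ is locally free. First I would note that since $\mF$ is locally free, so is each tensor power $\mF^{\otimes j}$, and the exterior power $\Lambda^p\mF$ is locally free as well. Over a field of characteristic zero, the antisymmetrization map exhibits $\Lambda^p\mF$ as a direct summand of $\mF^{\otimes p}$ via a split surjection $\mF^{\otimes p}\twoheadrightarrow\Lambda^p\mF$; in particular there is a short exact sequence $0\to\mK\to\mF^{\otimes p}\to\Lambda^p\mF\to 0$ of locally free sheaves that is locally split. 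Applying part (ii) of Remark~\ref{rem:shortExactSequ} gives $\reg(\Lambda^p\mF)\le\max\{\reg(\mF^{\otimes p}),\reg(\mK)\}$; since the sequence splits, $\mF^{\otimes p}\simeq\mK\oplus\Lambda^p\mF$, so actually $\reg(\mF^{\otimes p})=\max\{\reg(\mK),\reg(\Lambda^p\mF)\}$ and in particular $\reg(\Lambda^p\mF)\le\reg(\mF^{\otimes p})$.

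It then remains to bound $\reg(\mF^{\otimes p})$, which I would do by induction on $p$ using Proposition~\ref{prop:tensorProd}. The base case $p=1$ is trivial. For the inductive step, writing $\mF^{\otimes p}=\mF^{\otimes(p-1)}\otimes\mF$ with $\mF$ (hence $\mF^{\otimes(p-1)}$) locally free, Proposition~\ref{prop:tensorProd} yields
$$
\reg(\mF^{\otimes p})\le\reg(\mF^{\otimes(p-1)})+\reg(\mF)+(m-1)(R-1).
$$
Unwinding the recursion over $p-1$ steps gives $\reg(\mF^{\otimes p})\le p\cdot\reg(\mF)+(p-1)(m-1)(R-1)$, which is exactly the claimed bound. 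Combining this with $\reg(\Lambda^p\mF)\le\reg(\mF^{\otimes p})$ finishes the proof.

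The only genuine subtlety is the reduction from exterior powers to tensor powers, i.e.\ the claim that $\Lambda^p\mF$ is a direct summand of $\mF^{\otimes p}$ in the category of $\Oh_X$-modules. This is where characteristic zero is used: the idempotent $\frac{1}{p!}\sum_{\sigma\in S_p}\mathrm{sgn}(\sigma)\,\sigma$ acting on $\mF^{\otimes p}$ is defined because $p!$ is invertible, and its image is canonically $\Lambda^p\mF$. I expect this to be the main point to get right; everything else is a routine induction. One should double-check that the splitting is compatible with the twists $\Oh_X(k)$ used in the definition of regularity, but since the idempotent is $\Oh_X$-linear, tensoring with $\Oh_X(k-i)$ and taking cohomology respects the direct sum decomposition, so the regularity inequality passes through without difficulty.
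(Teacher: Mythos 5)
Your proof is correct and follows essentially the same route as the paper: bound $\reg(\mF^{\otimes p})$ by iterating Proposition~\ref{prop:tensorProd}, then pass to $\Lambda^p\mF$ as a direct summand of $\mF^{\otimes p}$ (the paper cites Bourbaki for this; your explicit antisymmetrization idempotent over characteristic zero is the same fact). The brief detour through Remark~\ref{rem:shortExactSequ}~(ii) is unnecessary once you note the splitting gives $\reg(\mF^{\otimes p})=\max\{\reg(\mK),\reg(\Lambda^p\mF)\}$ directly, but you do arrive at that observation, so the argument is sound.
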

\begin{proof}
The same bound for the $p$-th tensor power of $\mF$ clearly follows from
Proposition~\ref{prop:tensorProd}. Since the exterior power is a direct
summand of the tensor power~\cite[III,~\S7.4]{bou:74}, this implies the claim.
\end{proof}

\begin{proposition}\label{prop:reg1Forms}
Let $X\subseteq\proj^n$ be a smooth irreducible subvariety of codimension~$e$
and degree at most $D\ge 2$. Then
$$
\reg(\Om_X)\le (e+1)D-e.
$$
\end{proposition}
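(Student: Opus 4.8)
The plan is to reduce the statement to the bound on the regularity of the ideal sheaf of $X$ provided by Corollary~\ref{cor:regIdealSheaf}, via the conormal exact sequence. Since $X$ is smooth, the second fundamental exact sequence of K\"ahler differentials
$$
0\longrightarrow\mI_X/\mI_X^2\longrightarrow i^*\Om_{\proj^n}\longrightarrow\Om_X\longrightarrow 0
$$
is exact, where $i\colon X\hookrightarrow\proj^n$ denotes the inclusion. Pushing this sequence forward along $i$ and applying parts (ii) and (iii) of Remark~\ref{rem:shortExactSequ}, I obtain
$$
\reg(\Om_X)\le\max\{\reg(\mI_X/\mI_X^2)-1,\ \reg(i^*\Om_{\proj^n})\},
$$
so it suffices to bound the two regularities on the right hand side.

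For the conormal sheaf I would use the short exact sequence $0\to\mI_X^2\to\mI_X\to\mI_X/\mI_X^2\to 0$ of sheaves on $\proj^n$. Corollary~\ref{cor:regIdealSheaf} gives $\reg(\mI_X)\le eD-e+1$ and $\reg(\mI_X^2)\le(e+1)D-e+1$, so part (ii) of Remark~\ref{rem:shortExactSequ} yields
$$
\reg(\mI_X/\mI_X^2)\le\max\{(e+1)D-e,\ eD-e+1\}=(e+1)D-e,
$$
the last equality because $D\ge 2$.

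For $i^*\Om_{\proj^n}$ I would tensor the structure sequence $0\to\mI_X\to\Oh_{\proj^n}\to i_*\Oh_X\to 0$ with the locally free sheaf $\Om_{\proj^n}$; by the projection formula this gives the exact sequence $0\to\mI_X\otimes\Om_{\proj^n}\to\Om_{\proj^n}\to i_*i^*\Om_{\proj^n}\to 0$ on $\proj^n$. Since $\Reg(\proj^n)=1$, the correction term in Proposition~\ref{prop:tensorProd} vanishes, and together with $\reg(\Om_{\proj^n})=2$ from Example~\ref{ex:regPn}(iii) it gives $\reg(\mI_X\otimes\Om_{\proj^n})\le\reg(\mI_X)+2\le eD-e+3$. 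Now parts (ii) and (iii) of Remark~\ref{rem:shortExactSequ} yield $\reg(i^*\Om_{\proj^n})\le\max\{eD-e+2,\ 2\}=eD-e+2$, using $e\ge 1$ and $D\ge 2$. Combining the two estimates,
$$
\reg(\Om_X)\le\max\{(e+1)D-e-1,\ eD-e+2\}\le(e+1)D-e,
$$
where the final inequality again uses $D\ge 2$.

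All computations here are routine; the one point deserving care is that the regularity of $i^*\Om_{\proj^n}$ cannot be read off directly from $\reg(\Om_{\proj^n})=2$, because restriction to a subvariety does not in general preserve regularity, which is precisely why the detour through $\mI_X\otimes\Om_{\proj^n}$ and the tensor-product bound of Proposition~\ref{prop:tensorProd} is necessary. I also note that the hypothesis $D\ge 2$ enters exactly in keeping the contribution $eD-e+2$ of $i^*\Om_{\proj^n}$ below $(e+1)D-e$; for $D=1$, i.e., when $X$ is a linear subspace $\proj^m\subseteq\proj^n$, the asserted bound fails since then $\reg(\Om_X)=2>(e+1)\cdot 1-e=1$.
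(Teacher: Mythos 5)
Your proof is correct and follows essentially the same route as the paper: conormal sequence, regularity of $\mI_X$ and $\mI_X^2$ via Corollary~\ref{cor:regIdealSheaf}, then the tensor-product bound of Proposition~\ref{prop:tensorProd}. The only cosmetic difference is that the paper bounds $\reg(i^*\Om_{\proj^n})$ by applying Proposition~\ref{prop:tensorProd} directly to $i_*\Oh_X\otimes\Om_{\proj^n}$ (using $\reg(\Oh_X)\le eD-e$), whereas you first bound $\reg(\mI_X\otimes\Om_{\proj^n})$ and pass through the twisted structure sequence; both yield the same number $eD-e+2$.
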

\begin{proof}
Denote with $\mI$ the ideal sheaf of $X$ and
let $i\colon X\hookrightarrow \proj^n$ be the inclusion. 
Part (ii) of Remark~\ref{rem:shortExactSequ} applied to the exact sequence
$$
0\rightarrow\mI\rightarrow\Oh_{\proj^n}\rightarrow i_*\Oh_X\rightarrow 0
$$
implies $\reg(\Oh_X)=\reg(i_*\Oh_X)\le\max\{\reg(\mI)-1,\reg(\Oh_{\proj^n})\}$.
Using Corollary~\ref{cor:regIdealSheaf} and Example~\ref{ex:regPn} (i) we conclude 
\begin{equation}\label{eq:regOX}
\reg(\Oh_X)\le eD-e.
\end{equation}
Furthermore, from the exact sequence
$
0\rightarrow\mI^2\rightarrow\mI\rightarrow\mI / \mI^2\rightarrow 0
$
it follows 
\begin{equation}\label{eq:regConormal}
\reg(\mI/\mI^2)\le\max\{\reg(\mI),\reg(\mI^2)-1\}\le (e+1)D-e.
\end{equation}

The last exact sequence we consider is the conormal sequence
\begin{equation}\label{eq:conormal}
0\rightarrow\mI/\mI^2\rightarrow\Oh_X\otimes\Om_{\proj^n}\rightarrow\Om_X\rightarrow 0
\end{equation}
on $X$~\cite[II, Theorem 8.17]{hart:77}. More precisely, the sheaf in the middle is the inverse image 
$$
i^*\Om_{\proj^n}=\Oh_X\otimes_{i^{-1}\Oh_{\proj^n}} i^{-1}\Om_{\proj^n}
$$
of $\Om_{\proj^n}$ under $i$ as an $\Oh_X$-module.
The sheaf on the left is the restriction 
$i^{-1}\mI/\mI^2$, which is automatically an $\Oh_X$-module.

We want to push the sequence~\eqref{eq:conormal} forward to $\proj^n$.
In general, the direct image of sheaves does not commute with stalks. However,
for closed immersions it does: if $\mF$ is a sheaf on $X$, we have
$(i_*\mF)_x=\mF_x$ for $x\in X$, and $0$
otherwise~\cite[II, Exercise 1.19(a)]{hart:77}.
It follows that the functor $i_*$ is exact.

Furthermore, since $\Om_{\proj^n}$ is locally free of finite rank, the projection
formula shows $i_*i^*\Om_{\proj^n}=i_*\Oh_X\otimes_{\Oh_{\proj^n}}\Om_{\proj^n}$.
Since the stalk of $\mI/\mI^2$ at $x\notin X$ vanishes, 
we have $i_*i^{-1}\mI/\mI^2=\mI/\mI^2$.
Thus, applying $i_*$ to~\eqref{eq:conormal} yields the exact sequence
$$
0\rightarrow\mI/\mI^2\rightarrow i_*\Oh_X\otimes\Om_{\proj^n}\rightarrow i_*\Om_X\rightarrow 0.
$$
Hence, $\reg(\Om_X)=\reg(i_*\Om_X)\le\max\{\reg(i_*\Oh_X\otimes\Om_{\proj^n}),\reg(\mI/\mI^2)\}$.
Now Proposition~\ref{prop:tensorProd}, Example~\ref{ex:regPn} (iii),
and~\eqref{eq:regOX} imply
$\reg(i_*\Oh_X\otimes\Om_{\proj^n})\le\reg(\Oh_X)+\reg(\Om_{\proj^n})\le eD-e+2$,
so that $\reg(\Om_X)\le(e+1)D-e$ by~\eqref{eq:regConormal}.
\end{proof}
\begin{remark}
\begin{enumerate}
\item[(i)] In the case $D=1$ we have $\reg(\Om_X)\le 2$.
\item[(ii)] One can check that for a hypersurface $X\subseteq \proj^n$ of degree
$D\ge 3$ one has $\reg(\Om_X)=2D-2$, so that our bound is essentially sharp.
\end{enumerate}
\end{remark}

\noindent{\em Proof of Theorem~\ref{thm:boundReg}.}\quad
The claim can easily be checked for $D=1$.
Also, we can assume $X$ to be irreducible. The claim for $p=0$ is~\eqref{eq:regOX}.
For the case $p\ge 1$, Proposition~\ref{prop:reg1Forms} and
Corollary~\ref{cor:regExterior} imply
\begin{align*}
\reg(\Om_X^p) & \le p((e+1)D-e)+(p-1)(m-1)(e(D-1)-1)\\
 & < p\left((e+1)D-e+(m-1)e(D-1)\right)\\
 & < p(em+1)D. \text{\makebox[239pt][r]{\qedsymbol}}
\end{align*}

\section{Cohomology of Hypersurface Complements}
\subsection{Theory}
Let $X\subseteq\proj^n$ be a smooth closed subvariety.
Using our result on the Castelnuovo-Mumford regularity of the sheaf of
differential forms, one can compute the de Rham cohomology of certain
hypersurface complements in $X$ as the cohomology of finite dimensional
complexes.

To describe these complexes, 
let $H_0,\ldots,H_q\subseteq X$ be hyperplane sections and denote by
$U$ the complement of the hypersurface $V:=\bigcup_\nu H_\nu$ in $X$.
Assume that $V$ has normal crossings (see~\S\ref{ss:divisors}).
We also consider~$V$ as a divisor $V=\sum_\nu H_\nu=\sum_i m_i V_i$, where the
$V_i$ are the irreducible components of~$V$ (cf.~\S\ref{ss:divisors}).
Then, since $\Oh_X(H_\nu)\simeq\Oh_X(1)$, it follows that
\begin{equation}\label{eq:lineBundleHyperplSec}
\Oh_X(V)\simeq\bigotimes_\nu\Oh(H_\nu)\simeq\Oh_X(1)^{\otimes(q+1)}\simeq\Oh_X(q+1).
\end{equation}
Now let $A=\sum_i a_i V_i$ be any divisor with support in $V$, and let
$j\colon U\hookrightarrow X$ be the inclusion. Define the subsheaf
$\Om_X^p(A):=\Om_X^p\otimes\Oh_X(A)$ of $j_*\Om_U^p$, which consists of those
rational differential $p$-forms on $X$, which are regular on $U$ and have poles
(zeros if $a_i<0$) of order $|a_i|$ along $V_i$. Define the sheaves
$$
\mK_X^p(A):=\Om_X^p(A+pV).
$$
Note that $\ud(\mK_X^p(A))\subseteq\mK_X^{p+1}(A)$, so that $\mK_X^\bullet(A)$ 
is in fact a subcomplex of~$j_*\Om_U^\bullet$.
For~$A=V$ it is the zeroth term of the 
{\em polar filtration}~\cite{ded:90,dimc:92}.

The next lemma is the crucial fact that allows us to compute the algebraic de
Rham cohomology of $U$ by a finite dimensional complex. Its proof requires to 
consider holomorphic differential forms.
So let $\Om_{U^{\rm an}}^\bullet$ denote
the complex of holomorphic differential forms on $U^{\rm an}$ regarded as a
complex manifold, and let $\mK_{X^{\rm an}}^\bullet(A)$ be the holomorphic
version of $\mK_{X}^\bullet(A)$.
The following lemma is proved analogously to the corresponding
statement for the logarithmic complex (cf.~\cite{del:70,grh:78,voi:02}). 
The calculation can be found in~\cite[Lemma 4.1]{abg:73}.

\begin{lemma}\label{lem:quasiiso}
Let $A=\sum_i a_i V_i$ be a divisor with $a_i>0$ for all $i$, and assume that
$V$ has normal crossings. Then the inclusion
$\mK_{X^{\rm an}}^\bullet(A)\hookrightarrow j_*\Om_{U^{\rm an}}^\bullet$ is a
quasi-isomorphism.
\end{lemma}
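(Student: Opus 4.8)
The plan is to reduce the statement to a purely local computation in a polydisc, exactly as one does for the classical comparison between the logarithmic de Rham complex and $j_*\Om^\bullet_{U^{\rm an}}$. Since quasi-isomorphism is a statement about the cohomology sheaves $\mH^\bullet$, which can be checked on stalks, I would fix a point $x\in X^{\rm an}$ and distinguish two cases. If $x\notin V$, then near $x$ the inclusion $\mK^\bullet_{X^{\rm an}}(A)\hookrightarrow j_*\Om^\bullet_{U^{\rm an}}$ is an equality (both stalks are $\Om^\bullet_{X^{\rm an},x}$), so there is nothing to prove. If $x\in V$, then because $V$ has normal crossings and $X$ is smooth of dimension $m$, I may choose holomorphic coordinates $z_1,\ldots,z_m$ on a polydisc $\Delta$ around $x$ in which the components of $V$ passing through $x$ are the coordinate hyperplanes $\{z_1=0\},\ldots,\{z_k=0\}$; then $U\cap\Delta\cong(\Delta^*)^k\times\Delta^{m-k}$ and $A$ restricted to $\Delta$ is $\sum_{\nu=1}^k a_\nu\{z_\nu=0\}$ with all $a_\nu>0$.

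With these coordinates the statement becomes concrete: on $\Delta$ the complex $j_*\Om^\bullet_{U^{\rm an}}$ is the complex of holomorphic forms on $(\Delta^*)^k\times\Delta^{m-k}$, whose cohomology is the exterior algebra on $\tfrac{dz_1}{z_1},\ldots,\tfrac{dz_k}{z_k}$ (this is the standard computation of the cohomology of a punctured polydisc, e.g.\ via the K\"unneth formula and $H^\bullet(\Delta^*)=\C\oplus\C\tfrac{dz}{z}$), while $\mK^\bullet_{X^{\rm an}}(A)$ on $\Delta$ consists of forms $\om$ that are regular on $U\cap\Delta$ and have a pole of order at most $a_\nu+p$ along $z_\nu=0$ in degree $p$. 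The key point is then the algebraic lemma that the subcomplex of forms with such bounded pole order already computes the full cohomology: every closed form in $j_*\Om^\bullet_{U^{\rm an}}$ with a pole of arbitrary order can be modified, by subtracting an exact form $\ud\eta$ with $\eta$ of controlled pole order, to a form whose pole order in degree $p$ is at most $p$ along each $z_\nu$ (so in particular at most $a_\nu+p$ since $a_\nu\ge 1$), and conversely a form in $\mK^\bullet_{X^{\rm an}}(A)$ that is exact in $j_*\Om^\bullet_{U^{\rm an}}$ is already $\ud$ of an element of $\mK^\bullet_{X^{\rm an}}(A)$. This is the reduction-of-pole-order argument: writing $\om=\sum \tfrac{dz_I}{z_I}\wedge\alpha_I+(\text{lower order})$ and integrating out the highest-order poles one term at a time, using that $\ud(z_\nu^{-(j-1)}\beta)$ contributes a $z_\nu^{-j}\,dz_\nu$ term. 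This is precisely the content of \cite[Lemma 4.1]{abg:73}, and I would invoke that reference for the bookkeeping rather than reproduce it.

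The main obstacle is the pole-reduction step itself, and in particular making sure the bound ``order $\le a_\nu+p$ in degree $p$'' is exactly what the inductive reduction produces and that the twist by $A$ (rather than just by $pV$) gives enough room; the hypothesis $a_\nu>0$ is used precisely here, since it guarantees $a_\nu+p\ge p+1$, which is the slack needed to absorb the $\ud\eta$ correction terms produced when lowering a pole of order $p+1$ in degree $p$. Everything else — checking $\ud(\mK^p_{X^{\rm an}}(A))\subseteq\mK^{p+1}_{X^{\rm an}}(A)$, reducing to stalks, and choosing normal-crossing coordinates — is formal. I would therefore structure the proof as: (1) pass to cohomology sheaves and stalks; (2) choose local coordinates adapting $V$ to coordinate hyperplanes; (3) recall the cohomology of the punctured polydisc; (4) cite the pole-order reduction of \cite[Lemma 4.1]{abg:73} to conclude that the bounded-pole subcomplex has the same cohomology.
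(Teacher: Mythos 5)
Your proposal matches the paper's approach: the paper itself gives no detailed argument but simply notes that the lemma is "proved analogously to the corresponding statement for the logarithmic complex" and that "the calculation can be found in~\cite[Lemma 4.1]{abg:73}." Your sketch — reduce to stalks, adapt coordinates so $V$ becomes coordinate hyperplanes, compare with the cohomology of the punctured polydisc, and invoke the pole-order reduction of Atiyah--Bott--G{\aa}rding for the bookkeeping — is precisely the argument the paper is implicitly referring to, with the correct identification of where the hypothesis $a_i>0$ enters.
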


The following is the main result of this section and the key for our algorithm.
\begin{theorem}\label{thm:cohOpenGlobalSec}
Let $X\subseteq\proj^n$ be a smooth closed subvariety with dimension at most
$m\ge 1$.
Let~$D\ge 2$ and $e$ be upper bounds on the degree  and the codimension of all
irreducible components of~$X$. Let $H_0,\ldots,H_q$ be hyperplane sections of
$X$ such that $V=H_0\cup\ldots\cup H_q$ has normal crossings, and denote
$U:=X\setminus V$. For $s\in\N$ set $K_s^\bullet:=\G(X,\mK_X^\bullet(sV))$. Then
we have
$$
\Hdr^\bullet(U)\simeq H^\bullet(K_s^\bullet)\quad\mathrm{for}\quad
s\ge m(em+1)D.
$$
\end{theorem}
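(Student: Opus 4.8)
The plan is to compute $\Hdr^\bullet(U)$ in three stages: pass to the holomorphic category via Lemma~\ref{lem:quasiiso}, compare the algebraic complex $\mK_X^\bullet(sV)$ with $j_*\Om_U^\bullet$, and finally kill the \v{C}ech direction by a regularity (acyclicity) argument. First I would invoke Grothendieck's comparison theorem together with the GAGA principle: the algebraic de Rham cohomology $\Hdr^\bullet(U)=\H^\bullet(U,\Om_U^\bullet)$ agrees with the hypercohomology of the holomorphic de Rham complex on $U^{\rm an}$, which by Lemma~\ref{lem:quasiiso} equals $\H^\bullet(X^{\rm an},\mK_{X^{\rm an}}^\bullet(sV))$ since $sV=\sum_i sm_iV_i$ has all coefficients positive. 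Then GAGA (the sheaves $\mK_X^p(sV)=\Om_X^p((s+p)V)$ are coherent on the projective variety $X$) identifies this with the algebraic hypercohomology $\H^\bullet(X,\mK_X^\bullet(sV))$. At this point the theorem reduces to showing that this hypercohomology is computed by the complex of \emph{global sections} $K_s^\bullet=\G(X,\mK_X^\bullet(sV))$.

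For that last step I would use the first spectral sequence of the hypercohomology double complex, as recorded in \S\ref{ss:doubleComplexHypercoh}: if every sheaf $\mK_X^p(sV)$ is acyclic on $X$, then $\H^\bullet(X,\mK_X^\bullet(sV))=H^\bullet(\G(X,\mK_X^\bullet(sV)))=H^\bullet(K_s^\bullet)$. So it suffices to prove $H^i(X,\mK_X^p(sV))=0$ for all $i>0$ and all $p\ge 0$, under the hypothesis $s\ge m(em+1)D$. Here I use \eqref{eq:lineBundleHyperplSec}: since $V$ is the union of $q+1$ hyperplane sections, $\Oh_X(V)\simeq\Oh_X(q+1)$, so
$$
\mK_X^p(sV)=\Om_X^p((s+p)V)\simeq\Om_X^p\bigl((s+p)(q+1)\bigr)=\Om_X^p\otimes\Oh_X\bigl((s+p)(q+1)\bigr).
$$
By definition of $k$-regularity and Theorem~\ref{thm:boundReg}, for $p>0$ the sheaf $\Om_X^p$ is $\reg(\Om_X^p)$-regular with $\reg(\Om_X^p)\le p(em+1)D$, and $\Oh_X$ is $e(D-1)$-regular; by Remark~\ref{rem:shortExactSequ}(i) this gives vanishing $H^i(X,\Om_X^p(\ell-i))=0$ for all $\ell\ge\reg(\Om_X^p)$, $i>0$, and in particular $H^i(X,\Om_X^p(\ell))=0$ whenever $\ell\ge\reg(\Om_X^p)$ (take $i\le m$, noting higher cohomology vanishes automatically above the dimension). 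Thus it remains to check the numerical inequality $(s+p)(q+1)\ge\reg(\Om_X^p)$. Since $q+1\ge 1$ and $s+p\ge s\ge m(em+1)D\ge p(em+1)D$ exactly when $p\le m$ — which holds because $\Om_X^p=0$ for $p>m$ and so those sheaves are trivially acyclic — the bound $(s+p)(q+1)\ge s\ge p(em+1)D\ge\reg(\Om_X^p)$ holds for all relevant $p$; the case $p=0$ uses $s\ge m(em+1)D\ge e(D-1)\ge\reg(\Oh_X)$.

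The main obstacle is the first stage: justifying that the algebraic hypercohomology of the polar-pole complex $\mK_X^\bullet(sV)$ actually computes $\Hdr^\bullet(U)$. This is where one needs the interplay of Grothendieck's algebraic de Rham theorem, GAGA on the projective variety $X$, and the holomorphic quasi-isomorphism of Lemma~\ref{lem:quasiiso}; one must be careful that the quasi-isomorphism is a statement about complexes of sheaves (inducing an isomorphism on hypercohomology by the second spectral sequence, as noted in \S\ref{ss:doubleComplexHypercoh}) and that $j_*\Om_U^\bullet$ has the same hypercohomology as $\Om_U^\bullet$ on $U$. The remaining steps — the spectral sequence degeneration and the regularity bookkeeping — are routine given Theorem~\ref{thm:boundReg} and \eqref{eq:lineBundleHyperplSec}, modulo carefully tracking that the pole order appearing in $\mK_X^p(sV)=\Om_X^p((s+p)V)$ is large enough after rewriting $\Oh_X(V)\simeq\Oh_X(q+1)$.
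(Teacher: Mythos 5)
Your proposal follows the same strategy as the paper's proof: invoke Lemma~\ref{lem:quasiiso} to compare $\mK_{X^{\rm an}}^\bullet(sV)$ with $j_*\Om_{U^{\rm an}}^\bullet$, transfer to the algebraic category via GAGA, and then conclude by showing that the sheaves $\mK_X^p(sV)=\Om_X^p((s+p)V)\simeq\Om_X^p((s+p)(q+1))$ are acyclic from the regularity bound of Theorem~\ref{thm:boundReg}, so that hypercohomology collapses to $H^\bullet(K_s^\bullet)$. The only real routing difference is on the $\Hdr^\bullet(U)$ side: the paper applies GAGA to both hypercohomologies on $X^{\rm an}$ at once and then uses the \emph{algebraic} affine vanishing $H^i(X,j_*\Om_U^p)=H^i(U,\Om_U^p)=0$ for $i>0$, whereas you invoke Grothendieck's comparison theorem to pass to holomorphic hypercohomology on $U^{\rm an}$ and then must move it to $X^{\rm an}$ through the analytic pushforward --- the step you flag as the ``main obstacle.'' The paper's ordering sidesteps exactly that analytic pushforward issue, so you may prefer to mirror it; otherwise your numerical bookkeeping with the regularity bound and \eqref{eq:lineBundleHyperplSec} matches the paper's.
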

\begin{proof}
It follows from Lemma~\ref{lem:quasiiso} that
\begin{equation}\label{eq:isoHypercoh}
\H^\bullet(X^{\rm an},\mK_{X^{\rm an}}^\bullet(sV))\simeq\H^\bullet(X^{\rm an},j_*\Om_{U^{\rm an}}^\bullet).
\end{equation}
Since $\mK_{X}^\bullet(sV)$ and $j_*\Om_{U}^\bullet$ are coherent sheaves on $X$,
by GAGA~\cite{ser:56} the hypercohomologies in~\eqref{eq:isoHypercoh} can be
replaced by their algebraic versions. But we have
$H^i(X,j_*\Om_{U}^p)=H^i(U,\Om_{U}^p)=0$ for $i>0$ since $U$ is affine
(see~\S\ref{ss:cohSheaves}). Hence, the right hypercohomology
in~\eqref{eq:isoHypercoh} is $\Hdr^\bullet(U)$. 

On the other hand, 
Theorem~\ref{thm:boundReg} implies
$s\ge\reg(\Om_X^p)$ for all $0\le p\le m$, thus
$$
H^i(X,\Om_X^p((s+p)V))=H^i(X,\Om_X^p((s+p)(q+1)))=0\quad\textrm{for all}\quad i>0
$$
(use~\eqref{eq:lineBundleHyperplSec}). It follows that the left side
of~\eqref{eq:isoHypercoh} is $H^\bullet(K_s^\bullet)$ as claimed.
\end{proof}

\subsection{Computation}
\label{ss:compCohHypCompl}
We adopt the notations and assumptions of the last section.
We choose $s$ according to Theorem~\ref{thm:cohOpenGlobalSec} and set $K^\bullet:=K_s^\bullet$.
In this section we
describe this finite dimensional complex more explicitly and show
how to compute its cohomology.

Let $H_\nu=\mZ_X(\ell_\nu)$ with linear forms $\ell_\nu$, $0\le\nu\le q$.
We can assume w.l.o.g.\ that $\ell_0=X_0$. With $f:=X_0\ell_1\cdots \ell_q$ we
have $V=\mZ_X(f)$ and $U=X\setminus V$. On the ambient space we define
$\tilde{V}:=\mZ(f)$ and $\tilde{U}:=\proj^n\setminus\tilde{V}$.
Recall from~\S\ref{ss:cohSheaves} that each $p$-form on $\tilde{U}$
is given by
\begin{equation}\label{eq:diffForm}
\om=\frac{\a}{f^t}\quad\textrm{with}\quad\deg\a=t(q+1)\quad\textrm{and}\quad
\D(\a)=0,
\end{equation}
where $\a$ is a homogeneous $p$-form on $\C^{n+1}$ and $\D$ denotes contraction
with the Euler vector field.
Note that for fixed $t$ such forms are precisely the global sections 
of the sheaf $\Om_{\proj^n}^p(t\tilde{V})$.
We have the following
\begin{lemma}\label{lem:restrSurj}
With $s\ge m(em+1)D$ the restriction map
$$
\G(\proj^n,\Om_{\proj^n}^p(t\tilde{V}))\longrightarrow\G(X,\Om_{X}^p(tV))
$$
is surjective for all $t\ge s$ and $p\ge 0$.
\end{lemma}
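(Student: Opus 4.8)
The plan is to factor the restriction map through the scheme‑theoretic restriction to $X$ and to kill the two resulting obstructions separately, using the regularity bounds of Section~\ref{se:CMRegularity}. Write $i\colon X\hookrightarrow\proj^n$ for the inclusion and set $N:=t(q+1)$, so that $\Om_{\proj^n}^p(t\tilde V)=\Om_{\proj^n}^p(N)$ since $\tilde V$ has degree $q+1$, and $i^*\Oh_{\proj^n}(t\tilde V)\simeq\Oh_X(tV)\simeq\Oh_X(N)$ by~\eqref{eq:lineBundleHyperplSec}. Then the map of the lemma factors as
$$
\G\bigl(\proj^n,\Om_{\proj^n}^p(N)\bigr)\xrightarrow{\,\rho_1\,}\G\bigl(X,(i^*\Om_{\proj^n}^p)(tV)\bigr)\xrightarrow{\,\rho_2\,}\G\bigl(X,\Om_X^p(tV)\bigr),
$$
where $\rho_1$ restricts sections from $\proj^n$ to $X$, and $\rho_2$ is induced by the surjection $i^*\Om_{\proj^n}^p\twoheadrightarrow\Om_X^p$ obtained as the $p$‑th exterior power of the last map in the conormal sequence~\eqref{eq:conormal}. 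It suffices to treat $0\le p\le m$, since $\Om_X^p=0$ for $p>\dim X$. I would prove that $\rho_1$ and $\rho_2$ are each surjective.

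For $\rho_1$: tensoring $0\to\mI_X\to\Oh_{\proj^n}\to i_*\Oh_X\to 0$ with the locally free sheaf $\Om_{\proj^n}^p(N)$ is exact, so on global sections the cokernel of $\rho_1$ embeds into $H^1\bigl(\proj^n,\mI_X\otimes\Om_{\proj^n}^p(N)\bigr)$, which vanishes once $\reg\bigl(\mI_X\otimes\Om_{\proj^n}^p\bigr)\le N+1$. By Proposition~\ref{prop:tensorProd} applied on $\proj^n$ (where $\Reg(\proj^n)=1$), together with $\reg(\Om_{\proj^n}^p)\le 2p$ (from Example~\ref{ex:regPn}(iii) and Corollary~\ref{cor:regExterior}) and $\reg(\mI_X)\le eD-e+1$ (Corollary~\ref{cor:regIdealSheaf}), this bound is at most $eD-e+1+2m$, which is $\le s+1\le N+1$ because $s\ge m(em+1)D$.

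For $\rho_2$: since $X$ is smooth, the conormal sequence is a short exact sequence of vector bundles, so the kernel $\mathcal{K}$ of $i^*\Om_{\proj^n}^p\twoheadrightarrow\Om_X^p$ carries the filtration induced on the $p$‑th exterior power, with locally free graded pieces $\Lambda^a(\mI_X/\mI_X^2)\otimes\Om_X^{p-a}$ for $a=1,\dots,p$. Twisting $0\to\mathcal{K}\to i^*\Om_{\proj^n}^p\to\Om_X^p\to 0$ by $\Oh_X(N)$ and passing to cohomology, the cokernel of $\rho_2$ embeds into $H^1(X,\mathcal{K}(N))$, so it is enough to show $\reg(\mathcal{K})\le N+1$. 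Applying Remark~\ref{rem:shortExactSequ}(ii) repeatedly along the filtration gives $\reg(\mathcal{K})\le\max_{1\le a\le p}\reg\bigl(\Lambda^a(\mI_X/\mI_X^2)\otimes\Om_X^{p-a}\bigr)$, and each term on the right is bounded, via Corollary~\ref{cor:regExterior} and Proposition~\ref{prop:tensorProd} on $X$, in terms of $\reg(\mI_X/\mI_X^2)\le(e+1)D-e$ (as in~\eqref{eq:regConormal}), the regularity of $\Om_X^{p-a}$ (resp.\ of $\Oh_X$ when $a=p$) from Theorem~\ref{thm:boundReg}, and $\Reg(X)\le\max\{1,e(D-1)\}$. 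A direct computation shows that the leading contribution is $p(em+1)D$, so that $\reg(\mathcal{K})\le p(em+1)D\le m(em+1)D\le s\le N+1$.

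The step I expect to be the main obstacle is precisely this last estimate for $\reg(\mathcal{K})$: one must propagate the exterior‑power and tensor‑product regularity penalties of Corollary~\ref{cor:regExterior} and Proposition~\ref{prop:tensorProd} through the entire conormal filtration and check that the accumulated constants still fit under the bound $m(em+1)D$ — this is where the precise shape of Theorem~\ref{thm:boundReg} and the choice of $s$ are needed. A secondary issue is that Corollary~\ref{cor:regIdealSheaf} and~\eqref{eq:regConormal} are stated for irreducible varieties; for reducible $X$, whose irreducible components are disjoint because $X$ is smooth, one argues component by component.
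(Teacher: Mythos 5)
Your proposal is correct and follows essentially the same route as the paper's proof: the paper factors the restriction map through the two maps $\a^*$ and $\b^*$ coming from the exact sequences $0\to\mI\otimes\Om_{\proj^n}^p\to\Om_{\proj^n}^p\to i_*\Oh_X\otimes\Om_{\proj^n}^p\to 0$ and $0\to\ker\b\to i^*\Om_{\proj^n}^p\to\Om_X^p\to 0$, kills the first obstruction by the same estimate $\reg(\mI\otimes\Om_{\proj^n}^p)\le e(D-1)+1+2p\le s+1$, and bounds $\reg(\ker\b)$ via the filtration of $\bigwedge^p(i^*\Om_{\proj^n})$ with graded pieces $\bigwedge^j\mI/\mI^2\otimes\Om_X^{p-j}$, arriving at the same bound $p(em+1)D\le s$ by "the same calculation as in the proof of Theorem~\ref{thm:boundReg}." Your identification of $\mathcal{K}=\ker\b=\mF^1$ and the inductive use of Remark~\ref{rem:shortExactSequ}(ii) along the filtration matches the paper exactly; the step you flag as the main obstacle does go through, since $\reg(\mI/\mI^2)$ and $\reg(\Om_X)$ are both bounded by $(e+1)D-e$ and the exterior-power/tensor penalties add up to the same expression $p((e+1)D-e)+(p-1)(m-1)(R-1)<p(em+1)D$ as in the proof of Theorem~\ref{thm:boundReg}. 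Your closing remark on the reducible case is a small point the paper glosses over (it simply asserts irreducibility can be assumed in Theorem~\ref{thm:boundReg}); noting that the components of a smooth $X$ are disjoint and arguing componentwise is the right fix.
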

\begin{proof}
Let $i\colon X\hookrightarrow\proj^n$ be the inclusion and consider the exact
sequence
\begin{equation}\label{eq:alpha}
0\to\mI\otimes\Om_{\proj^n}^p\to\Om_{\proj^n}^p\stackrel{\a}{\to}
i_*\Oh_X\otimes\Om_{\proj^n}^p\to 0 
\end{equation}
on $\proj^n$, as well as
\begin{equation}\label{eq:beta}
0\to\ker\b\to i^*\Om_{\proj^n}^p\stackrel{\b}{\to}\Om_X^p\to 0
\end{equation}
on $X$.
The restriction map from the lemma coincides with the composition of
$$
H^0(\proj^n,\Om_{\proj^n}^p(t(q+1)))\stackrel{\a^*}{\longrightarrow}
H^0(\proj^n,i_*\Oh_X\otimes\Om_{\proj^n}^p(t(q+1)))
\simeq H^0(X,i^*\Om_{\proj^n}^p(t(q+1)))
$$
with
$$
H^0(X,i^*\Om_{\proj^n}^p(t(q+1)))\stackrel{\b^*}{\longrightarrow}H^0(X,\Om_{X}^p(t(q+1))).
$$
We show that $\a^*$ and $\b^*$ are surjective for $t\ge s$. For $\a^*$ this
follows immediately by twisting the short exact sequence~\eqref{eq:alpha} and
considering the induced long exact sequence of cohomology, taking into account
that $H^1(\proj^n,\mI\otimes\Om_{\proj^n}^p(t(q+1)))=0$, since
$$
\reg(\mI\otimes\Om_{\proj^n}^p)\le\reg(\mI)+p\cdot\reg(\Om_{\proj^n})\le e(D-1)+1+2p
\le s+1.
$$
We can prove that $\b^*$ is surjective by the same method, once we bound the
regularity of $\ker\b$. By~\cite[II, Exercise 5.16]{hart:77} the exact sequence
$$
0\rightarrow\mI/\mI^2\to i^*\Om_{\proj^n}\to \Om_X\rightarrow 0
$$
induces a filtration of locally free sheaves on $X$
$$
\bigwedge^p (i^*\Om_{\proj^n})=\mF^0\supseteq\mF^1\supseteq\cdots
\supseteq\mF^p\supseteq\mF^{p+1}=0
$$
and for all $0\le j\le p$ an exact sequence
\begin{equation}\label{eq:quotients}
0\to\mF^{j+1}\to\mF^j\to\bigwedge^j\mI/\mI^2\otimes \Om_X^{p-j}\to 0
\end{equation}
In particular, for $j=0$ this sequence
coincides with~\eqref{eq:beta}, since
$\bigwedge^p (i^*\Om_{\proj^n})=i^* \Om_{\proj^n}^p$~\cite[II, Exercise~5.16]{hart:77},
so that $\ker\b\simeq\mF^1$.

The same calculation as in the proof of Theorem~\ref{thm:boundReg} using
Proposition~\ref{prop:reg1Forms} and Corollary~\ref{cor:regExterior} shows
$$
\reg\big(\bigwedge^j\mI/\mI^2\otimes \Om_X^{p-j}\big)\le p(em+1)D\le s.
$$
Furthermore, by Remark~\ref{rem:shortExactSequ} (ii) the sequence~\eqref{eq:quotients} yields
$$
\reg(\mF^j)\le\max\{\reg(\mF^{j+1}),s\},
$$
which inductively implies $\reg(\mF^j)\le s$ for all $j$, in particular
$\reg(\ker\b)\le s$.
\end{proof}
It follows from the Lemma
 that each element of  $K^p=\G(X,\Om_{X}^p((s+p)V))$
is the restriction of a form in $\Om^p:=\G(\proj^n,\Om_{\proj^n}^p(t\tilde{V}))$.

We identify $\C^n\simeq\{X_0\ne 0\}\subseteq\proj^n$ and set
$X^0:=X\setminus\mZ(X_0)$. 
As with polynomials one can dehomogenize a homogeneous differential
form $\a$ on $\C^{n+1}$ by setting $X_0=1$ and $\ud X_0=0$ to get a form
$\a^0$ on $\C^n$.
Hence for $\om\in\Om^p$ one gets a regular form $\om^0$ on
$\C^n\setminus\mZ(f^0)$.
Its restriction defines a regular form on the dense open subset
$U=X^0\setminus\mZ(f^0)$ of $X^0$.

We use the algorithm of Sz{\'a}nt{\'o}~\cite{sza:97,sza:99} to compute a decomposition
$I:=I(X^0)=\bigcap_j I_j$, where each $I_j$ is the saturated ideal of a
squarefree regular chain $G_j$. Note that $\mZ(I_j)$ is equidimensional.
We will construct for all $j$ a linear system of equations describing the
identity $\om=0$ on~$\mZ(I_j)$ for $\om\in K^p$.
For simplicity we assume that $I$ is represented by a single
$G=\{g_1,\ldots,g_e\}$.
In the general case one only has to combine all the linear systems
to one large system.

Let $k\in\N$. In~\cite{bus:09} we have constructed a linear system of
equations
\begin{equation}\label{eq:prem}
\prem_k(f,G)=0
\end{equation}
in the coefficients of $f\in\C[X_1,\ldots,X_n]$,
whose solution space is $I_{\le k}$, the set of polynomials of degree
$\le k$ vanishing on $X^0$. 

Sz{\'a}nt{\'o}'s algorithm also yields a polynomial $h$ which is a
non-zerodivisor mod~$I$, and such that the module of differentials on
$X^0\setminus\mZ(h)$ is the free module generated by $m$ of the $\ud X_j$,
where $m=\dim X^0$.
More precisely, let $X_1,\ldots,X_m$ denote the free variables, and
$Y_1,\ldots,Y_e$ the dependent variables, where $m+e=n$.
For a polynomial  $F\in\C[X_1,\ldots,X_m,Y_1,\ldots,Y_e]$ we denote
$\overline{F}:= F \bmod I\in \C[X^0]$.
Then by~\cite[Propositon 3.13]{bus:09} we have
$\Om_{\C[X^0]_h/\C}=\bigoplus_{i=1}^m\C[X^0]_h\ud \overline{X}_i$. Furthermore,
for all $F\in\C[X_1,\ldots,X_m,Y_1,\ldots,Y_e]$
\begin{equation}\label{eq:differential}
\ud\overline{F}=\sum_{i=1}^m\left(\partder{F}{X_i}-\partder{F}{Y}
\left(\partder{g}{Y}\right)^{-1}\partder{g}{X_i}\right)\ud \overline{X}_i,
\end{equation}
where $g:=(g_1,\ldots,g_e)^T$. 
Note that $h$ is a multiple of 
$\det(\partder{g}{Y})$, so that the entries of
$(\partder{g}{Y})^{-1}$ lie in $h^{-1}\C[X_1,\ldots,X_n]$.
Using~\eqref{eq:differential} for the coordinates $Y_j$,
one can write the restriction of a form $\om\in\Om^p$ to
$U\setminus\mZ(h)$ in terms of the free generators of $\Om_{\C[U]_h/\C}^p$,
which are $\ud \overline{X}_{i_1}\wedge\cdots\wedge\ud \overline{X}_{i_p}$,
where $1\le i_1<\cdots<i_p\le m$. 
It follows that $\om=\frac{\om_h}{h(f^0)^t}$, where
$$
\om_h=\sum_{1\le i_1<\cdots<i_p\le m}(\om_h)_{i_1\cdots i_p}\ud 
\overline{X}_{i_1}\wedge\cdots\wedge\ud \overline{X}_{i_p}\in\Om_{\C[X^0]/\C}^p.
$$
Then, since $U\setminus\mZ(h)$ is dense in $U$ and in $X^0$, we have
\begin{eqnarray}\label{eq:formIdentLinSys}
\om=0\ \textrm{on}\ U & \iff & \om_h=0\ \textrm{on}\ U\setminus\mZ(h)\nonumber\\
& \iff &
\forall i_1<\cdots<i_p\colon\ (\om_h)_{i_1\cdots i_p}=0\ \textrm{on}\ U\setminus\mZ(h)\nonumber\\
& \iff &
\forall i_1<\cdots<i_p\colon\ (\om_h)_{i_1\cdots i_p}\in I_{\le k}
\end{eqnarray}
for sufficiently large $k$.

Now we compute the cohomology of $K^\bullet$.
First note that the contraction with the Euler vector field $\D$ (cf.~\S\ref{ss:cohSheaves}) can be easily computed, so that we can
compute a basis for $\Om^p$.
Consider the commutative diagram
$$
\xymatrix@M+3pt{
\Om^p \ar[r]^\ud \ar[d]^\pi & \Om^{p+1} \ar[d]^\pi \\
K^p \ar[r]^\ud & K^{p+1}, \\
}
$$
where $\pi$ is the restriction of forms to $U$. Let
$N^p:=\ker(\pi\colon\Om^p\rightarrow K^p)$.
According to~\eqref{eq:formIdentLinSys} and~\eqref{eq:prem}, $N^p$ is the
solution set of a linear system of equations. Since~$\pi$ is surjective,
we have $K^p\simeq \Om^p/N^p$, thus $K^p$ can be identified with
any complementary subspace of $N^p$ in $\Om^p$.
So we compute a basis of $N^p$ and extend it to a basis of $\Om^p$
to get a basis of $K^p$ via this identification. The differential
$\ud\colon K^p\rightarrow K^{p+1}$ is just the restriction of the differential
$\ud\colon\Om^p\rightarrow\Om^{p+1}$, which we can evaluate efficiently.
Hence we can compute
the matrix of $\ud\colon K^p\rightarrow K^{p+1}$ with respect to the computed
bases of $K^\bullet$.  By computing kernel and image of this matrix and taking
their quotient we get the cohomology of $K^\bullet$. 

\begin{proposition}\label{prop:compCohCompl}
Under the notations and assumptions of Theorem~\ref{thm:cohOpenGlobalSec}, let
$X\subseteq\proj^n$ be given by equations of degree $\le d$.
Then one can compute the cohomology $\Hdr^\bullet(U)$ in parallel time
$(d\log n)^{\Oh(1)}$ and sequential time $d^{\Oh(n^4)}$.
\end{proposition}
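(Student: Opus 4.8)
The algorithm is precisely the one developed in §\ref{ss:compCohHypCompl}; the plan is to bound the size of every intermediate object and to observe that each step reduces either to Sz{\'a}nt{\'o}'s algorithm or to one of the linear-algebra tasks of §\ref{ss:linAlg}. First I would fix~$s$: since $D=\deg X\le d^n$ by B\'ezout and $e\le n$, $m\le n$ trivially, the crude value $s:=n(n^2+1)d^n$ already satisfies $s\ge m(em+1)D$ and is of single exponential size $d^{\Oh(n)}$; likewise $t:=s+p$ is $d^{\Oh(n)}$. For each $p\le m$ a basis of $\Om^p=\G(\proj^n,\Om_{\proj^n}^p(t\tilde V))$ is obtained by writing a generic homogeneous $\a\in\L^p$ of degree $t(q+1)$ and computing the kernel of the constant integer matrix of the contraction $\D$ in the monomial basis; hence $\dim\Om^p\le\binom{n+1}{p}\binom{t(q+1)+n}{n}=d^{\Oh(n^2)}$. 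The arithmetically heavy ingredient is Sz{\'a}nt{\'o}'s algorithm~\cite{sza:97,sza:99}, applied to $I(X^0)$: as used in~\cite{bus:09}, it produces the decomposition $I=\bigcap_j I_j$ into saturated ideals of squarefree regular chains $G_j$, together with the non-zerodivisors $h_j$ and the splitting of the coordinates into free variables $X_1,\dots,X_m$ and dependent variables, in parallel time $(n\log d)^{\Oh(1)}$ and sequential time $d^{\Oh(n^4)}$; all polynomials it returns have degree $d^{\Oh(n)}$ and the number of chains is single exponential.

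Next I would assemble, for each chain $G_j$ and each $p$, the homogeneous linear system cutting out $N^p=\ker(\pi\colon\Om^p\to K^p)$ as in~\eqref{eq:formIdentLinSys}. Inverting $\partial g/\partial Y$ by Cramer's rule and substituting into~\eqref{eq:differential} expresses each $\ud\overline{Y}_\ell$ in the free generators $\ud\overline{X}_i$ with coefficients in $h_j^{-1}\C[X_1,\dots,X_n]$ of degree $d^{\Oh(n)}$; clearing the factor $h_j^{t}$ and expanding a generic $\om\in\Om^p$ in the generators $\ud\overline{X}_{i_1}\wedge\cdots\wedge\ud\overline{X}_{i_p}$ then yields coefficients $(\om_{h_j})_{i_1\cdots i_p}$ that are polynomials of degree $k=d^{\Oh(n)}$ whose own coefficients are linear forms in the $d^{\Oh(n^2)}$ unknown coordinates of $\om$. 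Imposing $(\om_{h_j})_{i_1\cdots i_p}\in(I_j)_{\le k}$ for all $j$ and all $i_1<\cdots<i_p$ amounts to substituting those linear forms into the linear systems $\prem_k(f,G_j)=0$ of~\cite{bus:09} and intersecting the solution sets; this gives a homogeneous linear system for $N^p$ in $d^{\Oh(n^2)}$ unknowns and single exponentially many equations, with entries that are polynomials of degree $d^{\Oh(n)}$ in $\Oh(n)$ variables.

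Then I would run through the remaining steps, each of which §\ref{ss:linAlg} reduces to rank and characteristic-polynomial computations: compute a basis of $N^p$; extend it to a basis of $\Om^p$ and identify the span of the adjoined vectors with $K^p$; apply the explicitly computable matrix of $\ud\colon\Om^p\to\Om^{p+1}$ to those generators and project modulo $N^{p+1}$ to obtain the matrix of $\ud\colon K^p\to K^{p+1}$; and finally take the kernel, image and quotient of this matrix to get $H^p(K^\bullet)$, which equals $\Hdr^p(U)$ by Theorem~\ref{thm:cohOpenGlobalSec}. Every matrix occurring here has $d^{\Oh(n^2)}$ columns, single exponentially many rows, and entries of degree $d^{\Oh(n)}$ in $\Oh(n)$ auxiliary variables, so the bounds for the Berkowitz algorithm and the Mulmuley rank procedure recalled in §\ref{ss:linAlg} handle each of them in parallel time $(n\log d)^{\Oh(1)}$ and sequential time $d^{\Oh(n^3)}$.

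The step I expect to demand the most care is the overall complexity bookkeeping: one must check that assembling and solving the system for $N^p$ once it has been combined over the single exponentially many chains $G_j$, and in particular the loop that extends a linearly independent set to a basis, stay within these bounds — the latter being a standard, well-parallelizable sequence of rank computations. Granting this, Sz{\'a}nt{\'o}'s algorithm dominates the sequential cost, and $\Hdr^\bullet(U)$ is computed in parallel time $(n\log d)^{\Oh(1)}$ and sequential time $d^{\Oh(n^4)}$.
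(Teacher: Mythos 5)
Your proof follows the same structure as the paper's: analyze Sz\'ant\'o's algorithm, bound the dimensions and degrees occurring in the linear system for $N^p$, and finish with the linear-algebra routines of \S\ref{ss:linAlg}. The overall outline is correct, and you reach the right final bound $d^{\Oh(n^4)}$. However, there is a genuine error in the degree bookkeeping that misattributes where this bound comes from.

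You claim that ``all polynomials [Sz\'ant\'o's algorithm] returns have degree $d^{\Oh(n)}$.'' That is not what the cited sources support. As recorded in the paper's own proof, by~\cite{sza:99} the maximal degree $\d$ of the polynomials in the squarefree regular chain satisfies $\d = d^{\Oh(n^2)}$, not $d^{\Oh(n)}$. This is not a harmless constant: it propagates through the entire bound on the system for $N^p$. The degree $k$ that must be taken in~\eqref{eq:formIdentLinSys} is of order $(s+p)(q+1) + p(e+1)\d$, which with $\d = d^{\Oh(n^2)}$ gives $k = d^{\Oh(n^2)}$ rather than $d^{\Oh(n)}$; and the size of the linear system $\prem_k(f,G)=0$ is $\Oh\bigl((nk\d^e)^n\bigr)$, which with $e\le n$, $k=d^{\Oh(n^2)}$ and $\d^e = d^{\Oh(n^3)}$ becomes $d^{\Oh(n^4)}$. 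Under your incorrect bound $\d=d^{\Oh(n)}$ this system would only have size $d^{\Oh(n^3)}$, which is why you conclude the linear algebra costs $d^{\Oh(n^3)}$ and that Sz\'ant\'o's own running time dominates. In fact, the linear system describing $N^p$ is itself of size $d^{\Oh(n^4)}$, and that system is what forces the sequential bound. You reach $d^{\Oh(n^4)}$ anyway because you (correctly) quote $d^{\Oh(n^4)}$ for Sz\'ant\'o's running time, so the statement survives, but the analysis of the intermediate matrices is too optimistic and would not actually show that the linear algebra stays within $d^{\Oh(n^3)}$. To repair, replace every $d^{\Oh(n)}$ bound stemming from Sz\'ant\'o's output degrees by $d^{\Oh(n^2)}$ and recompute: the system for $N^p$ then has size $d^{\Oh(n^4)}$, the linear algebra costs $d^{\Oh(n^4)}$ sequentially (still $(n\log d)^{\Oh(1)}$ in parallel, since these are constant matrices), and the proposition follows as stated.

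One smaller point: once the linear conditions $\prem_k\bigl((\om_h)_{i_1\cdots i_p},G_j\bigr)=0$ are assembled, the resulting system is a linear system over $\C$ in the coordinates of $\om$ with \emph{constant} entries, so the kernel/image/quotient computations invoke Berkowitz and Mulmuley on scalar matrices; there is no need (and it would be incorrect) to model them as matrices with polynomial entries of degree $d^{\Oh(n)}$ in $\Oh(n)$ variables.
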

\begin{proof}
It remains to analyze the algorithm described above. Let $\d$ denote the maximal
degree of the polynomials in the squarefree regular chain $G$. Then the
system~\eqref{eq:prem}
has asymptotic size $\Oh((nk\d^e)^{n})$ and can be computed in parallel time
$(n\log(k\d))^{\Oh(1)}$ and sequential time $(k\d)^{\Oh(n^4)}$~\cite{bus:09}. 
Since the numerator of each $\om\in\Om^p$ has degree $(s+p)(q+1)$, the dimension
of $\Om^p$ is $\binom{n+1}{p}\binom{(s+p)(q+1)-p+n}{n}=\Oh(s^nn^{\Oh(n)})$. Furthermore, for $\om\in\Om^p$,
the degree of the coefficients of $\om_h$ is bounded by $(s+p)(q+1)+p(e+1)\d$, hence
we must choose the $k$ in~\eqref{eq:formIdentLinSys} of that order. Thus,
$N^p$ is described by a linear system of equations 
of size $\Oh(n^n((s+n)n+n^2\d)^n\d^{en})\le n^{\Oh(n)}s^n\d^{(e+1)n}$. 
Now let $X$ be given by equations of degree~$d$.
According to Theorem~\ref{thm:cohOpenGlobalSec}, we have to choose~$s$ of
order $n^3\deg X\le n^3d^n$. Furthermore, by~\cite{sza:99} we have $\d=d^{\Oh(n^2)}$.
Hence the size of this system is $d^{\Oh(n^4)}$.
The algorithms of \S\ref{ss:linAlg} imply the claimed bounds.
\end{proof}

\section{Patching Cohomologies}\label{se:patching}
Let $X$ be a smooth projective variety of dimension at most $m\ge 1$. Our aim
is to compute the de Rham cohomology of $X$ by way of an open affine cover. 

So let $H_0,\ldots,H_m\subseteq X$ be hyperplane sections
with $H_0\cap\cdots\cap H_m=\emptyset$ and set $U_i:=X\setminus H_i$.
Then $\mU:=\{U_i\,|\,0\le i\le m\}$ is an open affine cover of $X$.
Consider the {\em \v{C}ech double complex} $C^{\bullet,\bullet}:=C^{\bullet,\bullet}(\mU,\Om_X^\bullet)$
as defined in \S\ref{ss:doubleComplexHypercoh}.
Recall that with $U_{i_0\cdots i_q}=U_{i_0}\cap\cdots\cap U_{i_q}$ we have
$C^{p,q}(\mU,\Om_X^{\bullet})=\bigoplus_{i_0<\cdots<i_q}
\Om_X^p(U_{i_0\cdots i_q})$.
Since $\mU$ is a Leray cover for all the sheaves $\Om_X^p$, we have
\begin{lemma}\label{lem:dRCechDblComplex}
$\Hdr^\bullet(X)\simeq\H^\bullet(\mU,\Om_X^\bullet)=H^\bullet(\tot^\bullet(C^{\bullet,\bullet}))$,
where $\tot^\bullet(C^{\bullet,\bullet})$ denotes the total complex associated
to $C^{\bullet,\bullet}$.
\end{lemma}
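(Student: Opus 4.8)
The plan is to read Lemma~\ref{lem:dRCechDblComplex} off the general comparison for hypercohomology recorded at the end of \S\ref{ss:doubleComplexHypercoh}: if $\mU$ is a Leray cover for every term $\Om_X^p$ of the de Rham complex, then $\H^\bullet(X,\Om_X^\bullet)\simeq\H^\bullet(\mU,\Om_X^\bullet)$, and by definition $\H^\bullet(\mU,\Om_X^\bullet)=H^\bullet(\tot^\bullet(C^{\bullet,\bullet}))$. Since $\Hdr^\bullet(X)=\H^\bullet(X,\Om_X^\bullet)$ is the definition of algebraic de Rham cohomology, the whole lemma reduces to one verification: that $\mU$ is a Leray cover for each $\Om_X^p$, i.e.\ that $\Om_X^p$ is acyclic on every iterated intersection $U_{i_0\cdots i_q}=U_{i_0}\cap\cdots\cap U_{i_q}$.

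First I would check that each $U_{i_0\cdots i_q}$ is affine. Writing $H_i=\mZ_X(\ell_i)$ with linear forms $\ell_i$, we have $U_i=X\setminus\mZ_X(\ell_i)\subseteq\proj^n\setminus\mZ(\ell_i)\simeq\A^n$, so $U_i$ is a closed subvariety of affine space and hence affine. For the intersections one can either invoke that $X$ is separated, being closed in $\proj^n$, so that $U_{i_0}\cap\cdots\cap U_{i_q}$ is the preimage of the affine scheme $U_{i_0}\times\cdots\times U_{i_q}$ under the diagonal (a closed immersion) and hence affine by induction on $q$; or observe directly that $U_{i_0\cdots i_q}=X\setminus\mZ_X(f)$ with $f=\ell_{i_0}\cdots\ell_{i_q}$ homogeneous of degree $q+1$, and that pulling back along the $(q+1)$-st Veronese embedding $\proj^n\hookrightarrow\proj^N$ turns $f$ into a linear form, exhibiting $U_{i_0\cdots i_q}$ as a closed subvariety of a standard affine chart $\A^N\subseteq\proj^N$.

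Given this, acyclicity is immediate: $\Om_X^p$ is coherent (it is $\tilde{M}$ for a finitely generated graded module, being an exterior power of the coherent sheaf $\Om_X$ of K\"ahler differentials), and a coherent sheaf on an affine variety has vanishing higher cohomology by \cite[III, Theorem~3.5]{hart:77}. Hence $\mU$ is a Leray cover for every $\Om_X^p$, and the comparison quoted above gives $\Hdr^\bullet(X)=\H^\bullet(X,\Om_X^\bullet)\simeq\H^\bullet(\mU,\Om_X^\bullet)=H^\bullet(\tot^\bullet(C^{\bullet,\bullet}))$. The only point at which anything is actually proved is the affineness of the iterated intersections; everything else is an appeal to the definitions and the standard facts about coherent sheaves and hypercohomology already collected in \S\ref{ss:sheafCoh} and \S\ref{ss:doubleComplexHypercoh}, so I anticipate no genuine obstacle here — the lemma is essentially bookkeeping.
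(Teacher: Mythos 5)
Your proof is correct and takes essentially the same approach as the paper: the paper simply says, immediately before the lemma statement, ``Since $\mU$ is a Leray cover for all the sheaves $\Om_X^p$, we have...'', relying on the facts already recorded in \S\ref{ss:sheafCoh} and \S\ref{ss:doubleComplexHypercoh} (an affine cover of a separated variety is a Leray cover for any coherent sheaf, and hypercohomology is then computed by the \v{C}ech double complex). You merely spell out the affineness of the iterated intersections and the coherence of $\Om_X^p$, which the paper treats as understood.
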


To compute this cohomology, we replace the infinite dimensional
double complex $C^{\bullet,\bullet}$ by a finite dimensional one,
which is built from the complex of the last section 
for each $U_{i_0\cdots i_q}$.
More precisely, 
let $e,D$ have the meanings of 
Theorem~\ref{thm:cohOpenGlobalSec}, and choose $s\ge m(em+1)D$.
For a hypersurface $V$ in $X$ we denote
$K^p(V):=\G(X,\Om_X^p((s+p)V))$. 
This corresponds
to the complex $K_s^\bullet$ from Theorem~\ref{thm:cohOpenGlobalSec}.
Now we define the double complex 
$$
K^{p,q}:=\bigoplus_{i_0<\cdots<i_q} K^p(H_{i_0}\cup\cdots\cup H_{i_q})
$$
together with the differential $\d\colon K^{p,q}\rightarrow K^{p,q+1}$,
which is the restriction of the \v{C}ech differential~\eqref{eq:cechDiff},
and the exterior differential $\ud\colon K^{p,q}\rightarrow K^{p+1,q}$.
Then $K^{\bullet,\bullet}$ is a subcomplex of $C^{\bullet,\bullet}$.
\begin{lemma}\label{lem:dRDblComplex}
We have
$\Hdr^\bullet(X)\simeq H^\bullet(\tot^\bullet(K^{\bullet,\bullet}))$.
\end{lemma}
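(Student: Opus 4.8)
The plan is to compare the finite-dimensional double complex $K^{\bullet,\bullet}$ with the infinite-dimensional \v{C}ech double complex $C^{\bullet,\bullet}(\mU,\Om_X^\bullet)$ whose total cohomology computes $\Hdr^\bullet(X)$ by Lemma~\ref{lem:dRCechDblComplex}. The inclusion $K^{\bullet,\bullet}\hookrightarrow C^{\bullet,\bullet}$ is a map of double complexes (it commutes with both $\d$ and $\ud$ by construction), hence induces a map on the cohomology of the total complexes. It therefore suffices to show this map is an isomorphism, and for that I would use the spectral sequence of the double complex obtained by first taking cohomology with respect to the \v{C}ech differential $\d$ (the ``$q$-direction'').

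First I would observe that for each fixed $p$, the $p$-th row of $K^{\bullet,\bullet}$ is precisely the \v{C}ech complex $C^\bullet(\mU,\mK_X^p((s+p)V_\bullet))$ in the sense that its $q$-term is $\bigoplus_{i_0<\cdots<i_q}\G(X,\Om_X^p((s+p)(H_{i_0}\cup\cdots\cup H_{i_q})))$; but $\G(X,\Om_X^p((s+p)(H_{i_0}\cup\cdots\cup H_{i_q})))$ is exactly $\G(U_{i_0\cdots i_q},\Om_X^p)$ when $s+p\ge\reg(\Om_X^p)$, since every rational $p$-form regular on $U_{i_0\cdots i_q}$ automatically lies in $\G(X,\Om_X^p((s+p)V))$ once $s$ is large enough — this is the content of Theorem~\ref{thm:cohOpenGlobalSec} and Lemma~\ref{lem:restrSurj} applied on each affine patch. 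Here I need the key point that $\reg(\Om_X^p)\le p(em+1)D\le m(em+1)D\le s$ for all $0\le p\le m$ by Theorem~\ref{thm:boundReg}, so the surjectivity/identification holds uniformly for all the multi-indices. Consequently the $p$-th row of $K^{\bullet,\bullet}$ coincides with the $p$-th row of $C^{\bullet,\bullet}$ as \v{C}ech complexes, so the inclusion is in fact a row-wise isomorphism on the level of \v{C}ech cohomology: $H^q_\d(K^{p,\bullet})\cong H^q_\d(C^{p,\bullet})=H^q(\mU,\Om_X^p)$.

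With the $E_1$-pages (after taking $\d$-cohomology) identified, the spectral sequences of $K^{\bullet,\bullet}$ and $C^{\bullet,\bullet}$ agree from $E_1$ onward, so they converge to isomorphic limits, giving $H^\bullet(\tot^\bullet(K^{\bullet,\bullet}))\cong H^\bullet(\tot^\bullet(C^{\bullet,\bullet}))\cong\Hdr^\bullet(X)$ by Lemma~\ref{lem:dRCechDblComplex}. Alternatively, and perhaps more cleanly, one can phrase this without spectral sequences: the inclusion of complexes of sheaves $\mK_X^\bullet((s+p)V_\bullet)$-type data is a quasi-isomorphism onto $j_*\Om_U^\bullet$ on each patch (Lemma~\ref{lem:quasiiso} via GAGA, as in the proof of Theorem~\ref{thm:cohOpenGlobalSec}), and one patches these quasi-isomorphisms through the \v{C}ech construction; a map of double complexes that is a quasi-isomorphism on each row induces an isomorphism on total cohomology.

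The main obstacle is making rigorous the claim that the rows really do coincide — i.e. that for every multi-index $i_0<\cdots<i_q$ the space $\G(X,\Om_X^p((s+p)(H_{i_0}\cup\cdots\cup H_{i_q})))$ equals $\G(U_{i_0\cdots i_q},\Om_X^p)$. This requires that $s+p$ exceeds the regularity of $\Om_X^p$ twisted appropriately along the divisor $H_{i_0}\cup\cdots\cup H_{i_q}$, whose degree as a line bundle is $\Oh_X(q+1)$ with $q\le m$; the uniform bound $s\ge m(em+1)D$ is exactly what guarantees acyclicity of $\Om_X^p((s+p)(H_{i_0}\cup\cdots\cup H_{i_q}))$ for all relevant $p$ and $q$ simultaneously, which in turn forces the global-sections description. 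Modulo this bookkeeping, which is essentially a repetition of the argument in Theorem~\ref{thm:cohOpenGlobalSec} carried out over the cover rather than over a single $U$, the result follows formally.
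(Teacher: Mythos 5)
Your primary argument contains a genuine error: you claim that $\G(X,\Om_X^p((s+p)(H_{i_0}\cup\cdots\cup H_{i_q})))$ \emph{equals} $\G(U_{i_0\cdots i_q},\Om_X^p)$ once $s$ is large. This cannot hold: the left-hand side is finite-dimensional (global sections of a coherent sheaf on the projective variety $X$), whereas the right-hand side is infinite-dimensional (regular $p$-forms on the affine variety $U_{i_0\cdots i_q}$), since such forms may have poles of arbitrarily high order along $\bigcup_\nu H_{i_\nu}$. Castelnuovo--Mumford regularity controls vanishing of higher cohomology and global generation, not pole orders of arbitrary global sections on an open subset. Thus the inclusion $K^{p,\bullet}\hookrightarrow C^{p,\bullet}$ is a strict inclusion of complexes, the $E_1$-pages of the spectral sequences filtering by $\d$ first (as you propose) do not coincide term-by-term, and you have not shown the row inclusion to be a quasi-isomorphism — so the argument as written does not close.

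The paper runs the \emph{other} spectral sequence, taking $\ud$-cohomology first. For fixed $q$, the column $(K^{\bullet,q},\ud)$ is a direct sum over multi-indices of the complexes $K_s^\bullet$ of Theorem~\ref{thm:cohOpenGlobalSec}, and that theorem (where the regularity bound genuinely enters) gives that each such complex has cohomology $\Hdr^\bullet(U_{i_0\cdots i_q})$. The corresponding column of $C^{\bullet,\bullet}$ computes the same thing because each $U_{i_0\cdots i_q}$ is affine. Hence the inclusion is an isomorphism on $''E_1$, and one then transfers this to the cohomology of the total complexes via the comparison theorem for spectral sequences with bounded filtration. Your closing ``alternative'' remark gestures at exactly this mechanism, but you speak of a quasi-isomorphism ``on each row'' when you mean on each column; it is precisely the \v{C}ech direction where your term-by-term identification breaks down.
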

\begin{proof}
Clearly, the inclusion
$K^{\bullet,\bullet} \hookrightarrow C^{\bullet,\bullet}$ induces a morphism of
spectral sequences
$''E_r(K^{\bullet,\bullet})\rightarrow\, ''E_r(C^{\bullet,\bullet})$
between the second spectral sequences of these double complexes.
Theorem~\ref{thm:cohOpenGlobalSec} implies that this is an isomorphism
$$
''E_1^{p,q}(K^{\bullet,\bullet})\simeq\bigoplus_{i_0<\cdots<i_q}
\Hdr^p(U_{i_0\cdots i_q})=\,''E_1^{p,q}(C^{\bullet,\bullet})
$$
on the first level of these spectral sequences.
According to~\cite[Theorem~3.5]{mcc:85}, this induces an isomorphism on their
$\infty$-terms and, since the corresponding filtrations are bounded, also on the
cohomologies of the total complexes, 
so $H^\bullet(\tot^\bullet(K^{\bullet,\bullet}))\simeq
H^\bullet(\tot^\bullet(C^{\bullet,\bullet}))$.
Together with Lemma~\ref{lem:dRCechDblComplex} this completes the proof.
\end{proof}

\begin{proposition}\label{prop:cohomology}
Assume that one is given homogeneous polynomials of degree at most $d$ defining
the smooth variety $X\subseteq\proj^n$, and linear forms defining the hyperplane 
sections~$H_0,\ldots,H_m$ such that
$\bigcup_i H_i$ has normal crossings.
Then one can compute $\Hdr(X)$ in parallel time $(d\log n)^{\Oh(1)}$ and
sequential time $d^{\Oh(n^4)}$.
\end{proposition}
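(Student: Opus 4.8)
The plan is to apply Lemma~\ref{lem:dRDblComplex}, which identifies $\Hdr^\bullet(X)$ with the cohomology of the total complex of the finite-dimensional double complex $K^{\bullet,\bullet}$ built from the patches $U_{i_0\cdots i_q}$. Since each $\tot^k(K^{\bullet,\bullet})=\bigoplus_{p+q=k}K^{p,q}$ is a finite-dimensional $\C$-vector space and the total differential $\ud\pm\d$ is $\C$-linear, computing this cohomology is --- once explicit bases of all the $K^{p,q}$ and the matrices of $\ud$ and $\d$ are available --- exactly the kernel/image/quotient linear algebra of~\S\ref{ss:linAlg}. So the proof amounts to making the building blocks of $K^{\bullet,\bullet}$ explicit together with a size estimate. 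Recall that $\mU=\{U_i\mid 0\le i\le m\}$ with $U_i=X\setminus H_i$ is an open affine cover of $X$ (so $H_0\cap\cdots\cap H_m=\emptyset$), and that for every subtuple $i_0<\cdots<i_q$ the divisor $H_{i_0}\cup\cdots\cup H_{i_q}$ inherits normal crossings from $\bigcup_iH_i$.

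First I would produce, column by column, the vertical data. For a fixed subtuple $i_0<\cdots<i_q$ the column $K^\bullet(H_{i_0}\cup\cdots\cup H_{i_q})$ is precisely the complex $K_s^\bullet$ of Theorem~\ref{thm:cohOpenGlobalSec} attached to the hypersurface complement $U_{i_0\cdots i_q}=X\setminus(H_{i_0}\cup\cdots\cup H_{i_q})$. Working in the affine chart $\{\ell_{i_0}\ne 0\}\simeq\C^n$ (so that $H_{i_0}$ plays the role of the distinguished hyperplane $\mZ(X_0)$ in~\S\ref{ss:compCohHypCompl}), the algorithm underlying Proposition~\ref{prop:compCohCompl} --- Sz{\'a}nt{\'o}'s squarefree regular chains together with the linear systems~\eqref{eq:prem} and~\eqref{eq:formIdentLinSys} --- computes, for each $p$, a basis of $K^p(H_{i_0}\cup\cdots\cup H_{i_q})$ realized as a complementary subspace, in the ambient space $\Om^p$ of homogeneous $p$-forms of~\S\ref{ss:compCohHypCompl}, of the solution space $N^p$ of a linear system, together with the matrix of $\ud\colon K^{p,q}\to K^{p+1,q}$, obtained by differentiating the homogeneous representatives and reducing modulo $N^p$. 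The basis vectors come out as explicit homogeneous rational $p$-forms $\a/f^t$ in the fixed coordinates $X_0,\ldots,X_n$, hence are directly comparable across columns. Summing over the $\binom{m+1}{q+1}$ subtuples assembles $K^{p,q}$ and the differential $\ud$.

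Next I would compute the horizontal (\v{C}ech) differential $\d\colon K^{p,q}\to K^{p,q+1}$. By~\eqref{eq:cechDiff} it is an alternating sum of the maps $K^p(H_{i_0}\cup\cdots\cup\widehat{H_{i_\nu}}\cup\cdots\cup H_{i_{q+1}})\to K^p(H_{i_0}\cup\cdots\cup H_{i_{q+1}})$, each of which is simply the inclusion of subsheaves $\Om_X^p\big((s+p)(H_{i_0}\cup\cdots\cup\widehat{H_{i_\nu}}\cup\cdots\cup H_{i_{q+1}})\big)\hookrightarrow\Om_X^p\big((s+p)(H_{i_0}\cup\cdots\cup H_{i_{q+1}})\big)$ on global sections; on homogeneous representatives it is the rewriting $\a/f^t\mapsto(\a\,\ell_{i_\nu}^{t})/(f\ell_{i_\nu})^{t}$ that creates the admissible pole along $H_{i_\nu}$ (note $\D(\a\ell_{i_\nu}^t)=\ell_{i_\nu}^t\D(\a)=0$, and the pole order stays $\le s+p$, which is what keeps the target finite-dimensional --- here Lemma~\ref{lem:restrSurj}, hence Theorem~\ref{thm:boundReg}, enters). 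Reducing the result modulo the already-computed $N^p$ of the larger subtuple expresses it in the chosen basis and gives the matrix of $\d$. With $\ud$ and $\d$ in hand I form $\tot^\bullet(K^{\bullet,\bullet})$, read off its cohomology by~\S\ref{ss:linAlg}, and obtain $\Hdr^\bullet(X)$ from Lemma~\ref{lem:dRDblComplex}; the resulting basis vectors have the shape described in~\eqref{eq:outputForm}.

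For the complexity there is essentially nothing new beyond Proposition~\ref{prop:compCohCompl}: there are $2^{m+1}-1\le d^{n+1}$ subtuples (we may assume $d\ge 2$; for $d=1$ the variety $X$ is linear and the answer is immediate), each column is handled in parallel time $(d\log n)^{\Oh(1)}$ and sequential time $d^{\Oh(n^4)}$ exactly as there --- recall $s=\Oh(n^3\deg X)=\Oh(n^3d^n)$ and that Sz{\'a}nt{\'o}'s algorithm produces regular chains of degree $d^{\Oh(n^2)}$, so the linear systems have size $d^{\Oh(n^4)}$ --- and the resulting matrices of $\ud$ and $\d$ are complex matrices of size $d^{\Oh(n^4)}$, on which the algorithms of~\S\ref{ss:linAlg} run within the same bounds. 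The step I expect to require the most care is not the cohomology computation, which is pure linear algebra, but the careful chaining of the reductions: verifying that the \v{C}ech maps genuinely land in the prescribed spaces $K^p(\cdots)$ with pole order $\le s+p$, that the various affine charts $\{\ell_{i_0}\ne 0\}$ used internally for the different columns are reconciled by the common homogeneous representatives, and that the size bounds of Proposition~\ref{prop:compCohCompl} survive the $2^{\Oh(n)}$-fold summation --- all of which is routine once organized.
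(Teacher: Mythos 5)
Your proof is correct and follows the same route as the paper: reduce via Lemma~\ref{lem:dRDblComplex} to the total complex of $K^{\bullet,\bullet}$, build bases for each $K^{p,q}$ as in \S\ref{ss:compCohHypCompl} (one application of Proposition~\ref{prop:compCohCompl}'s machinery per subtuple), and finish with the linear algebra of \S\ref{ss:linAlg}. The one place you go beyond the paper's terse ``the differential $d_T$ is easily computable'' is the explicit recipe for $\d$ as the rewriting $\a/f^t\mapsto(\a\,\ell_{i_\nu}^t)/(f\ell_{i_\nu})^t$ followed by reduction modulo the larger $N^p$; this is a correct and welcome expansion of the argument, not a deviation from it.
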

\begin{proof}
By Lemma~\ref{lem:dRDblComplex} one has to compute the cohomology of the
total complex $T^k:=\tot^k (K^{\bullet,\bullet})=\bigoplus_{p+q=k}K^{p,q}$
with the differential
$$
d_T\colon T^k\rightarrow T^{k+1},\ (\om_{p,q})_{p+q=k}\mapsto
\left(\ud \om_{p-1,q}+(-1)^p\d\om_{p,q-1}\right)_{p+q=k+1}.
$$
As in \S\ref{ss:compCohHypCompl} one can compute bases for $K^{p,q}$ and hence
for $T^\bullet$ within the claimed bounds. Since the differential $d_T$ is
easily computable and the vector spaces have dimension $d^{\Oh(n^4)}$, the
cohomology of this complex can be computed using the
algorithms of \S\ref{ss:linAlg}.
\end{proof}

We conclude this section with an
\begin{example}
Consider the plane curve $X=\mZ(X_0^3+X_1^3+X_2^3)\subseteq\proj^2$. One easily
checks that $X$ is smooth, and the well known genus formula shows that its
Betti numbers are
$$
b_0(X)=b_2(X)=1,\quad b_1(X)=2.
$$
Furthermore, the hyperplanes $\tilde{H}_0:=\mZ(X_0)$ and $\tilde{H}_1:=\mZ(X_1)$
intersect $X$ transversally, and their hyperplane sections $H_0:=\mZ_X(X_0)$
and $H_1:=\mZ_X(X_1)$ do not intersect. Hence, $U_i:=X\setminus H_i$, $i\in\{0,1\}$,
form an open cover of $X$ satisfying our assumptions.

Our theorems yield the lower bound $s\ge 6$, but let us find the smallest
possible $s$. The bounds of Theorem \ref{thm:boundReg} are $\reg(\Oh_X)\le 2$
and $\reg(\Om_X)\le 6$, but the more precise Proposition~\ref{prop:reg1Forms}
yields $\reg(\Om_X)\le 5$. However, computations with Macaulay2~\cite{M2} give
$H^1(\Oh_X(1))=H^1(\Om_X(1))=0$ and $H^1(\Oh_X)\ne 0$, $H^1(\Om_X)\ne 0$, hence
$\reg(\Oh_X)=\reg(\Om_X)=2$ (since $\dim X=1$). A close look at the proofs shows
that the conclusions of Theorem~\ref{thm:cohOpenGlobalSec} and
Lemma~\ref{lem:dRDblComplex} hold for $s=1$. But Macaulay2 also computes
$\dim H^0(\Om_{\proj^2}(2))=3$ and $\dim H^0(\Om_X(2))=6$, so the restriction map
in Lemma~\ref{lem:restrSurj} cannot be surjective.
In order to get surjectivity, note that $\reg(\mI)=3$, thus the restriction map 
$\G(\proj^n,\Oh_{\proj^n}(t\tilde{H}_0))\to\G(X,\Oh_{X}(tH_0))$ is surjective for
$t\ge 2$. As for the one-forms, we compute $\reg(\mI\otimes\Om_{\proj^2})=\reg(\mI/\mI^2)=5$.
It follows that the corresponding restriction map is surjective for $t\ge 4$, so
we can choose $s=3$.

The double complex we have to consider is
$$
\xymatrix@M+3pt{
K^{0,1} \ar[r]^{\ud^1}  & K^{1,1} \\
K^{0,0} \ar[r]^{\ud^0} \ar[u]^{\d^0} & K^{1,0} \ar[u]^{\d^1}, \\
}
$$
where
$$
K^{0,0}=K^0(H_0)\oplus K^0(H_1)=\G(X,\Oh_X(3H_0))\oplus\G(X,\Oh_X(3H_1)),
$$
$$
K^{1,0}=K^1(H_0)\oplus K^1(H_1)=\G(X,\Om_X(4H_0))\oplus\G(X,\Om_X(4H_1)),
$$
$$
K^{0,1}=K^0(H_0\cup H_1)=\G(X,\Oh_X(3H_0+3H_1)),
$$
$$
K^{1,1}=K^1(H_0\cup H_1)=\G(X,\Om_X(4H_0+4H_1)),
$$
together with the differentials
$$
\d^0(f,g)=g-f,\quad \ud^0(f,g)=(\ud f,\ud g),\quad
\d^1(\om,\eta)=\eta-\om,\quad \ud^1(f)=\ud f.
$$
The dimensions of these vector spaces are
$$
\dim K^{0,0}=\dim K^{0,1}=18,\quad \dim K^{1,0}=\dim K^{1,1}=24.
$$
Put $\Om_{ij}=X_i\ud X_j-X_j\ud X_i$ (for simplicity we write $X_i$ instead of
$\overline{X}_i$). Then, bases for these spaces are given by
\begin{align*}
K^0(H_0): \quad & \frac{1}{X_0^3}
(X_0^3,X_0^2X_1,X_0^2X_2,X_0X_1^2,X_0X_1X_2,X_0X_2^2,X_1^3,X_1^2X_2,X_1X_2^2) \\
K^0(H_1): \quad & \frac{1}{X_1^3} 
(X_0^3,X_0^2X_1,X_0^2X_2,X_0X_1^2,X_0X_1X_2,X_0X_2^2,X_1^3,X_1^2X_2,X_1X_2^2) \\
K^1(H_0): \quad & \frac{1}{X_0^4} 
(X_0^2\Om_{01},X_0X_1\Om_{01},X_0X_2\Om_{01},X_1^2\Om_{01},X_1X_2\Om_{01},X_2^2\Om_{01}, \\
 & \qquad\ X_0^2\Om_{02},X_0X_1\Om_{02},X_0X_2\Om_{02},X_1^2\Om_{02},X_1X_2\Om_{02},X_1X_2\Om_{12}) \\
K^1(H_1): \quad & \frac{1}{X_1^4} 
(X_0^2\Om_{01},X_0X_1\Om_{01},X_0X_2\Om_{01},X_1^2\Om_{01},X_1X_2\Om_{01},X_2^2\Om_{01}, \\
 & \qquad\  X_0^2\Om_{02},X_0X_1\Om_{02},X_0X_2\Om_{02},X_1^2\Om_{02},X_1X_2\Om_{02},X_1X_2\Om_{12}) \\
K^0(H_0\cup H_1): & \\
 \frac{1}{(X_0X_1)^3} &
(X_0^6,X_0^5X_1,X_0^5X_2,X_0^4X_1^2,X_0^4X_1X_2,X_0^4X_2^2,X_0^3X_1^3,X_0^3X_1^2X_2, \\
 & X_0^3X_1X_2^2, X_0^2X_1^4, X_0^2X_1^3X_2,X_0^2X_1^2X_2^2,X_0X_1^5,X_0X_1^4X_2, \\
 & X_0X_1^3X_2^2,X_1^6,X_1^5X_2,X_1^4X_2^2) \\
K^1(H_0\cup H_1): & \\
 \frac{\Om_{01}}{(X_0X_1)^4} &
(X_0^6,X_0^5X_1,X_0^5X_2,X_0^4X_1^2,X_0^4X_1X_2,X_0^4X_2^2,X_0^3X_1^3,X_0^3X_1^2X_2, \\
 & X_0^3X_1X_2^2, X_0^2X_1^4, X_0^2X_1^3X_2,X_0^2X_1^2X_2^2,X_0X_1^5,X_0X_1^4X_2, \\
 & X_0X_1^3X_2^2,X_1^6,X_1^5X_2,X_1^4X_2^2), \\
 \frac{\Om_{02}}{(X_0X_1)^4} &
(X_0^6,X_0^5X_1,X_0^5X_2,X_0^4X_1^2,X_0^4X_1X_2,X_0^3X_1^2X_2) \\
\end{align*}

The corresponding total complex $T^\bullet=\tot^\bullet(K^{\bullet,\bullet})$ is
$$
T^0=K^{0,0}\stackrel{\ud^{\tot,0}}{\longrightarrow} T^1=K^{0,1}\oplus K^{1,0}
\stackrel{\ud^{\tot,1}}{\longrightarrow} T^2=K^{1,1},
$$
where
$$
\ud^{\tot,0}=(\d^0,\ud^0),\quad \ud^{\tot,1}=\ud^1-\d^1.
$$
The matrix of $\ud^{\tot,0}$ with respect to the given bases is
$$
{\tiny
\left(
\begin{array}{cccccccccccccccccc}
 0 & 0 & 0 & 0 & 0 & 0 & 0 & 0 & 0 & 1 & 0 & 0 & 0 & 0 & 0 & 0 & 0 & 0 \\
 0 & 0 & 0 & 0 & 0 & 0 & 0 & 0 & 0 & 0 & 1 & 0 & 0 & 0 & 0 & 0 & 0 & 0 \\
 0 & 0 & 0 & 0 & 0 & 0 & 0 & 0 & 0 & 0 & 0 & 1 & 0 & 0 & 0 & 0 & 0 & 0 \\
 0 & 0 & 0 & 0 & 0 & 0 & 0 & 0 & 0 & 0 & 0 & 0 & 1 & 0 & 0 & 0 & 0 & 0 \\
 0 & 0 & 0 & 0 & 0 & 0 & 0 & 0 & 0 & 0 & 0 & 0 & 0 & 1 & 0 & 0 & 0 & 0 \\
 0 & 0 & 0 & 0 & 0 & 0 & 0 & 0 & 0 & 0 & 0 & 0 & 0 & 0 & 1 & 0 & 0 & 0 \\
 -1 & 0 & 0 & 0 & 0 & 0 & 0 & 0 & 0 & 0 & 0 & 0 & 0 & 0 & 0 & 1 & 0 & 0 \\
 0 & 0 & 0 & 0 & 0 & 0 & 0 & 0 & 0 & 0 & 0 & 0 & 0 & 0 & 0 & 0 & 1 & 0 \\
 0 & 0 & 0 & 0 & 0 & 0 & 0 & 0 & 0 & 0 & 0 & 0 & 0 & 0 & 0 & 0 & 0 & 1 \\
 0 & -1 & 0 & 0 & 0 & 0 & 0 & 0 & 0 & 0 & 0 & 0 & 0 & 0 & 0 & 0 & 0 & 0 \\
 0 & 0 & -1 & 0 & 0 & 0 & 0 & 0 & 0 & 0 & 0 & 0 & 0 & 0 & 0 & 0 & 0 & 0 \\
 0 & 0 & 0 & 0 & 0 & 0 & 0 & 0 & 0 & 0 & 0 & 0 & 0 & 0 & 0 & 0 & 0 & 0 \\
 0 & 0 & 0 & -1 & 0 & 0 & 0 & 0 & 0 & 0 & 0 & 0 & 0 & 0 & 0 & 0 & 0 & 0 \\
 0 & 0 & 0 & 0 & -1 & 0 & 0 & 0 & 0 & 0 & 0 & 0 & 0 & 0 & 0 & 0 & 0 & 0 \\
 0 & 0 & 0 & 0 & 0 & -1 & 0 & 0 & 0 & 0 & 0 & 0 & 0 & 0 & 0 & 0 & 0 & 0 \\
 0 & 0 & 0 & 0 & 0 & 0 & -1 & 0 & 0 & 0 & 0 & 0 & 0 & 0 & 0 & 0 & 0 & 0 \\
 0 & 0 & 0 & 0 & 0 & 0 & 0 & -1 & 0 & 0 & 0 & 0 & 0 & 0 & 0 & 0 & 0 & 0 \\
 0 & 0 & 0 & 0 & 0 & 0 & 0 & 0 & -1 & 0 & 0 & 0 & 0 & 0 & 0 & 0 & 0 & 0 \\
\hline
 0 & 1 & 0 & 0 & 0 & 0 & 0 & 0 & 0 & 0 & 0 & 0 & 0 & 0 & 0 & 0 & 0 & 0 \\
 0 & 0 & 0 & 2 & 0 & 0 & 0 & 0 & 0 & 0 & 0 & 0 & 0 & 0 & 0 & 0 & 0 & 0 \\
 0 & 0 & 0 & 0 & 1 & 0 & 0 & 0 & 0 & 0 & 0 & 0 & 0 & 0 & 0 & 0 & 0 & 0 \\
 0 & 0 & 0 & 0 & 0 & 0 & 3 & 0 & 0 & 0 & 0 & 0 & 0 & 0 & 0 & 0 & 0 & 0 \\
 0 & 0 & 0 & 0 & 0 & 0 & 0 & 2 & 0 & 0 & 0 & 0 & 0 & 0 & 0 & 0 & 0 & 0 \\
 0 & 0 & 0 & 0 & 0 & 0 & 0 & 0 & 1 & 0 & 0 & 0 & 0 & 0 & 0 & 0 & 0 & 0 \\
 0 & 0 & 0 & 0 & 0 & 0 & 0 & 0 & 0 & 0 & 0 & 0 & 0 & 0 & 0 & 0 & 0 & 0 \\
 0 & 0 & 0 & 0 & 1 & 0 & 0 & 0 & 0 & 0 & 0 & 0 & 0 & 0 & 0 & 0 & 0 & 0 \\
 0 & 0 & 0 & 0 & 0 & 2 & 0 & 0 & 0 & 0 & 0 & 0 & 0 & 0 & 0 & 0 & 0 & 0 \\
 0 & 0 & 1 & 0 & 0 & 0 & 0 & 1 & 0 & 0 & 0 & 0 & 0 & 0 & 0 & 0 & 0 & 0 \\
 0 & 0 & 0 & 0 & 0 & 0 & 0 & 0 & 1 & 0 & 0 & 0 & 0 & 0 & 0 & 0 & 0 & 0 \\
 0 & 0 & 0 & 0 & 0 & 0 & 0 & 0 & 0 & 0 & 0 & 0 & 0 & 0 & 0 & 0 & 0 & 0 \\
 0 & 0 & 0 & 0 & 0 & 0 & 0 & 0 & 0 & -3 & 0 & 0 & 0 & 0 & 0 & 0 & 0 & 0 \\
 0 & 0 & 0 & 0 & 0 & 0 & 0 & 0 & 0 & 0 & -2 & 0 & 0 & 0 & 0 & 0 & 0 & 0 \\
 0 & 0 & 0 & 0 & 0 & 0 & 0 & 0 & 0 & 0 & 0 & -1 & 0 & 0 & 0 & 0 & 0 & 0 \\
 0 & 0 & 0 & 0 & 0 & 0 & 0 & 0 & 0 & 0 & 0 & 0 & -1 & 0 & 0 & 0 & 0 & 0 \\
 0 & 0 & 0 & 0 & 0 & 0 & 0 & 0 & 0 & 0 & 0 & 0 & 0 & -2 & 0 & 0 & 0 & 0 \\
 0 & 0 & 0 & 0 & 0 & 0 & 0 & 0 & 0 & 0 & 0 & 0 & 0 & 0 & -2 & 0 & 0 & 0 \\
 0 & 0 & 0 & 0 & 0 & 0 & 0 & 0 & 0 & 0 & 0 & 1 & 0 & 0 & 0 & 0 & -1 & 0 \\
 0 & 0 & 0 & 0 & 0 & 0 & 0 & 0 & 0 & 0 & 0 & 0 & 0 & 0 & 0 & 0 & 0 & 0 \\
 0 & 0 & 0 & 0 & 0 & 0 & 0 & 0 & 0 & 0 & 0 & 0 & 0 & 0 & 0 & 0 & 0 & 0 \\
 0 & 0 & 0 & 0 & 0 & 0 & 0 & 0 & 0 & 0 & 0 & 0 & 0 & 1 & 0 & 0 & 0 & 0 \\
 0 & 0 & 0 & 0 & 0 & 0 & 0 & 0 & 0 & 0 & 0 & 0 & 0 & 0 & 1 & 0 & 0 & 0 \\
 0 & 0 & 0 & 0 & 0 & 0 & 0 & 0 & 0 & 0 & 0 & 0 & 0 & 0 & 0 & 0 & 0 & 2
\end{array}
\right).
}
$$
The matrix of $\ud^{\tot,1}$ is the transpose of
$$
{\tiny
\left(
\begin{array}{cccccccccccccccccccccccc}
 -3 & 0 & 0 & 0 & 0 & 0 & 0 & 0 & 0 & 0 & 0 & 0 & 0 & 0 & 0 & 0 & 0 & 0 & 0 & 0 & 0 & 0 & 0 & 0 \\
 0 & -2 & 0 & 0 & 0 & 0 & 0 & 0 & 0 & 0 & 0 & 0 & 0 & 0 & 0 & 0 & 0 & 0 & 0 & 0 & 0 & 0 & 0 & 0 \\
 0 & 0 & -1 & 0 & 0 & 0 & 0 & 0 & 0 & 0 & 0 & 0 & 0 & 0 & 0 & 0 & 0 & 0 & 1 & 0 & 0 & 0 & 0 & 0 \\
 0 & 0 & 0 & -1 & 0 & 0 & 0 & 0 & 0 & 0 & 0 & 0 & 0 & 0 & 0 & 0 & 0 & 0 & 0 & 0 & 0 & 0 & 0 & 0 \\
 0 & 0 & 0 & 0 & -2 & 0 & 0 & 0 & 0 & 0 & 0 & 0 & 0 & 0 & 0 & 0 & 0 & 0 & 0 & 0 & 0 & 1 & 0 & 0 \\
 0 & 0 & 0 & 0 & 0 & -2 & 0 & 0 & 0 & 0 & 0 & 0 & 0 & 0 & 0 & 0 & 0 & 0 & 0 & 0 & 0 & 0 & 1 & 0 \\
 0 & 0 & 0 & 0 & 0 & 0 & 0 & 0 & 0 & 0 & 0 & 0 & 0 & 0 & 0 & 0 & 0 & 0 & 0 & 0 & 0 & 0 & 0 & 0 \\
 0 & 0 & 0 & 0 & 0 & 0 & 0 & 0 & 0 & 0 & 0 & 0 & 0 & 0 & 0 & 0 & 0 & 0 & -1 & 0 & 0 & 0 & 0 & 0 \\
 0 & 0 & 0 & 0 & 0 & 0 & 0 & 0 & -2 & 0 & 0 & 0 & 0 & 0 & 0 & 0 & 0 & 0 & 0 & 0 & 0 & 0 & 0 & 2 \\
 0 & 0 & 0 & 0 & 0 & 0 & 0 & 0 & 0 & 1 & 0 & 0 & 0 & 0 & 0 & 0 & 0 & 0 & 0 & 0 & 0 & 0 & 0 & 0 \\
 0 & 0 & 0 & 0 & 0 & 0 & 0 & -1 & 0 & 0 & 0 & 0 & 0 & 0 & 0 & 0 & 1 & 0 & 1 & 0 & 0 & 0 & 0 & 0 \\
 0 & 0 & 0 & 0 & 0 & 0 & 0 & 0 & 0 & 0 & 0 & 1 & 0 & 0 & 0 & 0 & 0 & 0 & 0 & 0 & -2 & 0 & 0 & 0 \\
 0 & 0 & 0 & 0 & 0 & 0 & 0 & 0 & 0 & 0 & 0 & 0 & 2 & 0 & 0 & 0 & 0 & 0 & 0 & 0 & 0 & 0 & 0 & 0 \\
 0 & 0 & 0 & 0 & 0 & 0 & 0 & 0 & 0 & 0 & 0 & 0 & 0 & 2 & 0 & 0 & 0 & 0 & 0 & 0 & 0 & -1 & 0 & 0 \\
 0 & 0 & 0 & 0 & 0 & 0 & 0 & 0 & 0 & 0 & 0 & 0 & 0 & 0 & 2 & 0 & 0 & 0 & 0 & 0 & 0 & 0 & -2 & 0 \\
 0 & 0 & 0 & 0 & 0 & 0 & 0 & 0 & 0 & 0 & 0 & 0 & 0 & 0 & 0 & 3 & 0 & 0 & 0 & 0 & 0 & 0 & 0 & 0 \\
 0 & 0 & 0 & 0 & 0 & 0 & 0 & -1 & 0 & 0 & 0 & 0 & 0 & 0 & 0 & 0 & 3 & 0 & 1 & 0 & 0 & 0 & 0 & 0 \\
 0 & 0 & 0 & 0 & 0 & 0 & 0 & 0 & 0 & 0 & 0 & 0 & 0 & 0 & 0 & 0 & 0 & 2 & 0 & 0 & 0 & 0 & 0 & -1 \\
\hline
 0 & 0 & 0 & 0 & 0 & 0 & 0 & 0 & 0 & 1 & 0 & 0 & 0 & 0 & 0 & 0 & 0 & 0 & 0 & 0 & 0 & 0 & 0 & 0 \\
 0 & 0 & 0 & 0 & 0 & 0 & 0 & 0 & 0 & 0 & 0 & 0 & 1 & 0 & 0 & 0 & 0 & 0 & 0 & 0 & 0 & 0 & 0 & 0 \\
 0 & 0 & 0 & 0 & 0 & 0 & 0 & 0 & 0 & 0 & 0 & 0 & 0 & 1 & 0 & 0 & 0 & 0 & 0 & 0 & 0 & 0 & 0 & 0 \\
 0 & 0 & 0 & 0 & 0 & 0 & 0 & 0 & 0 & 0 & 0 & 0 & 0 & 0 & 0 & 1 & 0 & 0 & 0 & 0 & 0 & 0 & 0 & 0 \\
 0 & 0 & 0 & 0 & 0 & 0 & 0 & 0 & 0 & 0 & 0 & 0 & 0 & 0 & 0 & 0 & 1 & 0 & 0 & 0 & 0 & 0 & 0 & 0 \\
 0 & 0 & 0 & 0 & 0 & 0 & 0 & 0 & 0 & 0 & 0 & 0 & 0 & 0 & 0 & 0 & 0 & 1 & 0 & 0 & 0 & 0 & 0 & 0 \\
 0 & 0 & 0 & 0 & 0 & 0 & 0 & 0 & 0 & 0 & 1 & 0 & 0 & 0 & 0 & 0 & 0 & 0 & 0 & -1 & 0 & 0 & 0 & 0 \\
 0 & 0 & 0 & 0 & 0 & 0 & 0 & 0 & 0 & 0 & 0 & 0 & 0 & 1 & 0 & 0 & 0 & 0 & 0 & 0 & 0 & -1 & 0 & 0 \\
 0 & 0 & 0 & 0 & 0 & 0 & 0 & 0 & 0 & 0 & 0 & 0 & 0 & 0 & 1 & 0 & 0 & 0 & 0 & 0 & 0 & 0 & -1 & 0 \\
 0 & 0 & 0 & 0 & 0 & 0 & 0 & -1 & 0 & 0 & 0 & 0 & 0 & 0 & 0 & 0 & 1 & 0 & 1 & 0 & 0 & 0 & 0 & 0 \\
 0 & 0 & 0 & 0 & 0 & 0 & 0 & 0 & 0 & 0 & 0 & 0 & 0 & 0 & 0 & 0 & 0 & 1 & 0 & 0 & 0 & 0 & 0 & -1 \\
 0 & 0 & 0 & 0 & 0 & 0 & 0 & 0 & 0 & 0 & 0 & -1 & 0 & 0 & 0 & 0 & 0 & 0 & 0 & 0 & 1 & 0 & 0 & 0 \\
 -1 & 0 & 0 & 0 & 0 & 0 & 0 & 0 & 0 & 0 & 0 & 0 & 0 & 0 & 0 & 0 & 0 & 0 & 0 & 0 & 0 & 0 & 0 & 0 \\
 0 & -1 & 0 & 0 & 0 & 0 & 0 & 0 & 0 & 0 & 0 & 0 & 0 & 0 & 0 & 0 & 0 & 0 & 0 & 0 & 0 & 0 & 0 & 0 \\
 0 & 0 & -1 & 0 & 0 & 0 & 0 & 0 & 0 & 0 & 0 & 0 & 0 & 0 & 0 & 0 & 0 & 0 & 0 & 0 & 0 & 0 & 0 & 0 \\
 0 & 0 & 0 & -1 & 0 & 0 & 0 & 0 & 0 & 0 & 0 & 0 & 0 & 0 & 0 & 0 & 0 & 0 & 0 & 0 & 0 & 0 & 0 & 0 \\
 0 & 0 & 0 & 0 & -1 & 0 & 0 & 0 & 0 & 0 & 0 & 0 & 0 & 0 & 0 & 0 & 0 & 0 & 0 & 0 & 0 & 0 & 0 & 0 \\
 0 & 0 & 0 & 0 & 0 & -1 & 0 & 0 & 0 & 0 & 0 & 0 & 0 & 0 & 0 & 0 & 0 & 0 & 0 & 0 & 0 & 0 & 0 & 0 \\
 0 & 0 & 0 & 0 & 0 & 0 & 0 & 0 & 0 & 0 & 0 & 0 & 0 & 0 & 0 & 0 & 0 & 0 & -1 & 0 & 0 & 0 & 0 & 0 \\
 0 & 0 & 0 & 0 & 0 & 0 & 0 & 0 & 0 & 0 & 0 & 0 & 0 & 0 & 0 & 0 & 0 & 0 & 0 & -1 & 0 & 0 & 0 & 0 \\
 0 & 0 & 0 & 0 & 0 & 0 & 0 & 0 & 0 & 0 & 0 & 0 & 0 & 0 & 0 & 0 & 0 & 0 & 0 & 0 & -1 & 0 & 0 & 0 \\
 0 & 0 & 0 & 0 & 0 & 0 & 0 & 0 & 0 & 0 & 0 & 0 & 0 & 0 & 0 & 0 & 0 & 0 & 0 & 0 & 0 & -1 & 0 & 0 \\
 0 & 0 & 0 & 0 & 0 & 0 & 0 & 0 & 0 & 0 & 0 & 0 & 0 & 0 & 0 & 0 & 0 & 0 & 0 & 0 & 0 & 0 & -1 & 0 \\
 0 & 0 & 0 & 0 & 0 & 0 & 0 & 0 & 1 & 0 & 0 & 0 & 0 & 0 & 0 & 0 & 0 & 0 & 0 & 0 & 0 & 0 & 0 & -1
\end{array}
\right).
}
$$
Since the rank of $\ud^{\tot,1}$ is 23, $\Hdr^2(X)=H^2(T^\bullet)$ has the right
dimension, and since the seventh column of its matrix is zero, the corresponding
basis vector is not in the image, hence $\Hdr^2(X)$ is spanned by
$$
\frac{\Om_{01}}{X_0X_1}=\frac{\ud X_1}{X_1}-\frac{\ud X_0}{X_0}\in K^1(H_0\cup H_1).
$$
The nullity of $\ud^{\tot,1}$ is 19, and a basis for its kernel is given by the
columns of
$$
{\tiny
\left(
\begin{array}{ccccccccccccccccccc}
 0 & 0 & 0 & 0 & 0 & 0 & 0 & 0 & 0 & -1 & 0 & 0 & 0 & 0 & 0 & 0 & 0 & 0 & 0 \\
 0 & 0 & 0 & 0 & 0 & 0 & 0 & 0 & -1 & 0 & 0 & 0 & 0 & 0 & 0 & 0 & 0 & 0 & 0 \\
 0 & 0 & 0 & 0 & 0 & 0 & 0 & -1 & 0 & 0 & 0 & 0 & 0 & 0 & 0 & 0 & 0 & 0 & 0 \\
 0 & 0 & 0 & 0 & 0 & 0 & -1 & 0 & 0 & 0 & 0 & 0 & 0 & 0 & 0 & 0 & 0 & 0 & 0 \\
 0 & 0 & 0 & 0 & 0 & -1 & 0 & 0 & 0 & 0 & 0 & 0 & 0 & 0 & 0 & 0 & 0 & 0 & 0 \\
 0 & 1 & 0 & 0 & 0 & 0 & 0 & 0 & 0 & 0 & 0 & 0 & 0 & 0 & 0 & 0 & 0 & 0 & 0 \\
 0 & 0 & 0 & 0 & 0 & 0 & 0 & 0 & 0 & 0 & 0 & 0 & 0 & 0 & 0 & 0 & 0 & 0 & 1 \\
 0 & 0 & 0 & 0 & -1 & 0 & 0 & -1 & 0 & 0 & 0 & 0 & 0 & 0 & 0 & 0 & 0 & 0 & 0 \\
 1 & 0 & 0 & 0 & 0 & 0 & 0 & 0 & 0 & 0 & 0 & 0 & 0 & 0 & 0 & 0 & 0 & 0 & 0 \\
 0 & 0 & 0 & 0 & 0 & 0 & 0 & 0 & 0 & 0 & 0 & 0 & 0 & 0 & 0 & 0 & 0 & -1 & 0 \\
 0 & 0 & 0 & 0 & 0 & 0 & 0 & 0 & 0 & 0 & 0 & -1 & 0 & 0 & 1 & 0 & 0 & 0 & 0 \\
 0 & 0 & 0 & -1 & 0 & 0 & 0 & 0 & 0 & 0 & 0 & 0 & 0 & 0 & 0 & 0 & 0 & 0 & 0 \\
 0 & 0 & 0 & 0 & 0 & 0 & 0 & 0 & 0 & 0 & 0 & 0 & 0 & 0 & 0 & 0 & -1 & 0 & 0 \\
 0 & 0 & -1 & 0 & 0 & -1 & 0 & 0 & 0 & 0 & 0 & 0 & 0 & -1 & 0 & 0 & 0 & 0 & 0 \\
 0 & 0 & 0 & 0 & 0 & 0 & 0 & 0 & 0 & 0 & 0 & 0 & -1 & 0 & 0 & 0 & 0 & 0 & 0 \\
 0 & 0 & 0 & 0 & 0 & 0 & 0 & 0 & 0 & 0 & 0 & 0 & 0 & 0 & 0 & -1 & 0 & 0 & 0 \\
 0 & 0 & 0 & 0 & 0 & 0 & 0 & 0 & 0 & 0 & 0 & 0 & 0 & 0 & -1 & 0 & 0 & 0 & 0 \\
 0 & 0 & 0 & 0 & 0 & 0 & 0 & 0 & 0 & 0 & -1 & 0 & 0 & 0 & 0 & 0 & 0 & 0 & 0 \\
 0 & 0 & 0 & 0 & 0 & 0 & 0 & 0 & 0 & 0 & 0 & 0 & 0 & 0 & 0 & 0 & 0 & 1 & 0 \\
 0 & 0 & 0 & 0 & 0 & 0 & 0 & 0 & 0 & 0 & 0 & 0 & 0 & 0 & 0 & 0 & 2 & 0 & 0 \\
 0 & 0 & 2 & 0 & 0 & 2 & 0 & 0 & 0 & 0 & 0 & 0 & 0 & 1 & 0 & 0 & 0 & 0 & 0 \\
 0 & 0 & 0 & 0 & 0 & 0 & 0 & 0 & 0 & 0 & 0 & 0 & 0 & 0 & 0 & 3 & 0 & 0 & 0 \\
 0 & 0 & 0 & 0 & 0 & 0 & 0 & 0 & 0 & 0 & 0 & 0 & 0 & 0 & 2 & 0 & 0 & 0 & 0 \\
 0 & 0 & 0 & 0 & 0 & 0 & 0 & 0 & 0 & 0 & 1 & 0 & 0 & 0 & 0 & 0 & 0 & 0 & 0 \\
 0 & 0 & 0 & 0 & 0 & 0 & 0 & 0 & 0 & 0 & 0 & 0 & 0 & 0 & 0 & 0 & 0 & 0 & 0 \\
 0 & 0 & 0 & 0 & 0 & 0 & 0 & 0 & 0 & 0 & 0 & 0 & 0 & 1 & 0 & 0 & 0 & 0 & 0 \\
 0 & 0 & 0 & 0 & 0 & 0 & 0 & 0 & 0 & 0 & 0 & 0 & 2 & 0 & 0 & 0 & 0 & 0 & 0 \\
 0 & 0 & 0 & 0 & 0 & 0 & 0 & 0 & 0 & 0 & 0 & 1 & 0 & 0 & 0 & 0 & 0 & 0 & 0 \\
 0 & 0 & 0 & 0 & 0 & 0 & 0 & 0 & 0 & 0 & 1 & 0 & 0 & 0 & 0 & 0 & 0 & 0 & 0 \\
 0 & 0 & 0 & -1 & 0 & 0 & 0 & 0 & 0 & 0 & 0 & 0 & 0 & 0 & 0 & 0 & 0 & 0 & 0 \\
 0 & 0 & 0 & 0 & 0 & 0 & 0 & 0 & 0 & 3 & 0 & 0 & 0 & 0 & 0 & 0 & 0 & 0 & 0 \\
 0 & 0 & 0 & 0 & 0 & 0 & 0 & 0 & 2 & 0 & 0 & 0 & 0 & 0 & 0 & 0 & 0 & 0 & 0 \\
 0 & 0 & 0 & 0 & 0 & 0 & 0 & 1 & 0 & 0 & 0 & 0 & 0 & 0 & 0 & 0 & 0 & 0 & 0 \\
 0 & 0 & 0 & 0 & 0 & 0 & 1 & 0 & 0 & 0 & 0 & 0 & 0 & 0 & 0 & 0 & 0 & 0 & 0 \\
 0 & 0 & 0 & 0 & 0 & 2 & 0 & 0 & 0 & 0 & 0 & 0 & 0 & 0 & 0 & 0 & 0 & 0 & 0 \\
 0 & -2 & 0 & 0 & 0 & 0 & 0 & 0 & 0 & 0 & 0 & 0 & 0 & 0 & 0 & 0 & 0 & 0 & 0 \\
 0 & 0 & 0 & 0 & 1 & 0 & 0 & 0 & 0 & 0 & 0 & 0 & 0 & 0 & 0 & 0 & 0 & 0 & 0 \\
 0 & 0 & 0 & 0 & 0 & 0 & 0 & 0 & 0 & 0 & 0 & 0 & 0 & 0 & 0 & 0 & 0 & 0 & 0 \\
 0 & 0 & 0 & 1 & 0 & 0 & 0 & 0 & 0 & 0 & 0 & 0 & 0 & 0 & 0 & 0 & 0 & 0 & 0 \\
 0 & 0 & 1 & 0 & 0 & 0 & 0 & 0 & 0 & 0 & 0 & 0 & 0 & 0 & 0 & 0 & 0 & 0 & 0 \\
 0 & 1 & 0 & 0 & 0 & 0 & 0 & 0 & 0 & 0 & 0 & 0 & 0 & 0 & 0 & 0 & 0 & 0 & 0 \\
 2 & 0 & 0 & 0 & 0 & 0 & 0 & 0 & 0 & 0 & 0 & 0 & 0 & 0 & 0 & 0 & 0 & 0 & 0
\end{array}
\right).
}
$$
The rank of $\ud^{\tot,0}$ is 17, and by some rank computations one sees that
the third and fourth column of the above matrix are a basis of the first
cohomology $\Hdr^1(X)=H^1(T^\bullet)$, which correspond to
$$
\left(-\frac{X_1X_2}{X_0^2},2\frac{X_2\Om_{01}}{X_0^3},\frac{\Om_{02}}{X_1^2}\right),
\left(-\frac{X_2^2}{X_0X_1},-\frac{X_1X_2\Om_{12}}{X_0^4},\frac{X_0X_2\Om_{02}}{X_1^4}\right)
$$
$$
\in K^0(H_0\cup H_1)\oplus K^1(H_0)\oplus K^1(H_1).
$$
Finally, the nullity of $\ud^{\tot,0}$ is 1, and the third last column cancels
with the first, so $\Hdr^0(X)=H^0(T^\bullet)$ is generated by
$$
(1,1)\in K^0(H_0)\oplus K^0(H_1).
$$

Now let us see if we can find the Hodge decomposition
$$
\Hdr^1(X)\simeq H^1(\Oh_X)\oplus H^0(\Om_X)
$$
in our double complex $K^{\bullet,\bullet}$. For that purpose, note that $\d^0$ is
the submatrix of $\ud^{\tot,1}$ above the line, and $\d^1$ is the transpose of
the submatrix of $\ud^{\tot,1}$ below the line. Since the $12^{\rm{th}}$ row of
$\d^0$ is zero, $H^1(\Oh_X)$ is spanned by
$$
\frac{X_2^2}{X_0X_1}\in K^0(H_0\cup H_1).
$$
Furthermore, one finds that the kernel of $\d^1$
and hence $H^0(\Om_X)$ is generated by
$$
\left(\frac{X_2\Om_{01}-X_1\Om_{02}}{X_3^3},\frac{\Om_{02}}{X_1^2}\right)=
\left(\frac{-\Om_{12}}{X_0^2},\frac{\Om_{02}}{X_1^2}\right)\in K^1(H_0)\oplus
K^1(H_1).
$$
So indeed, we find generaters for the sheaf cohomologies of $\Oh_X$ and $\Om_X$
in our vertical subcomplexes $K^{p,\bullet}$. It is interesting to note that
$\left(0,\frac{-\Om_{12}}{X_0^2},\frac{\Om_{02}}{X_1^2}\right)$ can also be chosen
as one vector in a basis of $\Hdr^1(X)$, but $(\frac{X_2^2}{X_0X_1},0,0)$ cannot.
In fact, 
it is easy to see that any element of the form $(f,0,0)$
in the kernel of $\ud^{\tot,1}$ is in the image of $\ud^{\tot,0}$.
\end{example}

\section{Testing Smoothness}\label{se:testSmoothness}
In this section we describe how one can test in parallel polynomial time
whether a closed projective variety $X$ is smooth.

Crucial is Proposition~\ref{prop:generation} implying that if~$X$ is smooth,
then $X$ is scheme-theoretically cut out by hypersurfaces of degree
$\le D=\deg X$.
Using the linear system of equations \eqref{eq:prem} one can compute
a vector space basis $f_1,\ldots,f_N$ of $I_{\le D}$, where $I=I(X)$.
Let $U_i$ be the open subset $X\setminus\mZ(X_i)$ for $0\le i\le n$.
Then (cf.\ \S\ref{ss:basics}) the tangent space of $X$ at each $x\in U_i$ is
$$
T_x X=\mZ(d_x f_1^i,\ldots,d_x f_N^i)\subseteq\C^n\simeq\proj^n\setminus\mZ(X_i).
$$
Hence, if we assume $X$ to be $m$-equidimensional and denote with $L_x^i$
the linear subspace $\mZ(d_x f_1^i,\ldots,d_x f_N^i)$, we have
\begin{equation}\label{eq:equivSmooth}
X\ \textrm{smooth}\quad\iff\quad\bigwedge_i\;\forall\, x\in U_i\ \dim L_x^i=m,
\end{equation}
which is the Jacobian criterion.
Indeed, if $X$ is not smooth at $x\in U_i$, then $\dim L_x^i\ge\dim T_x X > m$, 
since in general $T_x X\subseteq L_x^i$.

Now our algorithm reads as follows.
\\

\noindent {\bf Algorithm Smoothness Test}\\

\noindent {\tt input} $X$ given by homogeneous polynomials of degree $\le d$.
\begin{enumerate}
\item Compute the equidimensional decomposition $X=Z_0\cup\cdots\cup Z_n$,
where~$Z_m$ is either empty or $m$-equidimensional.
\item {\tt if} $Z_n\ne\emptyset$\, {\tt then output} ``Yes''.
\item {\tt for} $0\le m<m'<n$ {\tt do if} $Z_m\cap Z_{m'}\ne\emptyset$\, 
{\tt then output} ``No''.
\item Set $D:=d^n$. 
\item Compute a basis $f_1,\ldots,f_N$ of $I_{\le D}$, where $I=I(X)$.
\item {\tt for} $0\le m<n$ {\tt do}
\item \quad {\tt for} $0\le i\le n$ {\tt do}
\item \quad\quad Compute the matrix
 $A:=\left(\partder{f_\nu^i}{X_\mu}\right)_{\mu,\nu}$,
where $f_\nu^i$ is the dehomogenization of $f_\nu$ with respect to $X_i$.
\item \quad\quad Compute the Mulmuley polynomial $p(Z)$ of $A$
(see~\S\ref{ss:linAlg}), which lies in
$\C[X_0,\ldots,\hat{X_i},\ldots,X_n,T,Z]$.
Write $p(Z)=p_0+p_1 Z+\cdots+p_K Z^K$,
and let $F_1,\ldots,F_L\in\C[X_0,\ldots,\hat{X_i},\ldots,X_n]$ be the
coefficients of all $T^k$ in~$p_0,\ldots,p_m$.
\item \quad\quad {\tt if} $Z_m\cap\mZ(F_1,\ldots,F_L)\cap\{X_i\ne 0\}\ne\emptyset$\,
{\tt then output} ``No''.
\item {\tt output} ``Yes''.
\end{enumerate}

\begin{proposition}\label{prop:smoothnessTest}
The algorithm Smoothness Test is correct and can be implemented in parallel
time $(n\log d)^{\Oh(1)}$ and sequential time $d^{\Oh(n^4)}$.
\end{proposition}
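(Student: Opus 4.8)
The plan is to prove the two claims—correctness and the resource bound—separately. Correctness rests on three facts. First, a smooth variety is the disjoint union of its irreducible components, so if $X$ is smooth the equidimensional pieces $Z_0,\dots,Z_n$ computed in Step~1 are pairwise disjoint and Step~3 never outputs ``No''; moreover $Z_n\neq\emptyset$ forces $X=\proj^n$ (the only $n$-dimensional closed subvariety of $\proj^n$), which is smooth, so the early exit in Step~2 is sound. Second, by B\'ezout $D:=d^n\ge\deg X\ge\deg W$ for every irreducible component $W$ of $X$, so Proposition~\ref{prop:generation} shows each $W$ is scheme-theoretically cut out by hypersurfaces of degree $\le D$; when $X$ is smooth its components are disjoint, hence their dehomogenized ideals in a chart $U_i$ are pairwise comaximal, so $I(X)^i$ equals their product and is generated by forms of degree $\le\sum_W\deg W=\deg X\le D$. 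Therefore the basis $f_1,\dots,f_N$ of $I_{\le D}$ from Step~5 generates $I(X)^i$ in every chart, so $L_x^i=T_xX$ for all $x\in U_i$. Third, once Step~3 has been passed every $x\in Z_m$ has $\dim_xX=m$; together with $T_xX\subseteq L_x^i$ and $\dim T_xX\ge\dim_xX$ this yields that $x$ is smooth in $X$ iff $\dim L_x^i=m$, which is the Jacobian criterion~\eqref{eq:equivSmooth}. Combining, $X$ is smooth iff for every $m<n$, every chart $i$, and every $x\in Z_m\cap U_i$ one has $\rk A(x)=n-m$, equivalently $\rk A(x)\not\le n-m-1$, where $A$ is the Jacobian matrix of Step~8.

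It remains to express ``$\rk A(x)\le n-m-1$'' as a polynomial condition on $x$. I would apply Mulmuley's construction of \S\ref{ss:linAlg} pointwise: after the symmetrization $A\mapsto\bigl(\begin{smallmatrix}0&A\\ A^{T}&0\end{smallmatrix}\bigr)$, the order of vanishing at $Z=0$ of the Mulmuley polynomial $p(Z)$, evaluated at $x$, determines $\rk A(x)$, because evaluation $X\mapsto x$ commutes with forming the characteristic polynomial and the diagonal $T$-scaling separates the relevant subdeterminants. Hence ``$\rk A(x)$ drops below $n-m$'' is equivalent to the simultaneous vanishing at $x$ of the coefficient polynomials $F_1,\dots,F_L$ extracted from the low-order $Z$-coefficients of $p(Z)$ in Step~9. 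Routing through the Mulmuley polynomial rather than through the $(n-m)\times(n-m)$ minors of $A$ is what keeps the number of polynomials—and their degrees—polynomially bounded. Consequently, for each pair $(m,i)$ the existence of a singular point of $X$ on $Z_m\cap U_i$ is exactly the non-emptiness decided in Step~11, and the algorithm is correct.

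For the complexity I would track degrees through the pipeline. Sz\'ant\'o's algorithm and the constructions of~\cite{bus:09} yield the equidimensional decomposition and, via the linear system~\eqref{eq:prem}, a basis of $I_{\le D}$ with $D=d^n$; the regular chains involved have degrees $\d=d^{\Oh(n^2)}$~\cite{sza:99}, so Steps~1 and~5 run in parallel time $(n\log d)^{\Oh(1)}$ and, reducing to a linear system of size $(nD\d^e)^{\Oh(n)}=d^{\Oh(n^4)}$ (using $e\le n$), in sequential time $d^{\Oh(n^4)}$, and $N=\dim I_{\le D}=d^{\Oh(n^2)}$. The entries of $A$ have degree $\le D$, so its symmetrized Mulmuley polynomial is the characteristic polynomial of a matrix of size $d^{\Oh(n^2)}$ with entries of degree $\le D$; by the Berkowitz algorithm in the form of~\cite{sch:07a} it, and hence the $F_j$ (which have degree $d^{\Oh(n^2)}$), can be computed in parallel time $(n\log d)^{\Oh(1)}$ and sequential time $d^{\Oh(n^4)}$. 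Finally, Steps~3 and~11 are emptiness tests over $\C$ for systems of polynomials of degree $d^{\Oh(n^2)}$ in $\Oh(n)$ variables (adjoining the equations of the $Z_m$ and a Rabinowitsch variable for $\{X_i\neq0\}$), which by the effective Nullstellensatz reduce to linear systems of size $d^{\Oh(n^4)}$ solvable within the bounds of \S\ref{ss:linAlg}. The loops over $m$ and $i$ add only a polynomial-in-$n$ factor, so the whole algorithm meets the claimed bounds.

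I expect the main obstacle to be the correctness proof rather than the complexity estimate. On the commutative-algebra side one must check carefully that degree-$D$ scheme-theoretic generation of each smooth irreducible component really does assemble into degree-$D$ generation of $I(X)$, so that a smooth reducible $X$ cannot trigger a false ``No''. On the linear-algebra side one must verify that Mulmuley's rank characterization is stable under specialization of the entries of $A$ and then identify precisely which coefficients of $p(Z)$ to collect in Step~9, so that the rank-drop locus is exactly $\mZ(F_1,\dots,F_L)$. Once these points are settled, the complexity bound follows by mechanically composing the cited estimates.
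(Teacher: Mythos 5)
Your proposal is correct and follows essentially the same route as the paper: correctness via the Jacobian criterion, with Proposition~\ref{prop:generation} guaranteeing $L_x^i=T_xX$ when $X$ is smooth, plus Mulmuley's rank characterization to turn ``$\dim L_x^i>m$'' into a polynomial condition; complexity by composing the estimates for Szántó's decomposition, the linear system~\eqref{eq:prem}, the Berkowitz/Mulmuley computation, and the effective Nullstellensatz. You spell out two points the paper leaves implicit, and these are genuine value added: (i) the reduction of the reducible smooth case to the irreducible one (Proposition~\ref{prop:generation} is stated only for irreducible $X$; your observation that smooth components are disjoint, together with the product/comaximality argument, is exactly what is needed to see that the degree-$\le D$ part of $I(X)$ still cuts $X$ out scheme-theoretically in every chart), and (ii) the soundness of the early exit at Step~2. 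You are also right to flag, as a point needing care, exactly which $Z$-coefficients of the Mulmuley polynomial to collect: because of the symmetrization $A\mapsto\bigl(\begin{smallmatrix}0&A\\ A^{T}&0\end{smallmatrix}\bigr)$, the order of vanishing at $Z=0$ is $N-n+2\dim L_x^i$ rather than $\dim L_x^i$, so the threshold is not literally $m+1$; the paper's proof states ``$\dim L_x^i>m\iff Z^{m+1}\mid p_x(Z)$'' without comment, and your caution here is warranted (though it only shifts the index and does not affect the degree or size bounds). On the other hand, the paper handles the feasibility test of Step~10 directly by the affine effective Nullstellensatz in the chart $\{X_i\ne 0\}$, which is slightly cleaner than adjoining a Rabinowitsch variable, but this is immaterial. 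Overall the proposal is a faithful, somewhat more detailed version of the paper's argument with no substantive gap.
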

\begin{proof}
Correctness:
If $X$ is smooth, then it clearly passes the test in step 3.
By~\eqref{eq:equivSmooth} we have $\dim L_x^i=m$ for all $m,i$ and all $x\in Z_m$.
Denote by $A_x$ the matrix $A$ evaluated at $x$, and similarly for $p(Z)$.
Then $L_x^i=\ker A_x$, hence
$$
\dim L_x^i>m\quad\iff\quad Z^{m+1}\,|\, p_x(Z)\quad\iff\quad F_1(x)=\cdots=F_L(x)=0.
$$
If $X$ is not smooth, then it doesn't pass the test in step 3 or some $Z_m$ is
not smooth. In the latter case at some point $x\in Z_m\cap U_i$ we will have
$\dim L_x^i>m$.

Analysis: All the algorithms we use are well-parallelizable. We therefore state
only the sequential time bounds.
The equidimensional decomposition in step 1 can be done in time $d^{\Oh(n^2)}$
with the algorithm of~\cite{giu:91}. For each~$m$, this algorithm returns
$d^{\Oh(n^2)}$ polynomials of degree bounded by $\deg Z_m=\Oh(d^n)$ whose
zero set is $Z_m$. Testing feasibility of a system of $r$ homogeneous equations
of degree $\overline{d}$ can be done in time $r(n\overline{d})^{\Oh(n)}$ using
the effective homogeneous Nullstellensatz.
Hence step 3 takes time $d^{\Oh(n^2)}$.
Sz{\'a}nt{\'o}'s algorithm in step 5 runs in time $d^{\Oh(n^4)}$, and clearly
$N=\Oh(D^{n+1})=d^{\Oh(n^2)}$.
Furthermore, the computation of the Mulmuley polynomial in step 9 can be done
in time $d^{\Oh(n^3)}$ by~\S\ref{ss:linAlg}, and we have $L=\Oh(N^2m)=d^{\Oh(n^2)}$
and $\deg F_i\le ND=d^{\Oh(n^2)}$. Thus step 10 takes time  $d^{\Oh(n^4)}$
by the affine effective Nullstellensatz.
\end{proof}

\section{Finding Generic Hyperplanes}
The algorithm for computing the cohomology of $X$ described 
in~\S\ref{ss:compCohHypCompl} and~\S\ref{se:patching} depends on a choice
of sufficiently generic
hyperplane sections $H_\nu$ of~$X$. More precisely, it is
required that $V:=H_0\cup\cdots\cup H_m$ is a hypersurface with normal
crossings in $X$, where $m=\dim X$. Note that as a consequence
$H_{i_0}\cup\cdots\cup H_{i_q}$ has normal crossings for each
tuple $i_0<\cdots <i_q$.
Here we describe how to find sufficiently generic hyperplanes
deterministically in parallel polynomial time.

Throughout this section we assume $X$ to be smooth, and let us first
assume that $X$ is $m$-equidimensional.
We will formulate a sufficient condition for normal crossings in terms of
transversality.
Recall that a linear subspace $L\subseteq\proj^n$ is called
{\em transversal} to $X$ in $x\in X\cap L$, written
$X\pitchfork_x L$, iff
$\dim(T_x X\cap T_x L)=\dim T_x X+\dim T_x L-n$.
Now let the hypersurfaces $H_0,\ldots,H_m$ be given by the linear
forms $\ell_0,\ldots,\ell_m\in\C[X_0,\ldots,X_n]$. Denote
$L_{i_0\cdots i_q}:=\mZ(\ell_{i_0},\ldots,\ell_{i_q})$
for all $0\le q\le m$ and all $0\le i_0<\cdots<i_q\le m$.
\begin{lemma}\label{lemma:normCrossingCond}
If $\ell_0,\ldots,\ell_m$ are linearly independent and for all
$0\le q\le m$ and all $0\le i_0<\cdots<i_q\le m$ we have 
\begin{equation}\label{eq:normalCrossingCond}
\forall x\in X\cap L_{i_0\cdots i_q}\colon\ X\pitchfork_x L_{i_0\cdots i_q},
\end{equation}
then~$V\subseteq X$ is a hypersurface with normal crossings.
\end{lemma}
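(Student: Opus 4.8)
The plan is to verify, at every point $x\in V$, the two requirements of the definition of a normal crossings hypersurface in \S\ref{ss:divisors}: that $\dim_x V=\dim_x X-1$, and that the irreducible components of $V$ through $x$ admit local equations with linearly independent differentials in $(T_xX)^*$. Since $X$ is $m$-equidimensional, $\dim_x X=m$, so the first requirement reads $\dim_x V=m-1$, and it will fall out of the analysis of the components. Fix $x\in V$, choose an affine chart $\C^n\simeq\{X_j\ne 0\}$ containing $x$, and for a homogeneous $\ell$ write $\ell^j$ for its dehomogenization with respect to $X_j$. Put $S:=\{i:\ell_i(x)=0\}=\{i_0<\cdots<i_q\}$, so that $x\in X\cap L_{i_0\cdots i_q}$ and the hypothesis~\eqref{eq:normalCrossingCond} applies both to this tuple and to each one-element tuple $(i)$, $i\in S$.

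First I would identify the components of $V$ through $x$. Applying~\eqref{eq:normalCrossingCond} with $q=0$ gives $\dim(T_xX\cap T_x\mZ(\ell_i))=m-1$ for each $i\in S$, so $d_x\ell_i^j$ restricts to a nonzero functional on $T_xX$; since $\Oh_{X,x}$ is regular ($X$ being smooth), $\ell_i^j$ is then a regular parameter, so $H_i=\mZ_X(\ell_i)$ is smooth — in particular irreducible — near $x$, with local equation $\ell_i^j$ and $\dim_x H_i=m-1$. As $\dim_x V=\max\{\dim_x H_i:i\in S\}$, this settles the hypersurface condition. Moreover every global irreducible component of $V$ through $x$ lies in some $H_i$ (forcing $i\in S$) and, being maximal in $V$, agrees near $x$ with $H_i$; conversely each germ of $H_i$ ($i\in S$) at $x$ is such a component, and these $q+1$ components are pairwise distinct because their conormal directions $d_x\ell_{i_\nu}^j$ remain independent on $T_xX$ (shown next). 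Hence the components of $V$ through $x$ are exactly the germs of the $H_i$, $i\in S$, with local equations $\ell_{i_0}^j,\dots,\ell_{i_q}^j$.

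It then remains to show $d_x\ell_{i_0}^j|_{T_xX},\dots,d_x\ell_{i_q}^j|_{T_xX}$ are linearly independent, equivalently that $\varphi\colon T_xX\to\C^{q+1}$, $v\mapsto(d_x\ell_{i_\nu}^j(v))_\nu$, is surjective. Its kernel is $T_xX\cap\bigcap_\nu\ker d_x\ell_{i_\nu}^j=T_xX\cap T_xL_{i_0\cdots i_q}$. Here I would first observe that the covectors $d_x\ell_{i_\nu}^j$ are themselves independent in $(\C^n)^*$: a relation $\sum_\nu c_\nu d_x\ell_{i_\nu}^j=0$ forces the affine-linear function $\sum_\nu c_\nu\ell_{i_\nu}^j$ to equal the constant $\sum_\nu c_\nu\ell_{i_\nu}^j(x)=0$, hence $\sum_\nu c_\nu\ell_{i_\nu}=\lambda X_j$ for some $\lambda$; evaluating at $x$, where $X_j\ne 0$ but each $\ell_{i_\nu}$ vanishes, gives $\lambda=0$, so all $c_\nu=0$ by the assumed independence of $\ell_0,\dots,\ell_m$. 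Therefore $\dim T_xL_{i_0\cdots i_q}=n-(q+1)$, and~\eqref{eq:normalCrossingCond} yields $\dim\ker\varphi=m+(n-q-1)-n=m-q-1$; by rank--nullity $\dim\im\varphi=q+1$, so $\varphi$ is onto and $V$ has normal crossings at $x$.

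The one place that needs care is the second paragraph: matching the \emph{global} irreducible components of $V$ that appear in the definition with the \emph{local} branches of the individual hyperplane sections $H_i$ at $x$ (and noting $H_i\subsetneq X$, which itself follows from transversality). Everything else is the dimension count in the third paragraph, for which the hypotheses have been arranged to fit exactly; I do not expect a real obstruction. Finally, the stated assumption that $X$ is $m$-equidimensional is in force throughout this part of the section, so no separate reduction is needed here.
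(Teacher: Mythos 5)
Your proof is correct and follows essentially the same route as the paper's: use $q=0$ of~\eqref{eq:normalCrossingCond} to show each $H_i$ is smooth at $x$ with $\ell_i$ a local equation (hence $V$ is a hypersurface and the relevant components of $V$ through $x$ are exactly the $H_i$ with $\ell_i(x)=0$), then use the transversality hypothesis together with rank--nullity applied to $\varphi=(d_x\ell_{i_0},\ldots,d_x\ell_{i_q})\colon T_xX\to\C^{q+1}$ to conclude the differentials are independent on $T_xX$. You are slightly more explicit than the paper in two places -- matching global irreducible components with the local branches of the $H_i$, and verifying from linear independence of the $\ell_\nu$ that $\dim T_xL_{i_0\cdots i_q}=n-q-1$ in the affine chart -- but the underlying argument is the same.
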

\begin{proof}
Suppose that the condition~\eqref{eq:normalCrossingCond} holds. First note
by choosing $q=0$ that $L_i=\mZ(\ell_i)$ is transversal to $X$
at all~$x$, thus $V$ is indeed a hypersurface. Furthermore, $H_i=X\cap L_i$ is smooth in
$x$, so that~$x$ lies in only one irreducible component of $H_i$, and
$\ell_i\in\Oh_{X,x}$ is a local equation of that component. 
By transversality we have $\dim(T_x X\cap T_x L_{i_0\cdots i_q})=m-q-1$.
But $T_x X\cap T_x L_{i_0\cdots i_q}$ is the kernel of the linear map
$\var:=(d_x\ell_{i_0},\ldots, d_x\ell_{i_q})\colon T_x X\rightarrow\C^{q+1}$,
which thus must be surjective. Hence $d_x\ell_{i_0},\ldots, d_x\ell_{i_q}$
are linearly independent on $T_x X$, which proves the claim.
\end{proof}

In order to work with condition~\eqref{eq:normalCrossingCond} algorithmically,
we introduce some notation.
Set $I:=I(X)$ and $D:=\deg X$. Recall from~\S\ref{se:testSmoothness} that if 
$f_1,\ldots,f_N$ is a vector space basis of~$I_{\le D}$, then
$$
T_x X=\mZ(d_x f_1^i,\ldots,d_x f_N^i)\subseteq\C^n
$$
for all $x\in U_i=X\setminus \mZ(X_i)$ and $0\le i\le n$.
For each tuple $i_0<\cdots <i_q$ and each~$i$ we define the
matrix
\begin{equation}\label{eq:matrix}
A^i_{i_0\cdots i_q}:=
\left(\begin{array}{c}
d_x f_1^i \\
\vdots \\
d_x f_N^i \\
d_x\ell^i_{i_0} \\
\vdots \\
d_x\ell^i_{i_q} \\
\end{array}\right)\in\C[X_0,\ldots,\hat{X_i},\ldots,X_n]^{(N+q+1)\times n}.
\end{equation}
Then the kernel of $A^i_{i_0\cdots i_q}$ is the kernel of $\var$ of the proof
of Lemma~\ref{lemma:normCrossingCond}. 
Assume that $\ell_0,\ldots,\ell_m$ are linearly independent. Then
condition~\eqref{eq:normalCrossingCond} is eqivalent to the statement that
the nullity of $A^i_{i_0\cdots i_q}$ is $m-q-1$, its minimal possible value,
at each point $x\in U_i\cap L_{i_0\cdots i_q}$. 
Note that this condition also implies the linear independence.
Now let $p(Z)$ be the Mulmuley polynomial of $A^i_{i_0\cdots i_q}$, which lies
in $\C[X_0,\ldots,\hat{X_i},\ldots,X_n,T,Z]$.
Let $F_1,\ldots,F_L\in\C[X_0,\ldots,\hat{X_i},\ldots,X_n]$ be the
coefficients of all $T^k$ in the coefficient of $Z^{m-q}$ in $p(Z)$.
Then a sufficient condition for~\eqref{eq:normalCrossingCond} is
\begin{equation}\label{eq:normalCrossingFirstOrderCond}
\bigwedge_i U_i\cap L_{i_0\cdots i_q}\cap\mZ(F_1,\ldots,F_L)\ne\emptyset.
\end{equation}

Using this formula we can prove
\begin{proposition}\label{prop:chooseHyperplanes}
Given polynomials of degree $\le d$ defining a smooth subvariety
$X\subseteq\proj^n$ of dimension $m$, one can compute in
parallel time $(n\log d)^{\Oh(1)}$ and sequential time $d^{{\Oh(n^4)}}$ linear
forms
$\ell_0,\ldots,\ell_m$ such that $V=\bigcup_j H_j$ is a hypersurface with
normal crossings, where $H_j=\mZ_X(\ell_j)$.
\end{proposition}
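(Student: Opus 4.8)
The plan is to invoke Lemma~\ref{lemma:normCrossingCond}: it suffices to produce linearly independent linear forms $\ell_0,\ldots,\ell_m$ satisfying the transversality condition~\eqref{eq:normalCrossingCond} for every $0\le q\le m$ and every tuple $i_0<\cdots<i_q$. First I would reduce to an equidimensional situation. Since $X$ is smooth, its irreducible components coincide with its connected components and are pairwise disjoint; computing the equidimensional decomposition $X=Z_0\cup\cdots\cup Z_n$ as in the Algorithm Smoothness Test of \S\ref{se:testSmoothness}, a tuple of hyperplanes gives a normal crossings divisor on $X$ iff it does so on each nonempty $Z_{m'}$, because normal crossings is a local property. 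So it is enough to choose $\ell_0,\ldots,\ell_m$ that are, in the sense of~\eqref{eq:normalCrossingCond}, simultaneously transversal to every $Z_{m'}$ along every $L_{i_0\cdots i_q}$ cut out by a subset of them (for $q\ge m'$ this just demands $Z_{m'}\cap L_{i_0\cdots i_q}=\emptyset$; note that the instances with $q=m$ force $H_0\cap\cdots\cap H_m=\emptyset$, which is what the patching of \S\ref{se:patching} needs). For each $Z_{m'}$ one computes a basis $f_1,\ldots,f_N$ of $I(Z_{m'})_{\le D}$ with $D=d^n$ as in \S\ref{se:testSmoothness}, so that $T_xZ_{m'}=\mZ(d_xf_1^i,\ldots,d_xf_N^i)$ for $x\in Z_{m'}\setminus\mZ(X_i)$.

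\textbf{Genericity and degree bounds.} The next ingredient is that the ``bad'' tuples form a proper subvariety of bounded degree. Fix $Z_{m'}$, $q$, and indices. By the classical generic transversality theorem (Kleiman) in characteristic zero, a generic linear subspace of codimension $q+1$ is transversal to the smooth variety $Z_{m'}$ everywhere along the intersection, and a generic such subspace arises as $L_{i_0\cdots i_q}$ for generic coefficients of $\ell_{i_0},\ldots,\ell_{i_q}$; since there are only finitely many triples $(Z_{m'},q,(i_0,\ldots,i_q))$, and each of them --- together with linear independence of the $\ell_j$ --- is a nonempty Zariski-open condition on the coefficient space $\A^{(n+1)(m+1)}$ of $(\ell_0,\ldots,\ell_m)$, their common locus $\Omega$ is nonempty Zariski-open, with proper closed complement $W$. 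Using the construction preceding the proposition, $W$ is cut out --- after eliminating the point $x$ from the systems in~\eqref{eq:normalCrossingFirstOrderCond} --- by finitely many polynomials in the coefficients $(\ell_{jk})$: the relevant coefficients $F_1,\ldots,F_L$ of the Mulmuley polynomial of the matrix~\eqref{eq:matrix} have degree $d^{\Oh(n^2)}$ in $(x,T)$ and bounded degree in the $\ell_{jk}$, and eliminating $x$ multiplies degrees by a single-exponential factor, so these defining polynomials have degree at most some $\delta=d^{n^{\Oh(1)}}$.

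\textbf{Deterministic search.} Now fix the one-parameter monomial family $\ell_j(t):=\sum_{k=0}^n t^{(\delta+1)^{j(n+1)+k}}X_k$. By a Kronecker-substitution argument, the restriction to this curve of any nonzero polynomial of degree $\le\delta$ in the $\ell_{jk}$ is a nonzero univariate polynomial of degree at most $\delta(\delta+1)^{(n+1)(m+1)}$; hence the set $B$ of integers $t$ with $(\ell_0(t),\ldots,\ell_m(t))\in W$ has at most $M=d^{n^{\Oh(1)}}$ elements, single exponential in $n\log d$. The algorithm tests, for each $t\in\{0,1,\ldots,M\}$ in parallel, whether $\ell_0(t),\ldots,\ell_m(t)$ are linearly independent (a determinant check) and whether, for every $Z_{m'}$, every tuple, and every $i$, the system ``$x\in Z_{m'}\cap L_{i_0\cdots i_q}\cap\{X_i\ne0\}$ and $F_1(x)=\cdots=F_L(x)=0$'' is infeasible --- exactly the kind of feasibility test done in step~10 of the Algorithm Smoothness Test, via the effective Nullstellensatz and the Mulmuley polynomial of \S\ref{ss:linAlg}. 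Since $\{0,\ldots,M\}\not\subseteq B$, some $t_0$ passes, and then $\ell_0(t_0),\ldots,\ell_m(t_0)$ satisfy~\eqref{eq:normalCrossingCond}, hence define a hypersurface with normal crossings by Lemma~\ref{lemma:normCrossingCond}; the algorithm outputs them. The exponents have bit size $(n\log d)^{\Oh(1)}$, so the monomials $t_0^{(\delta+1)^{j(n+1)+k}}$ are computable by repeated squaring within the stated parallel time; introducing the hyperplanes one at a time (so that only $n+1$ coefficients are free at each of the $m+1$ stages) keeps the number of candidate parameters, and hence the total work, within the claimed parallel time $(n\log d)^{\Oh(1)}$ and sequential time $d^{\Oh(n^4)}$.

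\textbf{Main obstacle.} The crux is the second step: establishing that $W$ is both proper and of single-exponential defining degree. Properness is the Bertini/Kleiman transversality statement, classical but requiring a uniform application across all the finitely many intersections $Z_{m'}\cap L_{i_0\cdots i_q}$ (and, for $q\ge m'$, the genericity of emptiness); the degree estimate requires tracking degrees through the formation of the Mulmuley polynomial and through the elimination of $x$, and it is precisely this bookkeeping --- together with the staged introduction of the hyperplanes --- that pins the exponent in $M$ down to the value needed for the $d^{\Oh(n^4)}$ bound. Everything else reuses the machinery already set up in \S\ref{se:testSmoothness} and \S\ref{ss:linAlg}.
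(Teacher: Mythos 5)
Your overall strategy is the same as the paper's: reduce to the equidimensional pieces $Z_{m'}$, reformulate normal crossings via Lemma~\ref{lemma:normCrossingCond} and the transversality condition~\eqref{eq:normalCrossingCond}, eliminate the quantified point $x$ in \eqref{eq:normalCrossingFirstOrderCond} to get polynomial constraints in the hyperplane coefficients, and then search deterministically over a one-parameter curve. The genericity step (Kleiman transversality; the paper simply says $\Phi(\a)$ holds for generic $\a$) and the equidimensional reduction are both fine.

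The real gap is in the search step. The paper finds the $\ell_j$ one at a time, eliminates the quantifier over $x$ via~\cite{fgm:90} to produce a quantifier-free formula $\Phi$ whose constituent polynomials $G_1,\ldots,G_M$ have degree $\delta=d^{\Oh(n^4)}$ in the $n+1$ coefficients $\a$ of $\ell_j$, and then substitutes $\a_k=b^k$ so that $P_\nu(b)=G_\nu(1,b,\ldots,b^n)$ is univariate of degree $\le n\delta=d^{\Oh(n^4)}$; a candidate set of size $>Mn\delta=d^{\Oh(n^4)}$ then suffices. Your Kronecker substitution $\a_k=t^{(\delta+1)^k}$ multiplies the degree by $(\delta+1)^{\Oh(n)}$ rather than by $n$: even staged (only $n+1$ coefficients at a time) the resulting univariate degree is $\delta(\delta+1)^{n}=d^{\Oh(n^5)}$, so the candidate set $\{0,\ldots,M\}$ has size $d^{\Oh(n^5)}$, exceeding the claimed sequential bound $d^{\Oh(n^4)}$. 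The all-at-once family you actually write, with exponents $(\delta+1)^{j(n+1)+k}$ over all $(n+1)(m+1)$ coefficients, is worse still ($d^{\Oh(n^6)}$), and your closing remark about ``introducing the hyperplanes one at a time'' contradicts that parametrization without repairing the exponent. Kronecker does give you one thing the paper has to argue for separately: it unconditionally preserves nonzeroness of each $G_\nu$, whereas the Vandermonde substitution $\a_k=b^k$ can annihilate a nonzero polynomial, and the paper therefore leans on the genericity argument from~\cite{bus:10}. But the degree price is too high for the stated bound. To fix the proof you should either replace the Kronecker curve by the Vandermonde curve $\ell_j=X_0+bX_1+\cdots+b^nX_n$ and invoke that genericity argument, or accept a larger exponent in the sequential running time than the proposition asserts.
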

\begin{proof}
First we set $D:=d^n$ and compute a basis $f_1,\ldots,f_N$ of $I_{D}$. Then
we compute the equidimensional components $Z_m$ of $X$.
We find the linear forms $\ell_0,\ldots,\ell_m$ successively, one at a time.
So assume that $\ell_0,\ldots,\ell_{j-1}$ have been already found, and take
$\ell_j=\a_0X_0+\cdots +\a_n X_n$ with indeterminate coefficients
$\a=(\a_0,\ldots,\a_n)$.
Now consider the conjunction of the
conditions~\eqref{eq:normalCrossingFirstOrderCond} for all $m\le\dim X$ and
$i_0<\cdots<i_q=j$, which is a first order formula with free variables~$\a$.
Note that here one has to take $U_i=Z_m\cap\{X_i\ne 0\}$. 
By quantifier elimination compute an equivalent quantifier-free formula
$\Phi(\a)$ in disjunctive normal form. Let $G_1,\ldots,G_M$ be all polynomials accuring $\Phi(\a)$. Since $\Phi(\a)$ is satisfied for generic $\a$, it is easy
to see that $G_{\nu}(\a)\ne 0$ for all~$\nu$
implies~$\Phi(\a)$~\cite[proof of Theorem 3.8]{bus:10}.
Let $\d$ be the maximal degree of the~$G_\nu$.
Now take a set $S\subseteq\C$ of cardinality $>Mn\d$ and test for all
$b\in S$ in parallel, whether $P_\nu(b):=G_{\nu}(1,b,\ldots,b^n)\ne 0$ for all $1\le\nu\le M$.
Since $P_\nu$ is a univariate polynomial of degree $\le n\d$, there must exist
a successful $b$. Then we can take $\ell_j=X_0+bX_1+\ldots+b^nX_n$.

Analysis: The computation of $f_1,\ldots,f_N$, of the equidimensional
decomposition, and of the Mulmuley polynomial can be done within the claimed
time bounds. 
Recall that $N,L$, the degrees of the defining equations for $Z_m$, as well as
$\deg F_i$ are of order $d^{\Oh(n^2)}$.
Condition~\eqref{eq:normalCrossingFirstOrderCond} is a universal
first order formula with $\Oh(n)$ free and bounded variables,
and $d^{\Oh(n^2)}$ atomic formulas involving polynomials of degree
$d^{\Oh(n^2)}$. According to~\cite{fgm:90}, one can eliminate the
universal quantifier and hence compute the polynomials $G_\nu$ in parallel
time $(n\log d)^{\Oh(1)}$ and sequential time $d^{{\Oh(n^4)}}$. Furthermore,
$M$ and $\d$ are also bounded by $d^{\Oh(n^4)}$. Hence the cardinality of
the set $S$ is $d^{\Oh(n^4)}$ and our claim follows.
\end{proof}

Theorem \ref{thm:main} follows from the Propositions \ref{prop:smoothnessTest},
\ref{prop:chooseHyperplanes}, and \ref{prop:cohomology}.


\end{document}